\theoremstyle{plain}
\newtheorem{thm}{Theorem}[section]
\newtheorem{lem}[thm]{Lemma}
\newtheorem{cor}[thm]{Corollary}
\newtheorem{prop}[thm]{Proposition}
\newtheorem{conj}[thm]{Conjecture}
\newtheorem{ques}[thm]{Question}
\newtheorem{prob}[thm]{Problem}
\theoremstyle{definition}
\newtheorem{rem}[thm]{Remark}
\DeclareMathOperator{\Sol}{Sol}
\DeclareMathOperator{\nbd}{nbd}
\begin{document}

\title[Non-solvable graphs of  groups]{Non-solvable graphs of  groups}

\author[P. Bhowal, R. K. Nath  and  D. Nongsiang]{P. Bhowal, R. K. Nath  and  D. Nongsiang}
\address{Parthajit Bhowal, Department of Mathematical Science, Tezpur  University, Napaam-784028, Sonitpur, Assam, India.}

\email{bhowal.parthajit8@gmail.com}

%\author{Deiborlang Nongsiang}

\address{Rajat Kanti Nath,  Department of Mathematical Science, Tezpur  University, Napaam-784028, Sonitpur, Assam, India.}

\email{rajatkantinath@yahoo.com}

\address{Deiborlang Nongsiang, Department of Mathematics, Union Christian College, Umiam-793122, Meghalaya, India.}

\email{ndeiborlang@yahoo.in}

\begin{abstract} 
Let $G$ be a group and $\Sol(G)=\{x \in G : \langle x,y \rangle \text{ is solvable for all } y \in G\}$. We associate a graph $\mathcal{NS}_G$ (called the non-solvable graph of $G$) with $G$ whose vertex set is  $G \setminus \Sol(G)$ and two distinct vertices are adjacent if they generate a non-solvable subgroup. In this paper we study many properties of $\mathcal{NS}_G$. In particular, we obtain results on vertex degree, cardinality of vertex degree set, graph realization, 
domination number, vertex connectivity, independence number and clique number of $\mathcal{NS}_G$. We also  consider two groups $G$ and $H$ having isomorphic non-solvable graphs and derive some properties of $G$ and $H$.  Finally, we conclude this paper by showing that $\mathcal{NS}_G$ is  neither planar, toroidal, double-toroidal, triple-toroidal  nor projective. 

% We conjecture that the domination number of the non-solvable graph of every finite non-abelian simple group is 4. We also conjecture that if $G$ and $H$ are two non-solvable finite groups such that $\mathcal{S}_G\cong \mathcal{S}_H$, then $|G| = |H|$.
  
\end{abstract}

%\thanks{*Corresponding author}
\subjclass[2010]{Primary 20D60; Secondary 05C25}
\keywords{non-Solvable graph; Finite group}

\maketitle

\section{Introduction} \label{S:intro} 

Let $G$ be a group (finite/infinite) and $\Sol(G)=\{x \in G : \langle x,y \rangle \text{ is solvable for all } y \in G\}$. Note that $\Sol(G)= \underset{x \in G}{\cap} \Sol_G(x)$ where $\Sol_G(x) = \{g \in G : \langle g,x \rangle \text{ is solvable}\}$ is called solvabilizer of $x$ in $G$. In general, $\Sol_G(x)$ is not a subgroup of $G$. However, it was shown,  in \cite{gkps}, that $\Sol(G)$ is the solvable radical of $G$ if $G$ is a finite group. That is, if $G$ is a finite group then $\Sol(G)$ is the unique largest solvable normal subgroup of $G$.
  We associate a simple graph $\mathcal{NS}_G$ with $G$, called the  non-solvable graph of $G$ whose vertex set is   $G \setminus \Sol(G)$ and two distinct vertices  $x$ and $y$ are adjacent if and only if $\langle x,y \rangle$ is not solvable. The non-solvable graph of $G$ was introduced in \cite{hr} and studied in \cite{hr, akbari}. The complement of $\mathcal{NS}_G$, known as solvable graph of $G$, is considered in \cite{bnn19} recently. The concept of non-solvable graph of a group is an extension of non-nilpotent graph and hence non-commuting graph of groups. The non-nilpotent and non-commuting graph of finite groups are studied extensively in \cite{Ab10,ns2017} and \cite{Ab06, Af15,Dar14,ddn18,talebi} respectively.
% It is not known whether the subset $\Sol(G)$ is a subgroup of $G$. However in many important cases it is a subgroup. In particular, $\Sol(G)$ is equal to the solvable radical $S(G)$ of $G$, whenever $G$ is finite (see \cite{gkps}). Recently, in \cite{hr}, some group and graph theoretic properties of the non-solvable graph associated to a group are studied.

We write $V(\Gamma)$ to denote the vertex set of a graph $\Gamma$. The degree of a vertex $x \in V(\Gamma)$ denoted by $\deg(x)$ is defined to be the number of vertices adjacent to $x$ and $\deg(\Gamma) = \{\deg(x) : x \in V(\Gamma)\}$ is the vertex degree set of $\Gamma$. In Section 2, we shall study some properties of  degree of a vertex and vertex degree set of $\mathcal{NS}_G$. In this section, we also obtain some bounds for solvability degree of a finite group and one of which is better than an existing bound, in particular \cite[Theorem 4.3]{bnn19}.
The solvability degree of a finite group $G$ is the probability that a randomly chosen pair of elements of $G$ generate a solvable group (see \cite{gW2000,bnn19,wIL2008}). 

 It is known that $\mathcal{NS}_G$ is neither a tree nor a complete graph (see \cite{hr, akbari}). In Section 3, we shall show that $\mathcal{NS}_G$ is not bipartite, more generally it is not complete multi-partite. We shall also show that $\mathcal{NS}_G$ is hamiltonian for some classes of finite groups. In Sections 4-6, we shall obtain several results regarding domination number, vertex connectivity, independence number and clique number of $\mathcal{NS}_G$. In section 7, we shall consider two groups $G$ and $H$ having isomorphic non-solvable graphs and derive some properties of $G$ and $H$. It was shown in \cite{hr} that $\mathcal{NS}_G$ is not planar. In the last section, we shall 
 show that the genus of  $\mathcal{NS}_G$ is greater or equal to $4$. Hence, $\mathcal{NS}_G$ is neither planar, toroidal, double-toroidal nor triple-toroidal. We conclude this paper by showing that $\mathcal{NS}_G$ is not projective.

Let $U$ be a nonempty subset of the vertex set of a graph $\Gamma$. The \textit{induced subgraph} of $\Gamma$ on $U$  is defined to be the graph $\Gamma [U]$  in which the vertex set  is $U$ and  the edge set consists precisely  those edges in $\Gamma$ whose endpoints lie in $U$.  For any non-empty subset $S$ of $V(\Gamma)$, we also write $\Gamma \setminus S$ to denote $\Gamma [V(\Gamma)\setminus S]$.

\section{Vertex degree and Cardinality of vertex degree set}
The {\it neighborhood} of a vertex $x$ in a graph $\Gamma$, denoted by $\nbd(x)$, is defined to be the set of all vertices adjacent to $x$ 
%while the {\it degree} of $x$ in $\Gamma$, denoted by $\deg(x)$, is defined to be the number of vertices adjacent to $x$, 
and so $\deg(x) =|\nbd(x)|$. It is easy to see that $\deg(x) = |G| - |\Sol_G(x)|$ for any vertex $x$ in the non-solvable graph ${\mathcal{NS}}_G$ of the group $G$.    
%Let $\deg(x)$ denote the degree of the vertex $x \in G \setminus \Sol(G)$ 
%For  the graph ${\mathcal{S}}_G$  w
%We write $\deg(G) := \{\deg(x) : x \in G \setminus \Sol(G)\}$. 
%  In \cite{hr}, Hai-Reuven have shown that $6 \leq \deg(x) \leq |G| - |\Sol(G)| - 2$ for any $x \in G \setminus \Sol(G)$.
 In \cite{hr}, Hai-Reuven have shown that
\begin{equation}\label{1}
6 \leq \deg(x) \leq |G| - |\Sol(G)| - 2
\end{equation}
for any $x \in G \setminus \Sol(G)$. In this section, we first obtain some bounds for $P_s(G)$ using \eqref{1}. Recall that $P_s(G)$ is the solvability degree  of a finite group $G$ defined by the  ratio
\[ 
P_s(G) = \frac{|\{(u, v) \in G \times G : \langle u, v\rangle \mbox{ is solvable}\}|}{|G|^2}.
\]
It is worth mentioning that several properties of $P_s(G)$ including some bounds are studied in \cite{gW2000,bnn19,wIL2008}. The following result gives a connection between $P_s(G)$ and the number of edges in $\mathcal{NS}_G$.

\begin{lem}\label{degs}
If $G$ is a finite non-solvable group  then
\[ 
\sum_{x\in G\setminus \Sol(G)} \deg(x)= |G|^2(1-P_S(G)).
\]
\end{lem}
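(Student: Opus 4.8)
The plan is to read both sides of the identity as counts of ordered pairs of elements whose generated subgroup fails to be solvable. The right-hand side is the more transparent one: since $|G|^2$ is the total number of ordered pairs $(u,v) \in G \times G$ and $|G|^2 P_s(G)$ is, by definition of $P_s(G)$, the number of such pairs with $\langle u,v\rangle$ solvable, the quantity $|G|^2\bigl(1 - P_s(G)\bigr)$ counts exactly those ordered pairs $(u,v)$ for which $\langle u,v\rangle$ is \emph{not} solvable. My goal is therefore to show that the degree sum on the left equals this same number.

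For the left-hand side I would start from the formula $\deg(x) = |G| - |\Sol_G(x)|$ recorded at the opening of this section. Since $\Sol_G(x) = \{y \in G : \langle x,y\rangle \text{ is solvable}\}$, the difference $|G| - |\Sol_G(x)|$ is precisely the number of $y \in G$ for which $\langle x,y\rangle$ is non-solvable. Summing over all vertices $x \in G \setminus \Sol(G)$ then yields the number of ordered pairs $(x,y)$ with $x \in G \setminus \Sol(G)$, $y \in G$, and $\langle x,y\rangle$ non-solvable.

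The only point requiring a word of justification is that the restriction $x \in G \setminus \Sol(G)$ may be dropped without altering the count. Indeed, if $x \in \Sol(G)$ then $\langle x,y\rangle$ is solvable for every $y \in G$ by the definition of $\Sol(G)$, so no pair whose first coordinate lies in $\Sol(G)$ can contribute a non-solvable subgroup. Hence the sum equals the number of \emph{all} ordered pairs $(x,y) \in G \times G$ with $\langle x,y\rangle$ non-solvable, which is exactly the right-hand side. (As a consistency check, each unordered non-solvable pair contributes to two vertex degrees, matching the fact that both orders $(x,y)$ and $(y,x)$ are counted among the ordered pairs.) Because every step amounts to unwinding the definitions together with this single containment observation, I do not expect any genuine obstacle; the one thing to state cleanly is precisely the remark that elements of $\Sol(G)$ never appear as a coordinate of a non-solvable pair, which removes the vertex-set restriction.
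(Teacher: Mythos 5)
Your proof is correct and follows essentially the same double-counting argument as the paper: both identify $|G|^2\bigl(1-P_S(G)\bigr)$ with the number of ordered pairs generating a non-solvable subgroup, and equate that count with the degree sum (the paper routes this last step through ``twice the number of edges,'' you route it through $\deg(x)=|G|-|\Sol_G(x)|$ plus the observation that elements of $\Sol(G)$ contribute nothing, which amounts to the same bookkeeping). No gaps; nothing further is needed.
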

\begin{proof}
Let $U=\{(x,y)\in G\times G : \langle x,y \rangle \mbox{ is not solvable}\}$. Then
\begin{equation*}
|U|= |G\times G|- |\{(x,y)\in G\times G : \langle x,y \rangle \mbox{ is solvable}\}|= |G|^2-P_S(G)|G|^2.
\end{equation*}
Note that
\[
|U| = 2\times \mbox{number of edges in }\mathcal{NS}_G=\sum_{x\in G\setminus \Sol(G)} \deg(x).
\]
Hence the result follows.
\end{proof}

Now we obtain the following bounds for $P_S(G)$.
\begin{thm}\label{new-bounds}
If $G$ is a finite non-solvable group then
\[
\frac{2(|G|-|\Sol(G)|)}{|G|^2} + \frac{2|\Sol(G)|}{|G|} - \frac{|\Sol(G)|^2}{|G|^2} \leq P_S(G) \leq 1 - \frac{6(|G|-|\Sol(G)|)}{|G|^2}.
\]
\end{thm}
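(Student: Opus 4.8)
The plan is to combine the elementwise degree bounds recorded in \eqref{1} with the degree-sum identity of Lemma \ref{degs}. First I would note that the vertex set $G \setminus \Sol(G)$ of $\mathcal{NS}_G$ has exactly $|G| - |\Sol(G)|$ elements, and that since $G$ is non-solvable this set is nonempty so that \eqref{1} is applicable to each vertex. Summing the bounds $6 \le \deg(x) \le |G| - |\Sol(G)| - 2$ over all vertices $x \in G \setminus \Sol(G)$ then sandwiches the total degree:
\[
6(|G| - |\Sol(G)|) \le \sum_{x \in G \setminus \Sol(G)} \deg(x) \le (|G| - |\Sol(G)|)(|G| - |\Sol(G)| - 2).
\]

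Next I would substitute the identity $\sum_{x} \deg(x) = |G|^2(1 - P_S(G))$ from Lemma \ref{degs} into this chain, obtaining a two-sided bound on $|G|^2(1 - P_S(G))$, and then divide by $|G|^2$ and isolate $P_S(G)$. The left-hand inequality gives at once $P_S(G) \le 1 - \frac{6(|G| - |\Sol(G)|)}{|G|^2}$, which is precisely the stated upper bound. The right-hand inequality yields
\[
P_S(G) \ge 1 - \frac{(|G| - |\Sol(G)|)(|G| - |\Sol(G)| - 2)}{|G|^2},
\]
and it remains to rewrite the right-hand side in the claimed form.

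For this last step I would write $n = |G|$ and $s = |\Sol(G)|$ for brevity, expand $(n-s)(n-s-2) = (n-s)^2 - 2(n-s)$, and use $(n-s)^2/n^2 = 1 - \frac{2s}{n} + \frac{s^2}{n^2}$; after the constant terms cancel, the bound collapses to $\frac{2(n-s)}{n^2} + \frac{2s}{n} - \frac{s^2}{n^2}$, matching the lower bound in the statement term-by-term. The argument is essentially elementary, so there is no genuine obstacle; the only point requiring care is this final simplification, where one must carry out the expansion of both the product and the square correctly so that the expression lands exactly in the stated form.
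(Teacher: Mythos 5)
Your proposal is correct and follows exactly the paper's own argument: sum the degree bounds of \eqref{1} over the $|G|-|\Sol(G)|$ vertices, substitute the identity of Lemma \ref{degs}, and simplify; your expansion of $(n-s)(n-s-2)$ correctly recovers the stated lower bound. The paper compresses the algebra into the phrase ``the result follows on simplification,'' so your write-up is simply a more explicit version of the same proof.
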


\begin{proof}
By Lemma \ref{degs} and \eqref{1}, we have 
\[
6(|G|-|\Sol(G)|)\leq |G|^2(1-P_S(G)) \leq (|G|-|\Sol(G)|)(|G|-|\Sol(G)|-2)
\]
and hence the result follows on simplification.
%  and $(|G|-|Sol(G)|)(|G|-|Sol(G)|-2) \geq |G|^2(1-P_S(G))$.
%which gives the upper bound of $P_S(G)$.
% which gives the lower bound.
\end{proof}

\noindent It was shown in \cite[Theorem 4.3]{bnn19}   that 
\begin{equation}\label{2}
P_S(G) \geq \frac{2(|G|-|\Sol(G)|)}{|G|^2} + \frac{|\Sol(G)|}{|G|}.
\end{equation}
Note that  $\frac{|\Sol(G)|}{|G|} - \frac{|\Sol(G)|^2}{|G|^2} > 0$ for any finite non-solvable group $G$. Hence, the lower bound obtained in Theorem \ref{new-bounds} for $P_S(G)$ is better  than \eqref{2}.

   It was also shown, in \cite{hr}, that  $|\deg(\mathcal{NS}_G)| \ne 2$, where $\deg(\mathcal{NS}_G)$ is the vertex degree set of $\mathcal{NS}_G$. However, we observe that the cardinality of  $\deg(\mathcal{NS}_G)$ may be equal to $3$. In this section, we shall obtain a class of groups $G$ such that  $|\deg(\mathcal{NS}_G)| = 3$. Note that  $\deg(\mathcal{NS}_{A_5}) = \{24, 36, 50\}$.  More generally, we have the following result.  

\begin{prop}\label{degree-lem-1} 
Let $S$ be any finite solvable group. Then  $|\deg(\mathcal{NS}_{A_5\times S})| = 3$.
%\[
%|\{\deg(x)\mid x\in V(\mathcal{S}_{A_5\times S})\}| = 3.
%\]
\end{prop}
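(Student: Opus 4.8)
The plan is to reduce everything to the non-solvable graph of $A_5$, exploiting the fact that the solvability of a two-generated subgroup of $A_5\times S$ is detected entirely by its image in $A_5$. First I would pin down $\Sol(G)$. Since $\Sol(\cdot)$ is the solvable radical for finite groups and the solvable radical of a direct product is the product of the solvable radicals, one gets $\Sol(A_5\times S)=\Sol(A_5)\times\Sol(S)=1\times S$, using that $A_5$ is simple and non-solvable (so $\Sol(A_5)=1$) and that $S$ is solvable (so $\Sol(S)=S$). If one prefers a direct argument: $1\times S$ is a solvable normal subgroup, while any solvable normal subgroup of $A_5\times S$ projects onto a solvable normal subgroup of $A_5$, which must be trivial. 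Hence $|\Sol(G)|=|S|$, $|G|=60|S|$, and the vertices of $\mathcal{NS}_G$ are exactly the pairs $(a,s)$ with $a\neq e$.

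The key step is the following reduction. For vertices $x=(a,s)$ and $y=(b,t)$, the subgroup $\langle x,y\rangle$ lies inside $\langle a,b\rangle\times\langle s,t\rangle$ and projects onto $\langle a,b\rangle$ and onto $\langle s,t\rangle$. A subgroup of a direct product is solvable if and only if both of its projections are solvable (forward: quotients of a solvable group are solvable; backward: a subgroup of a product of solvable groups is solvable). Since $\langle s,t\rangle\le S$ is automatically solvable, $\langle x,y\rangle$ is solvable if and only if $\langle a,b\rangle$ is solvable in $A_5$. Consequently $\Sol_G((a,s))=\Sol_{A_5}(a)\times S$, so $|\Sol_G((a,s))|=|S|\,|\Sol_{A_5}(a)|$.

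Combining this with the degree formula $\deg(x)=|G|-|\Sol_G(x)|$ recorded at the start of this section gives
\[
\deg((a,s))=60|S|-|S|\,|\Sol_{A_5}(a)|=|S|\bigl(60-|\Sol_{A_5}(a)|\bigr)=|S|\cdot\deg_{A_5}(a),
\]
where $\deg_{A_5}(a)$ is the degree of $a$ in $\mathcal{NS}_{A_5}$. Thus every vertex degree of $\mathcal{NS}_G$ is $|S|$ times a vertex degree of $\mathcal{NS}_{A_5}$, and each such value is attained (pick any $s\in S$ alongside an $a$ realizing the given $A_5$-degree). Since $\deg(\mathcal{NS}_{A_5})=\{24,36,50\}$, it follows that $\deg(\mathcal{NS}_G)=\{24|S|,36|S|,50|S|\}$, a set of exactly three distinct elements, which is the claim.

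The main obstacle, and the point to state carefully, is the reduction in the second paragraph: the clean justification that the $A_5$-projection governs solvability, together with the correct computation of $\Sol(A_5\times S)$. Once these are in hand, the rest is bookkeeping with the degree formula and the stated fact $\deg(\mathcal{NS}_{A_5})=\{24,36,50\}$, so I would keep the write-up short and lean on those two inputs.
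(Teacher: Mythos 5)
Your proof is correct and follows essentially the same route as the paper: the paper isolates your key step as a lemma (stated for any finite non-solvable $G$ in place of $A_5$), showing that $\langle (x,s),(y,t)\rangle$ is solvable iff $\langle x,y\rangle$ is, hence $\Sol_{G\times S}((x,s)) = \Sol_G(x)\times S$ and $\deg((x,s)) = |S|\deg(x)$, and then invokes $|\deg(\mathcal{NS}_{A_5})| = 3$. Your write-up is if anything slightly more careful, since you also make explicit the computation $\Sol(A_5\times S) = 1\times S$, which the paper leaves implicit.
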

The proof of Proposition \ref{degree-lem-1} follows from the fact that 
% $\{\deg(x)\mid x\in V(A_5)\} =\{24,36,50\}$
$|\deg(\mathcal{NS}_{A_5})| = 3$  and  the result given below.
 
\begin{lem}
 Let $G$ be a finite non-solvable group and $S$ be any finite solvable group. Then  
 %$|\{\deg(x)\mid x\in V(\mathcal{S}_G)\}| = |\{\deg(x)\mid x\in V(\mathcal{S}_{G\times S})\}|$
$|\deg(\mathcal{NS}_G)| =  |\deg(\mathcal{NS}_{G \times S})|$. 
\end{lem}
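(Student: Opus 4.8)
The plan is to compute the degree of every vertex of $\mathcal{NS}_{G\times S}$ explicitly in terms of the degrees of the vertices of $\mathcal{NS}_G$, and then to observe that the two vertex degree sets differ only by a fixed nonzero scaling factor, which cannot change cardinality.

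First I would pin down the vertex set of $\mathcal{NS}_{G\times S}$. Since $\Sol(\cdot)$ is the solvable radical of a finite group and the solvable radical of a direct product is the product of the radicals, we have $\Sol(G\times S)=\Sol(G)\times\Sol(S)=\Sol(G)\times S$, the last equality because $S$ is solvable. (The inclusion $\Sol(G)\times S\subseteq\Sol(G\times S)$ is clear; for the reverse, any normal solvable $N\trianglelefteq G\times S$ has projections $\pi_G(N)\trianglelefteq G$ and $\pi_S(N)\trianglelefteq S$ solvable, so $N\subseteq\pi_G(N)\times\pi_S(N)\subseteq\Sol(G)\times S$.) Consequently $(g,s)\in(G\times S)\setminus\Sol(G\times S)$ precisely when $g\in G\setminus\Sol(G)$; that is, the vertices of $\mathcal{NS}_{G\times S}$ are exactly the pairs $(g,s)$ with $g$ a vertex of $\mathcal{NS}_G$ and $s\in S$ arbitrary.

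The key step is to evaluate the solvabilizer $\Sol_{G\times S}((g,s))$. Here I would invoke the elementary fact that a subgroup $K\leq G\times S$ is solvable if and only if both projections $\pi_G(K)$ and $\pi_S(K)$ are solvable: the forward direction holds because projections are homomorphic images, and the converse because $K$ embeds in $\pi_G(K)\times\pi_S(K)$, a product of solvable groups. Applying this to $K=\langle(g,s),(h,t)\rangle$, whose projections are $\langle g,h\rangle$ and $\langle s,t\rangle$, and using that $\langle s,t\rangle\leq S$ is automatically solvable, I get that $\langle(g,s),(h,t)\rangle$ is solvable exactly when $\langle g,h\rangle$ is. Hence $\Sol_{G\times S}((g,s))=\Sol_G(g)\times S$, and by the formula $\deg(x)=|G|-|\Sol_G(x)|$ the degree of $(g,s)$ in $\mathcal{NS}_{G\times S}$ equals $|G||S|-|\Sol_G(g)||S|=|S|\bigl(|G|-|\Sol_G(g)|\bigr)$, which is exactly $|S|$ times the degree of $g$ in $\mathcal{NS}_G$.

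In particular this degree is independent of $s$, so $\deg(\mathcal{NS}_{G\times S})=\{\,|S|\cdot d : d\in\deg(\mathcal{NS}_G)\,\}$. Since $|S|\geq 1$, multiplication by $|S|$ is injective on the integers and therefore induces a bijection between $\deg(\mathcal{NS}_G)$ and $\deg(\mathcal{NS}_{G\times S})$, giving $|\deg(\mathcal{NS}_G)|=|\deg(\mathcal{NS}_{G\times S})|$. The only point requiring genuine care, and hence the main obstacle, is the solvabilizer computation, i.e.\ the criterion that a subgroup of a direct product is solvable iff both its projections are; once that is established, the identification of $\Sol(G\times S)$ and the final scaling argument are routine bookkeeping.
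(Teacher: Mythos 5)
Your proposal is correct and follows essentially the same route as the paper: both establish that $\langle(g,s),(h,t)\rangle$ is solvable iff $\langle g,h\rangle$ is (via the embedding into $\langle g,h\rangle\times\langle s,t\rangle$ and projection to $G$), deduce $\Sol_{G\times S}((g,s))=\Sol_G(g)\times S$ and hence $\deg((g,s))=|S|\deg(g)$, and conclude that the degree sets have equal size. Your write-up is merely more explicit than the paper's, in particular in verifying $\Sol(G\times S)=\Sol(G)\times S$, which the paper leaves implicit.
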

\begin{proof}
Let $(x,s),(y,t)\in G\times S$, then $\langle (x,s),(y,t)\rangle \subseteq \langle x,y\rangle \times \langle s,t\rangle$. Therefore, $\langle (x,s),(y,t)\rangle$ is solvable if and only if $\langle x, y\rangle$ is solvable. Also, $\Sol_{G\times S}((x,s)) = \Sol_G(x)\times S$ and hence $\nbd((x, s)) = \nbd(x)\times S$. That is, $\deg((x,s)) = |S|\deg(x)$. This completes the proof.
\end{proof}
 
 Now we state the main result of this section.

\begin{thm} \label{degree-main-Th}
If $G$ is a finite non-solvable group such that $G/\Sol(G)\cong A_5$ then $|\deg(\mathcal{NS}_G)| = 3$. 
%\[
%|\{\deg(x)\mid x\in V(\mathcal{S}_{G})\}| = 3.
%\]
%Then $G/\Sol(G)\cong A_5$ if and only if $|\{\deg(x)\mid x\in V(\mathcal{S}_{G})\}| = 3$.
\end{thm}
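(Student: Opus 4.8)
The plan is to compare $\mathcal{NS}_G$ with $\mathcal{NS}_{A_5}$ through the quotient map by the solvable radical, and to transport the known degree set $\deg(\mathcal{NS}_{A_5}) = \{24, 36, 50\}$ up to $G$. First I would set $N = \Sol(G)$ and let $\pi : G \to G/N \cong A_5$ denote the natural projection; since the isomorphism $G/\Sol(G) \cong A_5$ is given and $A_5$ is simple, $N$ is exactly the kernel of $\pi$ and $\Sol(A_5)$ is trivial. In particular $x \notin \Sol(G)$ if and only if $\pi(x) \neq e$, so $x$ is a vertex of $\mathcal{NS}_G$ precisely when $\pi(x)$ is a vertex of $\mathcal{NS}_{A_5}$.

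The key step is the claim that, for all $x, y \in G$, the subgroup $\langle x, y \rangle$ is solvable if and only if $\langle \pi(x), \pi(y) \rangle$ is solvable in $A_5$. The forward implication is immediate, since a homomorphic image of a solvable group is solvable. For the converse I would use that $\langle \pi(x), \pi(y) \rangle = \langle x, y \rangle N / N \cong \langle x, y \rangle / (\langle x, y \rangle \cap N)$; if this quotient is solvable, then because $\langle x, y \rangle \cap N \leq N$ is solvable (as $N = \Sol(G)$ is the solvable radical, hence itself solvable), the extension $\langle x, y \rangle$ is solvable as well.

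From this equivalence it follows that $\Sol_G(x) = \pi^{-1}(\Sol_{A_5}(\pi(x)))$ for every $x \in G$, and since every fiber of $\pi$ has size $|N|$ we get $|\Sol_G(x)| = |N| \cdot |\Sol_{A_5}(\pi(x))|$. Combining this with the formula $\deg(x) = |G| - |\Sol_G(x)|$ recorded at the beginning of this section and with $|G| = 60|N|$, I would obtain $\deg(x) = |N|\bigl(60 - |\Sol_{A_5}(\pi(x))|\bigr) = |N| \cdot \deg_{\mathcal{NS}_{A_5}}(\pi(x))$ for every vertex $x$ of $\mathcal{NS}_G$, the right-hand side being well defined by the remark above. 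Finally, surjectivity of $\pi$ ensures that every vertex of $\mathcal{NS}_{A_5}$, and hence every value in $\deg(\mathcal{NS}_{A_5})$, is realised, so $\deg(\mathcal{NS}_G) = |N| \cdot \deg(\mathcal{NS}_{A_5}) = \{24|N|, 36|N|, 50|N|\}$, which has exactly three elements because $|N| > 0$.

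I expect the equivalence in the second paragraph to be the main obstacle; the remaining steps are counting. The crucial input is that $N = \Sol(G)$ is genuinely solvable, which is exactly what the cited identification of $\Sol(G)$ with the solvable radical supplies, and which makes the solvable-by-solvable argument work in the converse direction.
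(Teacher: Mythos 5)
Your proof is correct, and its skeleton matches the paper's: pass to the quotient $G/\Sol(G)\cong A_5$ and show that every vertex degree in $\mathcal{NS}_G$ is $|\Sol(G)|$ times the degree of the image vertex in $\mathcal{NS}_{A_5}$. Where you genuinely differ is in how the central equivalence --- $\langle x,y\rangle$ solvable if and only if $\langle x\Sol(G),y\Sol(G)\rangle$ solvable --- is established. The paper (Lemma \ref{solo}) derives it from the solvabilizer identity $\Sol_{HZ}(x)/Z=\Sol_{HZ/Z}(xZ)$ imported from \cite[Lemma 2.11 (3)]{hr}, whereas you argue directly: $\langle \pi(x),\pi(y)\rangle\cong\langle x,y\rangle/\left(\langle x,y\rangle\cap N\right)$ by the second isomorphism theorem, and since $N=\Sol(G)$ is the solvable radical (hence itself solvable, by \cite{gkps}) and solvability is closed under extensions, solvability of this quotient forces solvability of $\langle x,y\rangle$. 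The subsequent counting is essentially the paper's Proposition \ref{degsol} in different clothing: you count fibers of $\pi$ inside solvabilizers, obtaining $|\Sol_G(x)|=|N|\,|\Sol_{A_5}(\pi(x))|$ and then $\deg(x)=|N|\deg(\pi(x))$ via $\deg(x)=|G|-|\Sol_G(x)|$, while the paper decomposes neighborhoods into cosets of $\Sol(G)$; both give the same scaling and hence $|\deg(\mathcal{NS}_G)|=|\deg(\mathcal{NS}_{A_5})|=3$. What the paper's formulation buys is generality and reuse: Lemma \ref{solo}, Proposition \ref{degsol} and Corollary \ref{degsol-cor} are stated for arbitrary finite non-solvable groups and are invoked again later in the paper. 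What yours buys is self-containedness --- no appeal to the solvabilizer machinery of \cite{hr} --- plus a slightly stronger conclusion, namely the explicit degree set $\{24|N|,\,36|N|,\,50|N|\}$ rather than merely its cardinality.
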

To prove this theorem   we need the following results.
\begin{lem} \label{sol} 
Let $H$ be a subgroup of a finite group $G$ and $x, y \in G$.
\begin{enumerate}
\item[\rm (a)] If $H$ is solvable then $\langle H,\Sol(G)\rangle$ is also solvable subgroup of $G$.

\item[\rm (b)] If $\langle x, y \rangle$ is solvable then $\langle xu, yv \rangle$ is also solvable for all $u, v \in \Sol(G)$.

\item[\rm (c)] If $\langle x,y \rangle$ is not solvable then  $\langle xu, yv \rangle$ is not solvable for all $u, v \in \Sol(G)$.
\end{enumerate}
\end{lem}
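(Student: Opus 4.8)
The plan is to prove part (a) first and then deduce (b) and (c) from it almost formally. The one structural fact I will lean on throughout is the one recorded in the introduction (from \cite{gkps}): for a finite group $G$, the set $\Sol(G)$ is the solvable radical, that is, the unique largest solvable \emph{normal} subgroup of $G$. For part (a), since $\Sol(G)$ is normal in $G$, it is normal in every subgroup of $G$ containing it, so the product $H\Sol(G)$ is again a subgroup and in fact $\langle H, \Sol(G)\rangle = H\Sol(G)$. I would then run the standard extension argument: by the second isomorphism theorem $H\Sol(G)/\Sol(G) \cong H/(H\cap\Sol(G))$ is a quotient of the solvable group $H$ and is therefore solvable, while $\Sol(G)$ is solvable by definition; since an extension of a solvable group by a solvable group is solvable, $\langle H, \Sol(G)\rangle$ is solvable.

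For part (b), I would simply apply part (a) to the solvable subgroup $H = \langle x, y\rangle$. Because $u, v \in \Sol(G)$ and $x, y \in \langle x, y\rangle$, both $xu$ and $yv$ lie in $\langle \langle x, y\rangle, \Sol(G)\rangle$, so $\langle xu, yv\rangle$ is a subgroup of this group, which is solvable by (a); hence $\langle xu, yv\rangle$ is solvable. Part (c) is then the symmetric contrapositive of (b): if $\langle xu, yv\rangle$ were solvable for some $u, v \in \Sol(G)$, then applying (b) to the pair $xu, yv$ together with the radical elements $u^{-1}, v^{-1} \in \Sol(G)$ would yield that $\langle (xu)u^{-1}, (yv)v^{-1}\rangle = \langle x, y\rangle$ is solvable, contradicting the hypothesis; therefore $\langle xu, yv\rangle$ is not solvable.

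There is no serious obstacle here; the only step carrying genuine content is part (a), where the normality of $\Sol(G)$ is essential. Without normality one cannot conclude $\langle H, \Sol(G)\rangle = H\Sol(G)$, nor identify $\langle H, \Sol(G)\rangle/\Sol(G)$ with a quotient of $H$, and the whole extension argument collapses. So the care I would take is concentrated in (a): justifying that $\Sol(G)$ sits as a normal subgroup inside $\langle H,\Sol(G)\rangle$ and invoking the closure of solvability under extensions. Parts (b) and (c) are then one-line consequences, with (c) using only that $\Sol(G)$ is closed under inverses.
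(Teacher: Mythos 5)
Your proposal is correct and follows essentially the same route as the paper: the paper disposes of part (a) by citing \cite[Lemma 3.1]{bnn19} (whose content is exactly the normality-of-the-radical plus extension argument you supply), then notes that (b) follows from (a) and that (b) and (c) are equivalent, which is precisely your contrapositive argument via $u^{-1}, v^{-1} \in \Sol(G)$. The only difference is that you make explicit the details the paper delegates to the citation.
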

\begin{proof}
% Let $x,y \in \langle H,\Sol(G)\rangle$. Then $x=h_1s_1\dots h_ks_k$ and $y=u_1v_1\dots u_lv_l$, where $h_i,u_i\in H$ and $s_j,v_j\in \Sol(G)$. Let $M=\langle h_1,\dots,h_k,u_1\dots,u_l\rangle$. Then $M$ is solvable and so by \cite[Lemma 2.6]{hr}, $M_1=\langle M,s_1\rangle$ is solvable. Similarly, $M_2=\langle M_1,s_2\rangle$ is solvable. In this way we get that $M_{k+l}=\langle M_{k+l-1},s_{k+l}\rangle$ is solvable. Since $\langle x,y\rangle\leq M_{k+l}$,  we have $\langle x,y\rangle$ is solvable and hence $\langle H,\Sol(G)\rangle$ is solvable. 
Part (a) is proved in \cite[Lemma 3.1]{bnn19}.   Part (b) follows from part (a). Also, note that parts (b) and (c) are equivalent.
\end{proof}

\begin{lem}\label{solo} 
Let $G$ be a finite group and $x,y \in G$. Then $\langle x\Sol(G),y\Sol(G) \rangle$ is solvable if and only if $\langle x, y \rangle$ is solvable.
\end{lem}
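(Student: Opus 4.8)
The plan is to pass to the quotient $\bar{G} = G/\Sol(G)$ and exploit the standard closure properties of the class of solvable groups, together with the fact—recalled in the introduction from \cite{gkps}—that for a finite group $G$ the set $\Sol(G)$ is the solvable radical, in particular a \emph{solvable} normal subgroup of $G$. Writing $N = \Sol(G)$, $H = \langle x, y\rangle$, and letting $\pi : G \to G/N$ be the canonical projection, the first thing I would record is that $\pi(H) = \langle xN, yN\rangle$, since a homomorphism carries the subgroup generated by a set to the subgroup generated by the images of that set.

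For the forward implication, suppose $H = \langle x, y\rangle$ is solvable. Then $\langle xN, yN\rangle = \pi(H)$ is a homomorphic image of $H$, and homomorphic images of solvable groups are solvable; hence $\langle xN, yN\rangle$ is solvable.

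For the reverse implication, suppose $\langle xN, yN\rangle = \pi(H)$ is solvable. By the first isomorphism theorem, $\pi(H) \cong H/(H \cap N)$, so $H/(H\cap N)$ is solvable. Since $N$ is normal in $G$, the intersection $H \cap N$ is normal in $H$; and since $N$ is solvable, its subgroup $H \cap N$ is solvable as well. Thus $H$ possesses a solvable normal subgroup $H\cap N$ with solvable quotient $H/(H\cap N)$, whence $H = \langle x, y\rangle$ is solvable, being an extension of a solvable group by a solvable group.

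I do not anticipate any genuine obstacle: both directions reduce to the elementary facts that quotients and subgroups of solvable groups are solvable and that solvability is closed under extensions. The only point needing care is the appeal to the solvability of $N = \Sol(G)$, which rests on the cited result that $\Sol(G)$ is the solvable radical of the finite group $G$; this is precisely what makes $H \cap N$ solvable and allows the extension argument to go through.
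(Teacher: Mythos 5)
Your proof is correct, but it takes a genuinely different route from the paper's. The paper works inside the subgroup $HZ$ (where $H=\langle x,y\rangle$ and $Z=\Sol(G)$) and argues through solvabilizer identities: for the direction ``quotient solvable $\Rightarrow$ $H$ solvable'' it invokes a lemma of Hai-Reuven \cite{hr} relating $\Sol_{HZ}(x)/Z$ to $\Sol_{HZ/Z}(xZ)$, concluding $\Sol_{HZ}(x)=HZ$ and hence that $\langle x,y\rangle$ is solvable; for the converse it uses Lemma \ref{sol}(b) (coset invariance of solvability, resting on \cite{bnn19}) to get $\Sol_{HZ}(a)=HZ$ for every $a\in HZ$, whence $HZ$, and so $HZ/Z$, is solvable. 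You bypass the solvabilizer machinery entirely: your forward direction is just ``homomorphic images of solvable groups are solvable,'' and your reverse direction is the extension argument --- $H/(H\cap N)$ solvable and $H\cap N$ solvable force $H$ solvable --- where solvability of $H\cap N$ comes from the fact, cited from \cite{gkps}, that $N=\Sol(G)$ is the solvable radical. Note that $HZ/Z\cong H/(H\cap Z)$ by the second isomorphism theorem, so the two proofs are computing the same quotient. What your approach buys is self-containedness: modulo the already-invoked fact that $\Sol(G)$ is a solvable normal subgroup, it uses only the closure of solvability under subgroups, quotients and extensions. The paper's route instead stays within the solvabilizer toolkit it uses throughout (and, behind those cited lemmas, the deep two-generation criterion for solvability), which is thematically consistent but less elementary than your argument.
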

\begin{proof}
Let $H = \langle x, y \rangle$ and $Z = \Sol(G)$. Note that $\langle xZ, yZ \rangle = \frac{HZ}{Z}$. 
%The result follows from the fact that 
%\[
%\langle xZ, yZ \rangle = \frac{HZ}{Z}. 
%= \frac{\langle x, y \rangle}{\Sol(G)}.
%\]
%If $H$ is solvable then $\frac{H}{Z}$   is solvable. Hence, $\langle xZ, yZ \rangle$ is solvable. If $\langle xZ, yZ \rangle$ is solvable  then $\frac{H}{Z}$ is solvable. Also, $Z = \Sol(G)$ is solvable. Hence, $H$ is solvable. 
%
Suppose $\langle xZ, yZ \rangle$ is solvable. Then $\frac{HZ}{Z}$ is solvable. Since $Z\subset \Sol(HZ)$ and $Z$ is a normal subgroup of $HZ$, by \cite[Lemma 2.11 (3)]{hr}, we have 
%$\Sol_{\frac{HZ}{Z}}(xZ)=\frac{\Sol_{HZ}(x)}{Z}$. 
\[
\frac{\Sol_{HZ}(x)}{Z} = \Sol_{\frac{HZ}{Z}}(xZ)= \frac{HZ}{Z}.
\]
%noting that $\frac{HZ}{Z}$ is solvable.
%Since $\frac{HZ}{Z}$ is solvable, we have $\Sol_{\frac{HZ}{Z}}(xZ)=\frac{HZ}{Z}$ and so $\frac{\Sol_{HZ}(x)}{S}=\frac{HZ}{Z}$. 
Therefore, $\Sol_{HZ}(x)=HZ$. In particular, $\Sol_{H}(x)=H$ and so $H$ is solvable.

If $H$ is solvable then, by Lemma \ref{sol} (b), $\Sol_{HZ}(x)=HZ$ for all $x\in HZ$. Thus $HZ$ is solvable and so $\frac{HZ}{Z}$ is solvable. Hence, $\langle x_iZ,x_jZ \rangle$ is solvable for $x_i, x_j \in HZ$ and so  $\langle xZ, yZ \rangle$ is solvable.
\end{proof}

\begin{prop} \label{degsol}
 Let $G$ be a finite non-solvable group. Then  for all $x \in G \setminus \Sol(G)$ we have 
 \[
 \deg(x)=\deg(x\Sol(G))|\Sol(G)|.
 \]
\end{prop}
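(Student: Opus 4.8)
The plan is to set $Z = \Sol(G)$, pass to the quotient $\bar G = G/Z$, and reduce everything to the identity $\deg(w) = |G'| - |\Sol_{G'}(w)|$ (noted at the start of this section) applied both in $G$ and in $\bar G$. The application in $\bar G$ is legitimate: since $G$ is non-solvable and $Z$ is its solvable radical, $\bar G$ is non-solvable, and because $x \notin Z$ the coset $xZ$ is distinct from the identity coset, so $xZ$ is a genuine vertex of $\mathcal{NS}_{\bar G}$ (indeed $\Sol(\bar G)$ is trivial). The crux is then to show that $\Sol_G(x)$ breaks up into full cosets of $Z$ and to count those cosets via the quotient.

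First I would prove that $\Sol_G(x)$ is a union of cosets of $Z$. If $y \in \Sol_G(x)$, i.e. $\langle x,y\rangle$ is solvable, then Lemma \ref{sol}(b) with $u = e \in Z$ shows that $\langle x, yv\rangle$ is solvable for every $v \in Z$, so $yZ \subseteq \Sol_G(x)$. Hence $\Sol_G(x)$ is a disjoint union of cosets of $Z$, and writing $n$ for the number of such cosets gives $|\Sol_G(x)| = |Z|\cdot n$. (Here I do not need $\Sol_G(x)$ to be a subgroup, only that it is saturated under right multiplication by $Z$.)

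Next I would identify $n$ with $|\Sol_{\bar G}(xZ)|$. By Lemma \ref{solo}, a coset $yZ$ is contained in $\Sol_G(x)$ exactly when $\langle x,y\rangle$ is solvable, which holds if and only if $\langle xZ, yZ\rangle$ is solvable, i.e. if and only if $yZ \in \Sol_{\bar G}(xZ)$. Thus the cosets comprising $\Sol_G(x)$ are precisely the elements of $\Sol_{\bar G}(xZ)$, so $n = |\Sol_{\bar G}(xZ)|$ and therefore $|\Sol_G(x)| = |Z|\cdot|\Sol_{\bar G}(xZ)|$. Assembling the degree formula and using $|G| = |Z|\,|\bar G|$, I would conclude
\[
\deg(x) = |G| - |\Sol_G(x)| = |Z|\bigl(|\bar G| - |\Sol_{\bar G}(xZ)|\bigr) = |Z|\,\deg(xZ),
\]
which is the claimed identity with $Z = \Sol(G)$.

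The whole argument is essentially bookkeeping once Lemmas \ref{sol} and \ref{solo} are in hand, so I do not anticipate a serious obstacle. The only point demanding care is the verification that the identity $\deg(w) = |G'| - |\Sol_{G'}(w)|$ is valid in the quotient, which reduces to confirming that $\bar G$ is non-solvable and that $xZ$ is a non-identity vertex; both follow immediately from $Z$ being the solvable radical together with $x \notin Z$.
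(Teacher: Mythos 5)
Your proof is correct, and it is essentially the mirror image of the paper's argument. The paper decomposes the neighborhood $\nbd(x)$ into cosets of $\Sol(G)$ (via Lemma \ref{sol}(c): non-solvability of $\langle x,y\rangle$ survives multiplying the generators by elements of the radical) and then uses Lemma \ref{solo} to identify those cosets with the neighbors of $x\Sol(G)$ in $\mathcal{NS}_{G/\Sol(G)}$, obtaining $\deg(x)=n|\Sol(G)|$ and $\deg(x\Sol(G))=n$ directly. You instead decompose the complementary set $\Sol_G(x)$ into cosets (via Lemma \ref{sol}(b)), identify those cosets with $\Sol_{G/\Sol(G)}(x\Sol(G))$, and then translate into degrees through the identity $\deg(w)=|G'|-|\Sol_{G'}(w)|$ applied in both $G$ and the quotient. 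The two routes are dual --- parts (b) and (c) of Lemma \ref{sol} are contrapositives of each other --- so neither is more general or shorter; what your version buys is that it makes explicit a point the paper leaves silent, namely that $G/\Sol(G)$ is non-solvable with trivial solvable radical, so that $x\Sol(G)$ is a genuine vertex of $\mathcal{NS}_{G/\Sol(G)}$ and the degree/solvabilizer identity is legitimately applicable there.
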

\begin{proof}
 Let $y \in \nbd(x)$. By Lemma \ref{sol}, we have $yz\in \nbd(x)$ for all $z \in \Sol(G)$. Thus $\nbd(x)$ is a union of distinct cosets of $\Sol(G)$. Let $\nbd(x)= y_1\Sol(G)\cup y_2\Sol(G)\cup \dots \cup y_n\Sol(G)$.  Then $\deg(x) = n|\Sol(G)|$. By Lemma \ref{solo}, we have $\langle x\Sol(G),y_i\Sol(G) \rangle$ is not solvable if and only if $\langle x,y_i \rangle$ is not solvable. Therefore, $\nbd(x\Sol(G)) = \{y_1\Sol(G), y_2\Sol(G),  \dots,  y_n\Sol(G)\}$ in $\mathcal{NS}_{G/\Sol(G)}$. Hence, $\deg(x\Sol(G)) = n$ and the result follows.
\end{proof}

As a consequence of Proposition \ref{degsol} we have the following corollary.
\begin{cor} \label{degsol-cor}
 Let $G$ be a finite non-solvable group. Then $|\deg(\mathcal{NS}_{G/\Sol(G)})| = |\deg(\mathcal{NS}_G)|$.
\end{cor}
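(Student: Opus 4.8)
The plan is to exhibit an explicit map between the two vertex sets and transport the degree data through it. Consider the quotient map restricted to vertices, namely
\[
\varphi \colon G \setminus \Sol(G) \longrightarrow \frac{G}{\Sol(G)} \setminus \left\{\Sol(G)\right\}, \qquad \varphi(x) = x\Sol(G).
\]
First I would check that the codomain really is the vertex set of $\mathcal{NS}_{G/\Sol(G)}$. Since $\Sol(G)$ is the solvable radical of $G$, the quotient $G/\Sol(G)$ has trivial solvable radical, so $\Sol(G/\Sol(G)) = \{\Sol(G)\}$; hence the vertices of $\mathcal{NS}_{G/\Sol(G)}$ are exactly the non-identity cosets, as written. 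I would also observe that $\varphi$ is well defined (if $x \notin \Sol(G)$ then $x\Sol(G) \neq \Sol(G)$) and surjective, because every non-identity coset has some representative lying outside $\Sol(G)$, which is then a vertex of $\mathcal{NS}_G$ mapping onto it.

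Next I would invoke Proposition \ref{degsol}, which gives $\deg(x) = |\Sol(G)|\,\deg(\varphi(x))$ for every vertex $x$ of $\mathcal{NS}_G$. Taking the image of both sides over all vertices, the vertex degree set of $\mathcal{NS}_G$ becomes
\[
\deg(\mathcal{NS}_G) = \left\{ |\Sol(G)|\,\deg(\varphi(x)) : x \in G \setminus \Sol(G) \right\} = |\Sol(G)| \cdot \left\{ \deg(\bar x) : \bar x \in V(\mathcal{NS}_{G/\Sol(G)}) \right\},
\]
where the second equality uses the surjectivity of $\varphi$ to guarantee that every vertex of the quotient graph, and hence every value in $\deg(\mathcal{NS}_{G/\Sol(G)})$, is actually attained. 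Thus $\deg(\mathcal{NS}_G)$ is precisely the scaling of $\deg(\mathcal{NS}_{G/\Sol(G)})$ by the fixed positive integer $|\Sol(G)|$.

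To finish, I would note that multiplication by the nonzero constant $|\Sol(G)|$ is an injection on the nonnegative integers, hence a bijection between the finite sets $\deg(\mathcal{NS}_{G/\Sol(G)})$ and $|\Sol(G)| \cdot \deg(\mathcal{NS}_{G/\Sol(G)}) = \deg(\mathcal{NS}_G)$; therefore $|\deg(\mathcal{NS}_G)| = |\deg(\mathcal{NS}_{G/\Sol(G)})|$, as claimed. This is essentially routine, and the only point deserving care is the opening one, namely confirming that $\varphi$ surjects onto the full vertex set of the quotient graph (so that no degree value of the quotient is missed) and that the quotient's solvable radical is trivial so its vertex set is correctly identified. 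Everything else is the bookkeeping observation that a constant rescaling preserves the cardinality of a set of integers.
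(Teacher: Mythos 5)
Your proof is correct and follows exactly the paper's intended route: the paper states this corollary as an immediate consequence of Proposition \ref{degsol}, and your argument simply fills in the routine details (surjectivity of $x \mapsto x\Sol(G)$ onto the vertex set of $\mathcal{NS}_{G/\Sol(G)}$, triviality of $\Sol(G/\Sol(G))$, and the fact that scaling by the constant $|\Sol(G)|$ is a bijection on degree sets). Nothing in your write-up deviates from or adds to what the paper leaves implicit.
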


\noindent \textbf{Proof of Theorem \ref{degree-main-Th}:}
%\begin{proof} 
%{\bf NOT COMPLETE}
%Suppose $G/\Sol(G)\cong A_5$. Let $\phi$ be an isomorphism from $G/\Sol(G)$ onto $A_5$. Let $\psi$ be a map from $\mathcal{S}_{G/\Sol(G)}$ to $\mathcal{S}_{A_5}$, define by $\psi(xSol(G))=\phi(xSol(G))$. Then $\psi$ is one-one and onto. Thus $\mathcal{S}_{G/\Sol(G)}\cong \mathcal{S}_{A_5}$. Hence $|\{\deg(x)\mid x\in V(\mathcal{S}_{G/\Sol(G)})\}| = 3$. By Proposition \ref{degsol}, we have $|\{\deg(x)\mid x\in V(\mathcal{S}_{G})\}| = 3$.
Note that $G/\Sol(G)\cong A_5$ implies ${\mathcal{NS}}_{G/\Sol(G)} \cong {\mathcal{NS}}_{A_5}$. Therefore
\[
\left|\deg\left(\mathcal{NS}_{G/\Sol(G)}\right)\right| = |\deg(\mathcal{NS}_{A_5})| = 3.
\] 
Hence, the result follows from Corollary \ref{degsol-cor}.

We conclude this section with the following upper bound for $|\deg(\mathcal{NS}_G)|$. 
\begin{thm}
If $G$ is a finite non-solvable group having $n$ distinct solvabilizers then $|\deg(\mathcal{NS}_G)| \leq n - 1$.
\end{thm}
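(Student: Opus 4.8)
The plan is to exploit the identity $\deg(x) = |G| - |\Sol_G(x)|$ recorded at the start of this section, which shows that the degree of a vertex depends only on the order of its solvabilizer, and hence only on the solvabilizer $\Sol_G(x)$ itself. Consequently the degree function factors through the assignment $x \mapsto \Sol_G(x)$, so distinct degrees force distinct solvabilizers. It therefore suffices to bound the number of distinct solvabilizers that actually occur \emph{among the vertices} of $\mathcal{NS}_G$, i.e.\ among elements of $G \setminus \Sol(G)$.

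First I would separate the solvabilizers contributed by $\Sol(G)$ from those contributed by the vertices. For any $x \in \Sol(G)$ we have $\langle x, y\rangle$ solvable for every $y \in G$, so $\Sol_G(x) = G$; and since the identity always lies in $\Sol(G)$, the full group $G$ genuinely occurs as one of the $n$ distinct solvabilizers. On the other hand, every vertex $x \in G \setminus \Sol(G)$ admits some $y$ with $\langle x, y\rangle$ non-solvable, whence $\Sol_G(x) \neq G$. Thus $G$ is a solvabilizer that is never the solvabilizer of a vertex, so at most $n - 1$ of the $n$ distinct solvabilizers can arise from vertices.

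Combining these observations gives the bound: the vertices realize at most $n - 1$ distinct solvabilizers, each value $|\Sol_G(x)|$ is determined by $\Sol_G(x)$, and each degree $|G| - |\Sol_G(x)|$ is in turn determined by $|\Sol_G(x)|$, so $|\deg(\mathcal{NS}_G)| \leq n - 1$. The argument is short once the right bookkeeping is in place; the only step requiring care — and the natural place to slip — is recognizing that $G$ must be counted among the $n$ solvabilizers yet can never appear as the solvabilizer of a vertex. This single observation is exactly what sharpens the naive bound $n$ to $n - 1$.
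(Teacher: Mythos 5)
Your proof is correct and follows essentially the same route as the paper's: both rest on the identity $\deg(x) = |G| - |\Sol_G(x)|$ together with the observation that $G$ itself is one of the $n$ solvabilizers (being $\Sol_G(1)$) yet is never the solvabilizer of a vertex, so the vertices contribute at most $n-1$ distinct solvabilizers and hence at most $n-1$ distinct degrees. Your write-up merely makes explicit the bookkeeping that the paper leaves implicit when it lists the solvabilizers as $G, X_1, \dots, X_{n-1}$.
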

\begin{proof}
Let $G, X_1, X_2, \dots, X_{n - 1}$ be the distinct solvabilizers  of $G$ where $\Sol_G(x_i) = X_i$ for some $x_i \in G \setminus \Sol(G)$ and $i = 1, 2, \dots, n - 1$. Then 
\[
\deg(\mathcal{NS}_G) = \{|G| - |X_1|, |G| - |X_2|, \dots, |G| - |X_{n - 1}|\}.
\]
 Hence, the result follows. 
\end{proof}

\section{Graph Realization}
In \cite{hr}, it was shown that $\mathcal{NS}_G$ is connected with diameter two. It is also shown that  $\mathcal{NS}_G$ is not regular and hence  not a complete graph. Recently, Akbari \cite{akbari} have shown that $\mathcal{NS}_G$ is not a tree. In this section, we shall show that $\mathcal{NS}_G$ is not a complete multi-partite graph. We shall also  show that $\mathcal{NS}_G$ is hamiltonian for some groups. The following results are useful in this regard.

\begin{lem}\cite{hr}\label{solv} 
Let $G$ be a finite group. Then G is solvable if and only if $\Sol_G(x)$ is a subgroup of $G$ for all $x\in G$.
\end{lem}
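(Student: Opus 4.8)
The forward implication is immediate: if $G$ is solvable then so is every subgroup, in particular every $\langle g,x\rangle$, so $\Sol_G(x)=G$ for every $x$, which is a subgroup. All the content is in the converse, which I would prove by contraposition, via a minimal counterexample. So suppose the class of non-solvable finite groups $G$ for which $\Sol_G(x)$ is a subgroup for all $x\in G$ is nonempty, and let $G$ be one of smallest order.

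The first point is that this hypothesis is inherited. If $H\le G$ and $x\in H$ then $\Sol_H(x)=\Sol_G(x)\cap H$, so $\Sol_H(x)$ is again a subgroup; by minimality of $G$ every proper subgroup of $G$ must therefore be solvable. Secondly, the hypothesis passes to quotients by solvable normal subgroups: if $N\trianglelefteq G$ is solvable then $\langle xN,yN\rangle$ is solvable if and only if $\langle x,y\rangle$ is (the nontrivial direction because an extension of a solvable group by a solvable group is solvable), exactly as in Lemma \ref{solo}, and $\Sol_G(x)$ is a union of cosets of $N$ as in Lemma \ref{sol}; hence $\Sol_{G/N}(xN)=\Sol_G(x)N/N$ is a subgroup. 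If $G$ had a proper nontrivial normal subgroup $N$, then $N$ would be solvable (being a proper subgroup) and $G/N$ would be a strictly smaller non-solvable group satisfying the hypothesis, contradicting minimality. Therefore $G$ is simple; being non-solvable it is non-abelian, and since all its proper subgroups are solvable, $G$ is a \emph{minimal simple group}, one of the groups on Thompson's list ($PSL(2,q)$ for suitable $q$, the Suzuki groups $Sz(2^{p})$, and $PSL(3,3)$).

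Now I would derive a contradiction from the structure of such a $G$. Since every proper subgroup is solvable, for $x\neq 1$ we have $g\in\Sol_G(x)$ precisely when $\langle g,x\rangle\neq G$; moreover $\Sol_G(x)\neq G$, because $\Sol_G(x)=G$ would mean $x\in\Sol(G)=1$. The plan is to produce a nonidentity $x$ lying in two \emph{distinct} maximal subgroups $M_1\neq M_2$ of $G$. Then $M_1\cup M_2\subseteq\Sol_G(x)$; if $\Sol_G(x)$ were a subgroup it would contain $\langle M_1,M_2\rangle$, which equals $G$ because distinct maximal subgroups generate everything, forcing $\Sol_G(x)=G$, a contradiction. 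Hence $\Sol_G(x)$ is not a subgroup, contradicting the hypothesis and completing the argument. For the smallest case $G=A_5=PSL(2,5)$ a $3$-cycle does the job, as it lies in the two point-stabilizer copies of $A_4$.

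The genuine obstacle is the last existence claim: that every minimal simple group contains a nonidentity element belonging to at least two distinct maximal subgroups (equivalently, that its maximal subgroups do not partition the nonidentity elements). The two inheritance reductions are routine, but verifying this last statement appears to require going through Thompson's families case by case and using known descriptions of their maximal subgroups; this classification-dependent check is where the real work lies, and it is presumably the content behind the citation \cite{hr}.
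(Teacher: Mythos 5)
First, a caveat: the paper offers no proof of this lemma at all --- it is imported verbatim from \cite{hr} --- so there is no in-paper argument to compare yours against; I can only assess the proposal on its own terms. Your reductions are all correct: the forward direction is trivial; the hypothesis passes to subgroups because $\Sol_H(x)=\Sol_G(x)\cap H$; it passes to quotients by solvable normal subgroups by the same extension argument as Lemma \ref{solo} (and the coset argument of Lemma \ref{sol}); hence a minimal counterexample $G$ is a non-abelian simple group all of whose proper subgroups are solvable. Your contradiction mechanism is also correct: if $x\neq 1$ lies in distinct maximal subgroups $M_1\neq M_2$, then $M_1\cup M_2\subseteq\Sol_G(x)$, so if $\Sol_G(x)$ were a subgroup it would contain $\langle M_1,M_2\rangle=G$, forcing $x\in\Sol(G)$, which is trivial for non-abelian simple $G$ since $\Sol(G)$ is the solvable radical by \cite{gkps}. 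But the one statement carrying the entire converse --- that such an element $x$ exists, i.e.\ that the maximal subgroups of $G$ do not partition $G\setminus\{1\}$ --- is only verified for $A_5$ and otherwise deferred to a case-by-case trek through Thompson's classification of minimal simple groups. As written, this is a genuine gap: the crux is asserted, not proved.

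The gap is fillable by an elementary count, which also shows that your invocation of Thompson's list (a far deeper theorem than the lemma itself) is unnecessary. Suppose $G$ is a finite non-abelian simple group in which any two distinct maximal subgroups intersect trivially. Since $G$ is not cyclic, every $g\neq 1$ generates a proper subgroup and hence lies in some maximal subgroup, so the maximal subgroups partition $G\setminus\{1\}$. Each maximal subgroup $M$ is nontrivial (a trivial maximal subgroup would force $G$ to be cyclic of prime order) and self-normalizing, since $M\leq N_G(M)\lneq G$ by simplicity; thus the conjugacy class of $M$ has exactly $[G:M]$ members, all maximal. Taking representatives $M_1,\dots,M_k$ of the classes and counting the partitioned elements,
\[
|G|-1=\sum_{i=1}^{k}[G:M_i]\,\bigl(|M_i|-1\bigr)=\sum_{i=1}^{k}\bigl(|G|-[G:M_i]\bigr)\geq k\,\frac{|G|}{2},
\]
using $|M_i|\geq 2$, i.e.\ $[G:M_i]\leq |G|/2$. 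If $k\geq 2$ this gives $|G|-1\geq |G|$, absurd; if $k=1$ it gives $[G:M_1]=1$, contradicting $M_1\neq G$. Hence some nonidentity element lies in two distinct maximal subgroups, which is exactly what your argument needs --- the only inputs are simplicity and the solvability of proper subgroups, both already secured by your minimal-counterexample reductions. With this paragraph inserted, your proof is complete and classification-free.
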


\begin{thm}\cite{gkps}\label{thomson} 
 Let $G$ be a finite non-solvable group and $x, y \in G \setminus \Sol(G)$. Then  there exists $s \in G \setminus \Sol(G)$ such that $\langle x, s \rangle$ and $\langle y, s \rangle$ are not solvable.
\end{thm}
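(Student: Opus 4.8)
The plan is to first quotient out the solvable radical, reducing to a group with trivial solvable radical, and then to produce a common non-solving partner by a structural reduction to almost simple groups. \emph{Reduction to $\Sol(G)=1$.} I would pass to $\overline{G}=G/\Sol(G)$, whose solvable radical is trivial by \cite{gkps}, noting that the images $\overline{x},\overline{y}$ remain nontrivial because $x,y\notin\Sol(G)$. By Lemma \ref{solo}, for every $s\in G$ the subgroup $\langle x,s\rangle$ is non-solvable if and only if $\langle\overline{x},\overline{s}\rangle$ is non-solvable, and similarly for $y$; moreover any $s$ with $\langle x,s\rangle$ non-solvable automatically avoids $\Sol(G)$, since $\langle x,\Sol(G)\rangle$ is solvable by Lemma \ref{sol}(a). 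Hence it would suffice to find a nontrivial $\overline{s}\in\overline{G}$ generating non-solvable subgroups with both $\overline{x}$ and $\overline{y}$ and then lift it arbitrarily, so I may assume $\Sol(G)=1$ and $x,y\neq 1$.

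\emph{Reformulation.} Writing $\Sol_G(x)$ for the solvabilizer, the target becomes $\Sol_G(x)\cup\Sol_G(y)\neq G$, i.e. the two solvabilizers do not cover $G$. The identity $\Sol(G)=R(G)$ of \cite{gkps} already shows that each $\Sol_G(x)$ is a \emph{proper} subset, but this is not enough, and a counting argument is hopeless: two solvabilizers can be so large that their orders sum to more than $|G|$. Indeed, in $A_5$ the degree set is $\{24,36,50\}$, so some solvabilizers have order $60-24=36$, and $36+36>60=|A_5|$. A common partner therefore cannot be found by cardinality estimates, and a structural argument is forced.

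\emph{Structural reduction.} Since $\Sol(G)=1$, the group $G$ has no nontrivial abelian normal subgroup, so its socle $S=\operatorname{soc}(G)=T_1\times\cdots\times T_n$ is a direct product of nonabelian finite simple groups with $C_G(S)=1$, whence $G$ embeds in $\operatorname{Aut}(S)$. I would then reduce, by induction on the number of socle factors together with the permutation action of $G$ on $\{T_1,\dots,T_n\}$, to the almost simple case $T\trianglelefteq G\le\operatorname{Aut}(T)$ with $T$ nonabelian simple. There the common partner should be sought inside $T$: using the consequence of the classification that every nonabelian finite simple group has spread at least $2$ (Breuer--Guralnick--Kantor), so that for any nontrivial $a,b\in T$ there is $c\in T$ with $\langle a,c\rangle=\langle b,c\rangle=T$, one obtains $s\in T$ for which $\langle x,s\rangle$ and $\langle y,s\rangle$ surject onto the non-solvable group $T$ and hence are non-solvable; the subcase where $x$ or $y$ induces an outer automorphism of $T$ is handled by the analogous conjugacy-class analysis of \cite{gkps}.

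\emph{Main obstacle.} The routine parts are the two reductions above and the bookkeeping inside the socle. The hard part will be the almost simple case, which the $A_5$ computation shows is beyond counting and rests squarely on classification-based results about generation of simple groups by conjugate pairs. In particular, correctly treating elements that lie outside the socle and the situation of several, possibly $G$-permuted, socle factors is the technical heart of the argument.
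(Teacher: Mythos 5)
The paper never proves Theorem \ref{thomson} at all: it is imported verbatim from \cite{gkps} (hence the citation attached to the statement), because it is a deep result whose known proofs rest on the classification of finite simple groups. So the benchmark is not an internal argument but whether your sketch amounts to an independent proof, and it does not. Your preliminary reductions are correct and well executed: passing to $G/\Sol(G)$ via Lemma \ref{solo}, with the observation via Lemma \ref{sol}(a) that any common partner automatically lies outside $\Sol(G)$, is valid (note it silently uses that $\Sol(G)$ is a normal subgroup, which is itself the main theorem of \cite{gkps}); and the reformulation that the goal is $\Sol_G(x)\cup\Sol_G(y)\neq G$, together with the $A_5$ degree computation showing that cardinality estimates cannot work, is a genuinely useful observation. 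One small inaccuracy: properness of $\Sol_G(x)$ for $x\notin\Sol(G)$ is immediate from the definition of $\Sol(G)$ and needs nothing from \cite{gkps}.

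The genuine gaps are in the structural half, which is the entire content of the theorem. First, the reduction from a socle $T_1\times\cdots\times T_n$ with $G$ permuting the factors down to the almost simple case is asserted (``by induction'') but never carried out; handling elements $x$, $y$ that act with nontrivial, possibly different, cycle structures on the factors is a substantial piece of work, not bookkeeping. Second, and more seriously, in the almost simple case you dispose of elements inducing outer automorphisms by appeal to ``the analogous conjugacy-class analysis of \cite{gkps}'' --- that is, by invoking the proof of the very result you are trying to establish, which makes the attempt circular as a standalone proof. Even the inner case $x,y\in T$ rests on the Breuer--Guralnick--Kantor theorem that every finite simple group has spread at least $2$, itself a classification-based result (and one published after \cite{gkps}). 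In short, your proposal is an accurate map of where the difficulty lives, but it is a roadmap rather than a proof, and it ultimately confirms that the paper's choice to quote \cite{gkps} instead of proving the statement is the right one.
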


%\begin{prop}\cite{hr} Let $G$ be a finite non-solvable group. Then $\mathcal{S}_G$ is connected and $diam(\mathcal{S}_G)=2$.
%\end{prop}

%\begin{prop} Let $G$ be a linear group. Then $\mathcal{S}_G$ is connected and $diam(\mathcal{S}_G)=2$.
%\end{prop}
%\begin{proof}
%By \cite[Corollary 6.5]{gkps} , we have, for any $x,y\in G\setminus \Sol(G)$, there exist $z\in G\setminus \Sol(G)$, such that both $\langle x,z\rangle$ and $\langle y,x\rangle$ are not nilpotent. Thus $\mathcal{S}_G$ is connected and $diam(\mathcal{S}_G)=2$.
%\end{proof}

%\begin{prop}\cite[Corollary 3.17]{hr} Let $G$ be a finite non-solvable group. Then $\mathcal{S}_G$ is not regular.
%\end{prop}

%From the above proposition, we can conclude that $\mathcal{S}_G$ cannot be a complete graph. 

\begin{thm}\label{prop-nn}
 Let $G$ be a finite non-solvable group. Then $\mathcal{NS}_G$ is not a complete multi-partite graph. In particular, $\mathcal{NS}_G$ is not a complete bipartite graph.
\end{thm}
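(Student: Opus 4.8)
The plan is to argue by contradiction, exploiting the standard characterization that a graph is complete multipartite precisely when the relation ``equal or non-adjacent'' is an equivalence relation on its vertices. For $\mathcal{NS}_G$ this means that, for distinct vertices $x,y$, lying in the same part is the same as $\langle x,y\rangle$ being solvable. So suppose $\mathcal{NS}_G$ were complete multipartite with parts $\{V_i\}$. First I would identify each part concretely: for $x$ lying in a part $V$, the non-neighbours of $x$ are exactly the vertices $y$ with $\langle x,y\rangle$ solvable, so $V=\Sol_G(x)\setminus\Sol(G)$; and since every representative of a part produces the same part, $\Sol_G(x)=V\cup\Sol(G)$ is independent of the chosen $x\in V$.

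Next I would analyse $\Sol_G(x)=V\cup\Sol(G)$ as a candidate subgroup. It clearly contains the identity and is closed under inverses (since $\langle x,a\rangle=\langle x,a^{-1}\rangle$), and it contains every power of each of its elements (as $\langle x,a^{k}\rangle\subseteq\langle x,a\rangle$). Using Lemma \ref{sol} together with the normality of $\Sol(G)$, one checks that $\Sol_G(x)$ is invariant under left and right multiplication by $\Sol(G)$. Consequently the only way $\Sol_G(x)$ can fail to be a subgroup is through a product $ab$ with $a,b\in V$ that escapes $V\cup\Sol(G)$.

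Now I would invoke non-solvability. Since $G$ is non-solvable, Lemma \ref{solv} guarantees a vertex $x$ whose solvabilizer $\Sol_G(x)$ is \emph{not} a subgroup; taking its part $V_1$, the previous paragraph forces distinct $a,b\in V_1$ (distinct because any product of $a$ with itself is a power, hence stays inside $\Sol_G(x)$) with $ab\notin V_1\cup\Sol(G)$, so that $ab$ is a vertex lying in some other part $V_2\ne V_1$. The hard part is to squeeze a contradiction out of this configuration, and the key observation is that $\langle a,ab\rangle=\langle a,b\rangle$, which is solvable because $a$ and $b$ lie in the common part $V_1$ and are therefore non-adjacent. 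Hence $a$ and $ab$ are non-adjacent and must share a part, contradicting $a\in V_1$ and $ab\in V_2$.

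This contradiction shows $\mathcal{NS}_G$ is not complete multipartite, and the complete bipartite case follows as the special case of two parts. I expect the only delicate bookkeeping to be verifying that the offending product $ab$ genuinely involves two distinct elements of the \emph{same} part (rather than one element drawn from $\Sol(G)$, where the multiplication-invariance of the previous step would keep $ab$ inside), and that $ab\ne a$; both reduce to the facts that all powers of a vertex remain in its solvabilizer and that vertices avoid $\Sol(G)$.
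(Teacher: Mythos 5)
Your proposal is correct and is essentially the paper's own proof: both arguments identify each part of the hypothetical multipartition with $\Sol_G(x)\setminus\Sol(G)$, exploit the identity $\langle a, ab\rangle = \langle a,b\rangle$ to show that a product of two non-adjacent elements stays non-adjacent to its factors, and conclude via Lemma \ref{solv} (the criterion that a finite $G$ is solvable if and only if every solvabilizer $\Sol_G(x)$ is a subgroup). The paper argues directly---closure makes every $\Sol_G(x)$ a subgroup, forcing $G$ to be solvable---whereas you run the same argument contrapositively by extracting an offending product from a non-subgroup solvabilizer; this difference is purely organizational, and if anything your treatment of products involving elements of $\Sol(G)$ (via normality of the solvable radical) is more careful than the paper's, which glosses over that case.
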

\begin{proof}
 Suppose $\mathcal{NS}_G$ is a complete multi-partite graph. Let $X_1, X_2, \dots, X_n$ be the partite sets. Let $x \in G \setminus \Sol(G)$, then $x \in X_i$ for some $i$ and $\Sol_G(x) = \Sol(G) \cup X_i$. Let $y, z \in \Sol_G(x)$. Then $\langle y,z \rangle$ is solvable and $yz \in \Sol_G(y)=\Sol_G(x)$. Thus $\Sol_G(x)$ is a subgroup of $G$. By Lemma \ref{solv}, $G$ is solvable, a contradiction. Hence, the result follows.
\end{proof}

\begin{thm} 
Let $G$ be a finite non-solvable group. Then $\mathcal{NS}_G$ is not a bipartite graph.
\end{thm}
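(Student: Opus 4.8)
The plan is to exhibit an odd cycle in $\mathcal{NS}_G$, which suffices because a graph is bipartite if and only if it contains no odd cycle. In fact I would produce the shortest possible odd cycle, a triangle: it is enough to find three distinct vertices $x,y,s \in G\setminus\Sol(G)$ that are pairwise adjacent, i.e.\ such that $\langle x,y\rangle$, $\langle y,s\rangle$ and $\langle x,s\rangle$ are all non-solvable.

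First I would fix any vertex $x\in G\setminus\Sol(G)$; this set is non-empty since $G$ is non-solvable. By the degree bound \eqref{1} we have $\deg(x)\geq 6>0$, so $x$ has at least one neighbour $y$. This means $x\neq y$ and $\langle x,y\rangle$ is not solvable, giving the first edge of the prospective triangle.

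Next I would invoke Theorem \ref{thomson} for the pair $x,y$: it supplies an element $s\in G\setminus\Sol(G)$ for which both $\langle x,s\rangle$ and $\langle y,s\rangle$ are non-solvable, closing the remaining two sides of the triangle. The only point requiring care is that $s$ is genuinely a third vertex. This is immediate: if $s=x$ then $\langle x,s\rangle=\langle x\rangle$ is cyclic, hence solvable, contradicting the choice of $s$; the same reasoning rules out $s=y$. Thus $x,y,s$ are three distinct, pairwise adjacent vertices, so $\mathcal{NS}_G$ contains a triangle and therefore an odd cycle, and cannot be bipartite.

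I do not anticipate any real obstacle here, since the heavy lifting is done by Theorem \ref{thomson}, which already guarantees a common non-solvable partner $s$ for any two vertices; everything else is the elementary observation that cyclic groups are solvable together with the standard odd-cycle characterization of bipartiteness. It is worth noting that this is a strictly stronger conclusion than Theorem \ref{prop-nn}, since a graph may fail to be complete bipartite yet still be bipartite, so a separate argument of this kind is genuinely needed.
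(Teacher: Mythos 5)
Your proof is correct and rests on exactly the same key ingredient as the paper's, namely Theorem \ref{thomson}: the paper picks one vertex from each partite set and derives a contradiction because the common non-solvable partner $s$ can lie in neither part, while you package the same fact as an explicit triangle (using the degree bound \eqref{1} to get the first edge) together with the odd-cycle characterization of bipartiteness. The arguments are essentially identical in substance, and your distinctness checks for $s$ (a cyclic group being solvable) are valid.
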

\begin{proof}
Suppose $\mathcal{NS}_G$ is a bipartite graph. Let $X, Y$ be the partite sets.  Let $x \in X$ and $y \in Y$. Then, by Theorem \ref{thomson}, there exists $z \in G\setminus \Sol(G)$ such that $\langle x, z \rangle$  and   $\langle y, z \rangle$ are not solvable. Therefore, $z \notin X \cup Y$, a contradiction. Hence the result follows.
%Suppose $\mathcal{NS}_G$ is a bipartite graph. Let $X,Y$ be the partite sets. Then $G\setminus \Sol(G)=X\cup Y$ and $X\cap Y=\emptyset$. Let $x \in X$, then $X\subseteq \Sol_G(x) \setminus \Sol(G)$. Similarly, if $y \in Y$, then $Y\subseteq \Sol_G(x)\setminus \Sol(G)$. It follows that $(\Sol_G(x)\setminus \Sol(G))\cap (\Sol_G(y)\setminus \Sol(G))=\emptyset$. Thus $\mathcal{NS}_G$ is a complete bipartite graph. Now the result follows from Theorem  \ref{prop-nn}.
\end{proof}

\begin{thm} \label{hamiltonian-3}
Let $G$ be a finite non-solvable group such that $|\Sol_G(x)|\leq \frac{|G|}{2}$ for all $x\in G\setminus \Sol(G)$. Then $\mathcal{NS}_G$ is hamiltonian.
\end{thm}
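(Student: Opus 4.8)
The plan is to verify a minimum-degree condition and invoke Dirac's theorem, which asserts that any simple graph on $n \geq 3$ vertices whose minimum degree is at least $n/2$ is hamiltonian. First I would record the two basic facts already established at the start of Section 2: the vertex set of $\mathcal{NS}_G$ has cardinality $n = |G| - |\Sol(G)|$, and every vertex $x$ satisfies $\deg(x) = |G| - |\Sol_G(x)|$. The hypothesis $|\Sol_G(x)| \leq \frac{|G|}{2}$ for all $x \in G \setminus \Sol(G)$ then translates directly into a lower bound on the degree, namely
\[
\deg(x) = |G| - |\Sol_G(x)| \geq |G| - \frac{|G|}{2} = \frac{|G|}{2}
\]
for every vertex $x$.

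Next I would compare this with the threshold $n/2$ required by Dirac's theorem. Since $\Sol(G) \neq \emptyset$ (it contains the identity), we have $|\Sol(G)| \geq 1$ and hence $\frac{|G|}{2} \geq \frac{|G| - |\Sol(G)|}{2} = \frac{n}{2}$. Combining this with the degree bound above yields the minimum-degree estimate $\delta(\mathcal{NS}_G) \geq \frac{|G|}{2} \geq \frac{n}{2}$, which is exactly Dirac's hypothesis. The remaining condition $n \geq 3$ is automatic: by the lower bound $\deg(x) \geq 6$ recorded in \eqref{1}, the graph $\mathcal{NS}_G$ has at least seven vertices, so $n \geq 3$.

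With both hypotheses of Dirac's theorem in hand, the conclusion that $\mathcal{NS}_G$ is hamiltonian follows immediately. I do not expect any serious obstacle here: the whole argument reduces to the translation between the solvabilizer condition and the degree sequence, together with the elementary inequality $\frac{|G|}{2} \geq \frac{n}{2}$. The only point warranting a moment's care is making sure the threshold is stated relative to the correct vertex count $n = |G| - |\Sol(G)|$ rather than $|G|$, and that is precisely where the factor $|\Sol(G)| \geq 1$ does its (small) work.
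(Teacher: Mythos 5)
Your proof is correct and follows essentially the same route as the paper: translate the solvabilizer bound into the degree bound $\deg(x) = |G| - |\Sol_G(x)| \geq |G|/2 \geq n/2$ where $n = |G| - |\Sol(G)|$, then invoke Dirac's theorem. Your explicit verification that $n \geq 3$ (via the bound $\deg(x) \geq 6$) is a small point of care that the paper's proof leaves implicit, but the argument is otherwise identical.
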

\begin{proof}
Note that  $\deg(x) = |G| - |\Sol_G(x)|$ for all $x \in G \setminus \Sol(G)$.
 Since $|\Sol_G(x)|\leq \frac{|G|}{2}$ for all $x \in G \setminus \Sol(G)$ we have $|G| \geq 2|\Sol_G(x)|$. Thus, it follows that $\deg(x) > (|G|-|\Sol(G)|)/2$. Therefore by Dirac's Theorem \cite[p. 54]{bon}, $\mathcal{NS}_G$ is hamiltonian.
\end{proof}

\begin{cor}
 The non-solvable graph of the group $PSL(3,2) \rtimes \mathbb{Z}_2$, $A_6$ and $PSL(2,8)$ are hamiltonian. 
\end{cor}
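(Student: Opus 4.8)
The plan is to verify that each of the three groups satisfies the hypothesis of Theorem \ref{hamiltonian-3}, namely that $|\Sol_G(x)| \le |G|/2$ for every $x \in G \setminus \Sol(G)$; Hamiltonicity then follows at once. First I would record that all three groups have trivial solvable radical: $A_6$ and $PSL(2,8)$ are simple, while the socle $PSL(3,2)$ of $PSL(3,2)\rtimes \mathbb{Z}_2$ is its unique minimal normal subgroup and is non-abelian simple, so $\Sol(G)=\{1\}$ in each case. Consequently the vertex set is $G\setminus\{1\}$, the number of vertices is $|G|-1$, and the degree condition $\deg(x) > (|G|-|\Sol(G)|)/2 = (|G|-1)/2$ used in the proof of Theorem \ref{hamiltonian-3} reduces exactly to $|\Sol_G(x)| \le |G|/2$, since then $\deg(x) = |G|-|\Sol_G(x)| \ge |G|/2 > (|G|-1)/2$.

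The next step is to bound $|\Sol_G(x)|$. For $g \in G$ one has $g \in \Sol_G(x)$ precisely when $\langle g,x\rangle$ is solvable, and any solvable subgroup containing both $g$ and $x$ lies inside some maximal solvable subgroup through $x$; hence $\Sol_G(x) = \bigcup_{M} M$, the union running over the maximal solvable subgroups $M$ of $G$ with $x \in M$. I would therefore list the conjugacy classes of maximal subgroups of each group and discard the non-solvable ones: for instance, in $A_6$ the maximal subgroups are (up to conjugacy) $A_5$, $S_4$ and $(\mathbb{Z}_3\times\mathbb{Z}_3)\rtimes\mathbb{Z}_4$, of which only the latter two (orders $24$ and $36$) are solvable. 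Since $|\Sol_G(x)|$ depends only on the conjugacy class of $x$ and each of these groups has only a handful of classes, the verification is a finite check: for a representative of each class one estimates the union of the maximal solvable subgroups containing it and confirms that it does not exceed $|G|/2$, that is, $180$, $252$ and $168$ for $A_6$, $PSL(2,8)$ and $PSL(3,2)\rtimes\mathbb{Z}_2$ respectively.

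The main obstacle is obtaining a sufficiently tight bound on the union $\bigcup_{M} M$ for the \emph{worst} element, since the naive estimate $\sum_{M} |M|$ overcounts badly and the solvable subgroups through a given involution or element of small prime order can overlap in complicated ways. In practice this is cleanest to settle by a direct computation of the solvabilizer orders across conjugacy classes (for example in GAP), the substantive point being that even the largest solvabilizer stays at or below half the group order. It is worth emphasising that this is a genuine restriction and not automatic: for $A_5$ one has $\deg(\mathcal{NS}_{A_5}) = \{24,36,50\}$, so the smallest degree $24$ lies below $|A_5|/2 = 30$ and Theorem \ref{hamiltonian-3} does not apply. It is precisely because the three listed groups are large enough that the bound $|\Sol_G(x)|\le |G|/2$ is comfortably met.
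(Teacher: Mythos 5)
Your proposal is correct and takes essentially the same route as the paper: the paper's proof is exactly a one-line application of Theorem \ref{hamiltonian-3} together with the (stated without further detail, implicitly computational) fact that $|\Sol_G(x)|\leq \frac{|G|}{2}$ for all $x \in G\setminus \Sol(G)$ when $G$ is one of the three listed groups. Your additional material---trivial solvable radicals, the reduction of $\Sol_G(x)$ to a union of maximal solvable subgroups, the GAP verification, and the observation that $A_5$ fails the hypothesis---simply makes explicit the finite check that the paper leaves implicit.
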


\begin{proof}
The result follows from Theorem \ref{hamiltonian-3} using the fact that   $|\Sol_G(x)|\leq \frac{|G|}{2}$ for all $x \in G\setminus \Sol(G)$ where $G = PSL(3,2) \rtimes \mathbb{Z}_2, A_6$ and $PSL(2,8)$.
\end{proof}

The following result shows that there is a group $G$   with  $|\Sol_G(x)|> |G|/2$ for some $x \in G\setminus \Sol(G)$ such that $\mathcal{NS}_G$ is hamiltonian.

\begin{prop} 
The non-solvable graph of $A_5$ is hamiltonian.
\end{prop}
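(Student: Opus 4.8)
The plan is to read off the full degree sequence of $\mathcal{NS}_{A_5}$ and then invoke Chv\'atal's degree-sequence criterion for hamiltonicity in place of Dirac's theorem, since (as the discussion preceding the statement points out) the minimum degree is too small for Dirac to apply here.

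First I would record that $A_5$ is simple and non-solvable, so its solvable radical is trivial, i.e. $\Sol(A_5)=\{1\}$; hence $\mathcal{NS}_{A_5}$ has $n=|A_5|-1=59$ vertices and $\deg(x)=60-|\Sol_{A_5}(x)|$ for every vertex $x$. Because every proper subgroup of $A_5$ is solvable (the maximal subgroups $A_4$, $D_{10}$ and $S_3$ are all solvable), two vertices $x,y$ are adjacent precisely when $\langle x,y\rangle=A_5$, and $\Sol_{A_5}(x)=\{g\in A_5:\langle x,g\rangle\ne A_5\}$ is exactly the union of the maximal subgroups containing $x$. A short incidence count over the three nontrivial conjugacy classes of $A_5$ then gives the solvabilizer orders: $|\Sol_{A_5}(x)|=36$ for the $15$ involutions, $|\Sol_{A_5}(x)|=24$ for the $20$ elements of order $3$, and $|\Sol_{A_5}(x)|=10$ for the $24$ elements of order $5$. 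Correspondingly the degrees are $24$, $36$ and $50$ with multiplicities $15$, $20$ and $24$, consistent with the fact $\deg(\mathcal{NS}_{A_5})=\{24,36,50\}$ recorded earlier.

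With the sorted degree sequence $d_1\le\cdots\le d_{59}$ in hand, namely $d_1=\cdots=d_{15}=24$, $d_{16}=\cdots=d_{35}=36$ and $d_{36}=\cdots=d_{59}=50$, I would verify Chv\'atal's condition, that $d_i>i$ for every index $i$ with $1\le i<n/2=29.5$. Indeed, if $1\le i\le 15$ then $d_i=24>15\ge i$, and if $16\le i\le 29$ then $d_i=36>29\ge i$. Thus no index $i<n/2$ satisfies $d_i\le i$, so Chv\'atal's theorem (see \cite{bon}) applies and $\mathcal{NS}_{A_5}$ is hamiltonian.

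The only nontrivial step, and hence the main (though routine) obstacle, is the determination of the degree multiplicities, i.e. the solvabilizer orders on each conjugacy class; this rests entirely on the well-known subgroup lattice of $A_5$, namely that its maximal subgroups are the five copies of $A_4$, the six copies of $D_{10}$ and the ten copies of $S_3$, together with the incidences of each conjugacy class with these subgroups. Once the three values $36$, $24$ and $10$ are confirmed, the verification of Chv\'atal's inequalities is immediate, and it is precisely the scarcity of minimum-degree vertices (only $15$ of them, fewer than their common degree $24$) that rescues hamiltonicity even though Dirac's hypothesis fails.
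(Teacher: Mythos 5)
Your proof is correct, but it takes a genuinely different route from the paper. The paper proves this proposition by brute force: it exhibits an explicit Hamiltonian cycle through all $59$ vertices of $\mathcal{NS}_{A_5}$ (a long chain of adjacencies, in practice verified by computation). You instead determine the full degree sequence and invoke Chv\'atal's degree-sequence criterion. Your class-by-class solvabilizer computation checks out: for a $5$-cycle $x$, $\Sol_{A_5}(x)$ is exactly the normalizer $D_{10}$ of $\langle x\rangle$, giving $|\Sol_{A_5}(x)|=10$; for a $3$-cycle it is the union of two point-stabilizer copies of $A_4$ and one $S_3$, giving $24$; and for an involution it has size $36$ (here the cleanest way to see this is that any two involutions generate a dihedral, hence solvable, group, so all $15$ involutions lie in the solvabilizer, along with the identity, $8$ elements of order $5$ from two $D_{10}$'s, and $12$ elements of order $3$). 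With multiplicities $15$, $20$, $24$ matching the conjugacy class sizes, the sorted sequence satisfies $d_i>i$ for all $i<59/2$, so Chv\'atal's theorem (which is in \cite{bon}, alongside the Dirac theorem the paper cites elsewhere) applies vacuously. Your approach buys a conceptual, hand-checkable argument that also explains \emph{why} hamiltonicity survives the failure of Dirac's hypothesis (too few low-degree vertices), whereas the paper's explicit cycle buys certainty without any group-theoretic input beyond the $59$ pairwise adjacency checks, but is essentially unverifiable without a computer and gives no insight. The only part of your write-up that would need to be fleshed out for a self-contained proof is the incidence count giving the three solvabilizer orders, but the facts you invoke (the maximal subgroups of $A_5$ and their intersections with the conjugacy classes) are standard and correct.
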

\begin{proof}
For any two vertex $a$ and $b$ we write $a \sim b$ if  $a$ is adjacent to $b$. It can be  verified that

 $(1,5,4,3,2)\sim(1,3)(2,5)\sim(2,3,4)\sim(1,4)(3,5)\sim(2,5,4)\sim(1,2)(3,4)\sim(1,5,4)\sim(2,5)(3,4)
\sim(1,3,5)\sim(1,4)(2,5)\sim(2,4,3)\sim(1,3)(4,5)\sim(1,2,5)\sim(1,4)(2,3)\sim(3,5,4)\sim(1,5)(2,4)\sim(1,2,3)\sim(1,5)(3,4)\sim(2,3,5)\sim(1,4,2)\sim(2,3)(4,5)\sim(1,5,2)\sim(2,4)(3,5)\sim(1,4,5)\sim(1,2)(3,5)\sim(1,3,4)\sim(1,2)(4,5)\sim(1,5,3)\sim(1,4,2,5,3)\sim(1,3,2)\sim(3,4,5)\sim(1,3)(2,4)\sim(2,5,3)\sim(1,2,4)\sim(1,5)(2,3)\sim(2,4,5)\sim(1,4,3)\sim(1,3,5,2,4)\sim(1,4,5,3,2)\sim(1,2,3,4,5)\sim(1,2,4,3,5)
\sim(1,5,3,2,4)\sim(1,4,5,2,3)\sim(1,5,4,2,3)\sim(1,3,4,5,2)\sim(1,5,3,4,2)$ $\sim
(1,3,2,4,5) \, \sim \, (1,3,2,5,4) \, \sim \, (1,2,4,5,3) \, \sim \, (1,2,5,4,3) \, \sim \, (1,5,2,3,4)\sim(1,2,3,5,4)\sim(1,4,3,2,5) \sim(1,4,3,5,2)\sim(1,3,4,2,5)\sim(1,4,2,3,5)\sim(1,5,2,4,3)$ $\sim(1,3,5,4,2)\sim(1,2,5,3,4)\sim(1,5,4,3,2)$ 

\noindent is a hamiltonian cycle of $\mathcal{NS}_{A_5}$. Hence, $\mathcal{NS}_{A_5}$ is hamiltonian.
\end{proof}

We conclude this section with the following question.

\begin{ques}
 Is  $\mathcal{NS}_G$  hamiltonian for any finite non-solvable group $G$?
\end{ques}

\section{Domination number and vertex connectivity}
 For a graph $\Gamma$ and a subset $S$ of the vertex set $V(\Gamma)$ we write $N_\Gamma[S] = S \cup (\cup_{x \in S}\nbd(x))$.
 %, denote by $N_\Gamma[S]$ the set of vertices in $\Gamma$ which are in $S$ or adjacent to a vertex in $S$. 
 If $N_\Gamma[S] = V(\Gamma)$ then $S$ is said to be a \textit{dominating set} of $\Gamma$.  The \textit{domination number} of  $\Gamma$, denoted by $\lambda(\Gamma)$, is the minimum cardinality of  dominating sets of  $\Gamma$. In this section, we shall obtain a few results regarding $\lambda(\mathcal{NS}_G)$.

\begin{prop}\label{domi1}
 Let $G$ be a finite non-solvable group. Then $\lambda(\mathcal{NS}_G) \neq 1$.
\end{prop}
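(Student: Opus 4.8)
The plan is to prove $\lambda(\mathcal{NS}_G) \neq 1$ by contradiction: suppose some single vertex dominates the entire non-solvable graph. A dominating set of size one means there is a vertex $x \in G \setminus \Sol(G)$ such that every other vertex of $\mathcal{NS}_G$ is adjacent to $x$. In terms of solvabilizers, this says $\deg(x) = |G| - |\Sol_G(x)| = |G| - |\Sol(G)| - 1$, i.e. every element of $G \setminus \Sol(G)$ other than $x$ generates a non-solvable subgroup with $x$. Equivalently, $\Sol_G(x) = \Sol(G) \cup \{x\}$, so $x$ is the only vertex that is \emph{not} adjacent to itself being counted, and $\Sol_G(x)$ consists only of $\Sol(G)$ together with $x$ itself.

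The cleanest route is to invoke the upper bound from \eqref{1}, namely $\deg(x) \leq |G| - |\Sol(G)| - 2$ for every vertex $x$. If $x$ were a dominating vertex, then $x$ would be adjacent to all of the remaining $|G| - |\Sol(G)| - 1$ vertices, forcing $\deg(x) = |G| - |\Sol(G)| - 1$. This directly contradicts the inequality $\deg(x) \leq |G| - |\Sol(G)| - 2$. Thus no dominating vertex can exist, and $\lambda(\mathcal{NS}_G) \geq 2$, which gives $\lambda(\mathcal{NS}_G) \neq 1$.

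First I would translate the domination condition into the degree statement: a set $\{x\}$ dominates $\mathcal{NS}_G$ precisely when $N_{\mathcal{NS}_G}[\{x\}] = V(\mathcal{NS}_G)$, which means $\nbd(x)$ contains every vertex except $x$, hence $\deg(x) = |V(\mathcal{NS}_G)| - 1 = |G| - |\Sol(G)| - 1$. Second I would simply cite the established upper bound in \eqref{1} to derive the contradiction. The argument is short, and the only subtlety is making sure the size of the vertex set is correctly recorded as $|G| - |\Sol(G)|$, so that a dominating vertex would need degree one less than this total.

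I expect no serious obstacle here, since the heavy lifting—the upper bound $\deg(x) \leq |G| - |\Sol(G)| - 2$ from Hai-Reuven—is already available from \eqref{1}. The main point to be careful about is the bookkeeping on vertex counts and the precise reformulation of ``dominating set of size one'' in graph-theoretic terms; once that is pinned down, the contradiction with the degree bound is immediate. An alternative, more self-contained argument would avoid \eqref{1} entirely by using Theorem \ref{thomson}: given any vertex $x$, there is a vertex $s$ non-adjacent to $x$ (indeed $\Sol_G(x)$ strictly exceeds $\Sol(G) \cup \{x\}$ because solvabilizers are large), so $\{x\}$ cannot dominate; but the bound \eqref{1} makes the shortest proof.
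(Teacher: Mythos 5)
Your main argument is correct, and it is a genuinely different route from the paper's. You observe that a dominating singleton $\{x\}$ forces $\deg(x)=|V(\mathcal{NS}_G)|-1=|G|-|\Sol(G)|-1$, contradicting the upper bound $\deg(x)\le |G|-|\Sol(G)|-2$ of \eqref{1}; since the paper quotes \eqref{1} from \cite{hr} and relies on it elsewhere (e.g.\ in Theorem \ref{new-bounds}), this two-line derivation is legitimate within the paper's framework. Be aware, though, of what it really amounts to: the upper bound in \eqref{1} is \emph{equivalent} to the proposition (a vertex of degree $|V(\mathcal{NS}_G)|-1$ exists if and only if $\lambda(\mathcal{NS}_G)=1$), so your proof is essentially a citation, with all the mathematical content outsourced to \cite{hr}. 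The paper instead gives a self-contained group-theoretic proof that in effect reproves this bound: if $\{x\}$ dominates, then $x$ cannot be adjacent to $xz$ for nontrivial $z\in\Sol(G)$, nor to $x^{-1}$, forcing $|\Sol(G)|=1$ and $o(x)=2$; then $\Sol_G(x)=\langle x\rangle$ forces $\langle x\rangle$ to be a Sylow $2$-subgroup, which must be unique (hence normal) because two Sylow subgroups of order $2$ generate a solvable dihedral group, and normality then puts $x$ in the center, hence in $\Sol(G)$, a contradiction. Your route buys brevity; the paper's buys independence from \cite{hr}. One caution: your closing ``alternative'' via Theorem \ref{thomson} does not work as stated --- that theorem produces a common \emph{neighbor} $s$ of two given vertices, not a non-neighbor of $x$, and your parenthetical claim that $\Sol_G(x)$ strictly exceeds $\Sol(G)\cup\{x\}$ is precisely the statement being proved, so that sketch is circular; drop it or replace it with the paper's Sylow argument.
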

\begin{proof} 
Let $\{x\}$ be a dominating set for $\mathcal{NS}_G$. If $\Sol(G)$ contains a non-trivial element $z$ then $xz$ is  adjacent to $x$, a contradiction. Hence,  $|\Sol(G)| = 1$. 

If $o(x) \ne 2$ then $x$ is   adjacent to $x^{-1}$, which is a contradiction. Hence, 
%Also if $x^2 \neq 1$, then $x^{-1}$ is not adjacent to $x$, a contradiction. Therefore, 
$o(x) = 2$ and so $x\in P_2$, for some Sylow $2$-subgroup $P_2$ of $G$. Since $|\Sol(G)| = 1$ and $x$ is adjacent to all vertices of $\mathcal{NS}_G$ we have $\Sol_G(x) =\langle x\rangle$. Also, $P_2\subseteq \Sol_G(x)$ and so $P_2=\langle x\rangle$. If $Q_{2}$ is another Sylow $2$-subgroup of $G$ then $|Q_2|=2$ and so $\langle P_2, Q_2\rangle$ is a dihedral group and hence solvable. That is, $x$ is not adjacent to $y\in Q_2$, $y\neq 1$, which is a contradiction. Thus it follows that $P_2$ is normal in $G$. Let $g \in G\setminus P_2$. Then $gxg^{-1}=x$, that is $xg=gx$ and so $x\in Z(G)$, which is a contradiction. Hence, the result follows.
%Thus $\lambda(\mathcal{NS}_G)=1$ is not possible. 
\end{proof}

Using GAP \cite{gap} it can be seen that $\lambda(A_5) = \lambda(S_5) = 4$. In fact $\{(3,4,5), (1,2,3,4,5)$, $(1,2,4,5,3), (1,5)(2,4)\}$ and $\{(4,5), (1,2)(3,4,5), (1,3)(2,4,5), (1,5)(2,4)\}$ are dominating sets for $A_5$ and $S_5$ respectively.
%\begin{rem} By GAP\cite{gap}, the domination number of the non-nilpotent graph of $A_5\cong PSL(2,4), S_6, SL(2,5), PSL(3,2)$ and $GL(2,4)$ is 4. 
%
%$\{(3,4,5), (1,2,3,4,5), (1,2,4,5,3), (1,5)(2,4)\}$ is a dominating set for $A_5$. 
%
%$\{(4,5), (1,2)(3,4,5), (1,3)(2,4,5), (1,5)(2,4)\}$ is a dominating set for $S_5$.
%
%$\{(4,5)(6,7), (1,2,4,3,6,7,5), (1,2,6,7,5,3,4), (1,7,4,5,2,6,3)\}$ is a dominating set for $PSL(2,3)$.
%\end{rem}
At this point we would like to ask the following question.
%\begin{ques}
% Is Proposition \ref{domi1} holds for infinite groups?
%\end{ques}

\begin{ques}
 Is there any finite non-solvable group $G$ such that $\lambda(\mathcal{NS}_G)=2,3$?
\end{ques}

\begin{prop} 
Let $G$ be a non-solvable group. Then a subset $S$ of $V(\mathcal{NS}_G)$ is a dominating set if and only if $\Sol_G(S) \subset \Sol(G)\cup S$.
\end{prop}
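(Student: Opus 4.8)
The plan is to unwind the definition of a dominating set directly, the only preliminary being to fix the meaning of $\Sol_G(S)$ as the intersection $\bigcap_{x\in S}\Sol_G(x)=\{g\in G:\langle g,x\rangle\text{ is solvable for all }x\in S\}$. With this reading, the entire statement is a translation between the combinatorial domination condition and a containment of solvabilizers, so I expect no genuinely hard step: the work is bookkeeping about which vertices fail to be dominated and why.

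First I would analyze, for a fixed $v\in V(\mathcal{NS}_G)=G\setminus\Sol(G)$, exactly when $v$ is \emph{not} dominated by $S$. By the definition of $N_\Gamma[S]$, the vertex $v$ is undominated precisely when $v\notin S$ and $v\notin\nbd(x)$ for every $x\in S$, i.e.\ $v$ is adjacent to no element of $S$. Since $v\notin S$ forces $v\neq x$ for all $x\in S$, the distinct-vertex clause in the adjacency definition is automatic, so ``$v$ adjacent to no $x\in S$'' is equivalent to ``$\langle v,x\rangle$ is solvable for all $x\in S$'', that is, $v\in\Sol_G(S)$. Hence the set of undominated vertices is exactly $\bigl(\Sol_G(S)\cap(G\setminus\Sol(G))\bigr)\setminus S$, and $S$ is a dominating set if and only if this set is empty, i.e.
\[
\Sol_G(S)\cap(G\setminus\Sol(G))\subseteq S.
\]

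It then remains to show this inclusion is equivalent to the stated condition $\Sol_G(S)\subseteq\Sol(G)\cup S$. For the forward direction I would split $\Sol_G(S)$ as the disjoint union of $\Sol_G(S)\cap\Sol(G)$ and $\Sol_G(S)\cap(G\setminus\Sol(G))$; the first piece lies in $\Sol(G)$ and the second in $S$ by the displayed inclusion, giving $\Sol_G(S)\subseteq\Sol(G)\cup S$. Conversely, intersecting $\Sol_G(S)\subseteq\Sol(G)\cup S$ with $G\setminus\Sol(G)$ and using $S\subseteq V(\mathcal{NS}_G)=G\setminus\Sol(G)$ recovers the displayed inclusion. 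The only point deserving a moment's care, which I would flag as the (minor) obstacle, is the combination of the $v\notin S$ clause with the observation that $\Sol(G)\subseteq\Sol_G(S)$ (every element of the solvable radical generates a solvable subgroup with any element); this is what makes the $\Sol(G)$ part of $\Sol_G(S)$ harmless and throws the whole burden of domination onto the complement $G\setminus\Sol(G)$, explaining why $\Sol(G)\cup S$ is the right right-hand side.
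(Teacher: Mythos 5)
Your proof is correct and is essentially the same argument as the paper's: both simply unwind the definition of domination into the statement that a vertex outside $\Sol(G)\cup S$ fails to lie in $\Sol_G(S)$, the only cosmetic difference being that you phrase it as one biconditional chain (characterizing the undominated vertices as $\bigl(\Sol_G(S)\cap(G\setminus\Sol(G))\bigr)\setminus S$) while the paper argues the two implications separately by contraposition. No gap; your extra remark that $\Sol(G)\subseteq\Sol_G(S)$ is not needed for the equivalence but correctly explains why the right-hand side must include $\Sol(G)$.
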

\begin{proof}
Suppose $S$ is a dominating set. If $a \not \in \Sol(G)\cup S$ then, by definition of dominating set, there exists $x \in S$  such that $\langle x,a \rangle$ is not solvable. Thus $a \not \in \Sol_G(S)$. It follows that $\Sol_G(S) \subset S \cup \Sol(G)$. 

Now assume that $\Sol_G(S) \subset \Sol(G) \cup S$. If $a \not \in \Sol(G) \cup S$, then by hypothesis, $a \not \in \Sol_G(S)$. Therefore, $a$ is adjacent to at least one element of $S$. This completes the proof.
\end{proof}

The vertex connectivity of a connected graph $\Gamma$, denoted by $\kappa(\Gamma )$,  is defined as the smallest number of vertices whose removal makes the graph disconnected. A subset $S$ of the vertices of a connected graph $\Gamma$ is called a vertex cut set, if $\Gamma \setminus S$ is not  connected but $\Gamma \setminus H$ is connected for any proper subset $H$ of $S$.  We conclude this section with the following result on vertex cut set and vertex connectivity of $\mathcal{NS}_G$.

\begin{prop} 
Let $G$ be a finite non-solvable group and let $S$ be a vertex cut set of $\mathcal{NS}_G$. Then $S$ is a union of cosets of $\Sol(G)$. In particular $\kappa(\mathcal{NS}_G)=t|\Sol(G)|$, where $t > 1$ is an integer.
\end{prop}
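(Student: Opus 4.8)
The plan is to exploit the fact, immediate from Lemma~\ref{sol}, that multiplication by $\Sol(G)$ leaves a vertex's neighbourhood unchanged: for every $x\in G\setminus\Sol(G)$ and every $u\in\Sol(G)$ one has $\nbd(xu)=\nbd(x)$, and moreover $x$ and $xu$ are non-adjacent, since $\langle x,xu\rangle=\langle x,u\rangle$ is solvable. Thus the vertices of each coset $x\Sol(G)$ are pairwise non-adjacent twins with a common neighbourhood. The whole statement rests on the principle that such a twin can always be used to reroute a path, so a minimal separating set cannot split a coset.

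First I would prove that $S$ is a union of cosets. Fix $x\in S$ and suppose, for contradiction, that $xu\notin S$ for some $u\in\Sol(G)$. Since $S$ is a vertex cut set, $H:=S\setminus\{x\}$ is a proper subset, so $\mathcal{NS}_G\setminus H$ is connected while $\mathcal{NS}_G\setminus S$ is not. Choosing $p,q$ in two distinct components of $\mathcal{NS}_G\setminus S$, a simple $p$--$q$ path in the connected graph $\mathcal{NS}_G\setminus H$ must pass through $x$ exactly once, say as $\dots-p'-x-q'-\dots$, where $p',q'\in\nbd(x)$ lie in $V(\mathcal{NS}_G)\setminus S$ and, via the two $x$-avoiding subpaths, belong to the components of $p$ and of $q$ respectively. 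Because $\nbd(xu)=\nbd(x)$ and $xu\in V(\mathcal{NS}_G)\setminus S$ is distinct from $p',q'$ (it is non-adjacent to $x$, whereas $p',q'$ are), the walk $p'-xu-q'$ lies entirely in $\mathcal{NS}_G\setminus S$ and reconnects the two components, a contradiction. Hence $x\Sol(G)\subseteq S$, and as $x$ was arbitrary, $S$ is a union of cosets of $\Sol(G)$. Writing $t$ for the number of these cosets gives $|S|=t|\Sol(G)|$; since the vertex connectivity is attained by some minimal cut set, $\kappa(\mathcal{NS}_G)=t|\Sol(G)|$ for an integer $t\ge 1$.

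It remains to rule out $t=1$. Suppose $S=a\Sol(G)$ is a single coset. Since $\langle a,y\rangle=\langle a^{-1},y\rangle$ for every $y$, the inverse $a^{-1}$ satisfies $\nbd(a^{-1})=\nbd(a)$. If $a^{-1}\notin S$, equivalently $a^{2}\notin\Sol(G)$, then $a^{-1}\in V(\mathcal{NS}_G)\setminus S$ is a twin of every vertex of $S$; because each vertex of a minimal cut set has a neighbour in every component of $\mathcal{NS}_G\setminus S$, the vertex $a^{-1}$ inherits a neighbour in every component and so joins them all inside $\mathcal{NS}_G\setminus S$, contradicting that this graph is disconnected. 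This settles every case except $a^{2}\in\Sol(G)$.

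The main obstacle is exactly that remaining case, where $a\Sol(G)$ is an involution in the quotient and the inverse trick collapses. By the twin structure above together with Lemmas~\ref{sol} and~\ref{solo}, $\mathcal{NS}_G$ is the blow-up of $\mathcal{NS}_{G/\Sol(G)}$ obtained by replacing each vertex with an independent set of size $|\Sol(G)|$ and joining two such sets completely precisely when the corresponding vertices are adjacent; hence a single-coset cut set of $\mathcal{NS}_G$ corresponds to a cut vertex of $\mathcal{NS}_{G/\Sol(G)}$, and it suffices to show that $\mathcal{NS}_H$ has no cut vertex whenever $\Sol(H)=1$. I expect to obtain this by strengthening Theorem~\ref{thomson}: instead of a single common neighbour, I would show that any two distinct vertices of $\mathcal{NS}_H$ have at least two common neighbours, using the degree bound $\deg\ge 6$ from \eqref{1} and the closure of neighbourhoods under inversion. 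For $\bar x,\bar y$ on opposite sides of a putative cut vertex $\bar a$, any common neighbour is forced to equal $\bar a$, so a second common neighbour yields a path around $\bar a$ and the desired contradiction, whence $t>1$.
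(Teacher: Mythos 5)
Your first two steps are sound and essentially track the paper's own argument: the union-of-cosets claim is proved there with the same twin idea (every neighbour of $a$ is a neighbour of $az$ by Lemma \ref{sol}, so $az$ cannot survive outside $S$), and your inverse trick for the case $a^{2}\notin\Sol(G)$ plays the role of the paper's coprime-power trick $b\mapsto b^{i}$. The problem is the last case, $a^{2}\in\Sol(G)$, i.e.\ when $a\Sol(G)$ is an involution of $G/\Sol(G)$: there your proof stops being a proof. You make everything rest on the claim that any two distinct vertices of $\mathcal{NS}_H$ (with $\Sol(H)=1$) have at least two common neighbours, and you offer only the tools ``degree $\geq 6$'' and ``closure of neighbourhoods under inversion''. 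These cannot close the case. Inversion (or coprime powers) produces a second common neighbour only when some common neighbour has order greater than $2$; but in your situation the unique common neighbour of two vertices separated by the cut vertex $\bar a$ is forced to be $\bar a$ itself, which your own reduction has already forced to be an involution, so inversion gives nothing new. The bound $\deg\geq 6$ says nothing about the intersection of two neighbourhoods: in $\mathcal{NS}_{A_5}$ two involutions each have only $24$ neighbours among $59$ vertices, so counting alone cannot produce even one common neighbour, let alone two. Thus the case that carries all the difficulty is exactly the one left open.

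The paper closes that case differently, and its closing step needs genuine group theory rather than graph counting. With $S=b\Sol(G)$ a minimum cut, take $x'$ and $y'$ in two distinct components of $\mathcal{NS}_G\setminus S$; by Theorem \ref{thomson} they have a common neighbour, which must lie in $S$ (a common neighbour outside $S$ would sit in both components), hence equals $bz$ for some $z\in\Sol(G)$, and then Lemma \ref{sol} makes both $x'$ and $y'$ adjacent to $b$. Since $x',y'$ were arbitrary, $b$ is adjacent to every vertex outside $S$, so $\{b\Sol(G)\}$ is a dominating set of $\mathcal{NS}_{G/\Sol(G)}$; this contradicts Proposition \ref{domi1}, whose proof is where the real work happens (Sylow $2$-subgroups, dihedral subgroups, normality, the centre). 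If you want to finish along your own lines, this is also the repair: apply Theorem \ref{thomson} across two components to conclude that the putative cut vertex $\bar a$ dominates $\mathcal{NS}_{G/\Sol(G)}$, then invoke Proposition \ref{domi1}; no strengthened ``two common neighbours'' theorem is needed, and none of the tools you cite would prove one.
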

\begin{proof}
 Let $a \in S$. Then there exist two distinct components $G_1$, $G_2$ of $\mathcal{NS}_G \setminus S$ and two vertices $x \in V(G_1)$, $y \in V(G_2)$ such that $a$ is adjacent to both $x$ and $y$. By Lemma \ref{sol}, $x$ and $y$ are also adjacent to $az$ for any $z \in \Sol(G)$, and so $a\Sol(G) \subset S$. Thus $S$ is a union of cosets of $\Sol(G)$. Hence, $\kappa(\mathcal{NS}_G)= t|\Sol(G)|$, where $t \geq 1$ is an integer.

Suppose that $|S| = \kappa(\mathcal{NS}_G)$. It follows from the first part that $\kappa(\mathcal{NS}_G) = t |\Sol(G)|$ for some integer $t \geq 1$. If $t = 1$ then $S = b\Sol(G)$ for some element $b \in G\setminus \Sol(G)$. Therefore, there exist two distinct components $G_1$, $G_2$ of $\mathcal{NS}_G \setminus S$ and $r\in V(G_1)$,  $s\in V(G_2)$ such that $b$ is adjacent to both $r$ and $s$. In other words, $\langle b, r\rangle$ and  $\langle b,s\rangle$ are not solvable.
% Here, $\langle r, s \rangle$ is solvable. Thus $\langle r,rb \rangle$ is not solvable. 
Suppose that $o(b) \ne 2$. Then the number of integers less than $o(b)$ and relatively prime to it is greater  or equal to $2$.  
%Also $\circ(b)=2$, for if not then $\phi(\circ(b))\geq 2$. 
Let $1 \ne i\in \mathbb{N}$ such that $\gcd(i, o(b)) = 1$. Then 
\[
\langle b^i, r\rangle = \langle b,r\rangle \text{ and } \langle b^i, s\rangle=\langle b,s\rangle. 
\]
Therefore, $b^i$ is adjacent to both $r$ and $s$. This is a contradiction since $b^i \notin b\Sol(G)$. Hence, $o(b) = 2$.

Suppose $x' \in V(G_1)$ and $y' \in V(G_2)$ are adjacent to $bz$ for some $z \in \Sol(G)$. Then, by Lemma \ref{sol} (c),   $b$ is adjacent to $x'$ and $y'$. Again, by Lemma \ref{solo},  $x'\Sol(G)$ and $y'\Sol(G)$ are adjacent to $b\Sol(G)$ in the graph $\mathcal{NS}_{G/\Sol(G)}$. That is, $g\Sol(G)$ and $b\Sol(G)$ are adjacent for all $g\Sol(G) \in V(\mathcal{NS}_{G/\Sol(G)})$. Therefore, $\{b\Sol(G)\}$ is a dominating set of $\mathcal{NS}_{G/\Sol(G)}$ and so $\lambda(\mathcal{NS}_{G/\Sol(G)}) = 1$. Hence, the result follows in view of Proposition \ref{domi1}.
%Suppose $|\Sol(G)|=n$. Let $S=\{b, bz_2,\dots, bz_n \}$, where $\Sol(G)=\{1=z_1,z_2,\dots, z_n \}$ is a dominating set for $\mathcal{NS}_G$. Now $b^2=1$ otherwise $b^2z \not\in \nbd(S)$. Let $B=\{1, b\}$ is a subgroup of order $2$, Now if $B_1$ is another subgroup if order $2$ in $G\setminus \{\Sol(G)\cup S \}$ where $B_1=\{1,x \}$ for $x\in G\setminus \{\Sol(G)\cup S \}$, then $x\not\in \nbd(S)$ as we know if $o(x)=2$ and $o(y)=2$ then $\langle x,y\rangle$ is solvable, hence it is a contradiction to the fact that $S$ is a dominating set. If $\Sol(G)$ has a subgroup of order $2$ viz. $B_2=\{1, z_i\}$ then $x^{-1}bx = z_i$ where $i\in \{2, 3, \dots, n\}$ and $x\in G\setminus (\Sol(G)\cup S)$, which implies $b=xz_ix^{-1}\in \Sol(G)$ as because $\Sol(G)$ is a normal subgroups of $G$, a contradiction. If $B_3$ is a subgroup in $S$ having order $2$ viz. $B_3=\{1, bz_i\}$ where $i\in \{2,3,\dots, n\}$ then either $z_i^{-1}bz_i=b$ which gives $o(z_i)=2$ or $z_i^{-1}bz_i=bz_i$ then $z_i=1$ which is a contradiction to the fact that $\Sol(G)$ does not have a subgroup which has order $2$. Therefore $B$ is unique hence it is normal. For $x\in G$, $x^{-1}bx=b$ which gives $b\in Z(G)\subseteq \Sol(G)$, which is a contradiction. 
\end{proof}

%\begin{prob} In the above proposition, is it true that $t>1$?
%\end{prob}
%\begin{proof}
%{\bf NOT COMPLETE}
%Suppose that $|S| = \kappa(\mathcal{S}_G)$. It follows from the first part that $\kappa(\mathcal{S}_G) = t |\Sol(G)|$ for some integer $t \geq 1$. If $t = 1$, then $S = bSol(G)$ for some element $b \in G\setminus \Sol(G)$. Then there exist two distinct components $G_1$ and $G_2$ of $\mathcal{S}_G \setminus S$ and $r\in G_1$ and $s\in G_2$ such that $b$ is adjacent to both $r$ and $s$. Here, $\langle r,s \rangle$ is nilpotent. Thus $\langle r,rb \rangle$ is not nilpotent. Also $\circ(b)=2$, for if not then $\phi(\circ(b))\geq 2$. Let $i\in \mathbb{N}
%, n>1, gcd(i,\circ(b))=1$. Then $\langle b^i, r\rangle=\langle b,r\rangle$ and $\langle b^i, s\rangle=\langle b,s\rangle$, that is $b^i$ is adjacent to both $r$ and $s$, which is a contradiction.
%
%Case 1: $|Z(G)|=1$. In this case we have $S=\{b\}$. Let $x \in G_1$. Now, $x, s\in G\setminus \Sol(G)$, and so by Theorem \ref{exist}, there $y \in G$, such that $x$ and $s$ are adjacent to $y$. But since $x \in G_1$ and $s\in G_2$, we have $y=b$. Thus $\{b\}$ is a dominating set of $\mathcal{S}_G$, and by Proposition \ref{domi1}, we get a contradiction. Thus $|Z(G)|\geq 1$.
%
%Case 2: $|\Sol(G)|=2$. Same as in case 1, one can see that $S=\{b,bz\}$, where $\Sol(G)=\{1,z\}$ is a dominating set for $\mathcal{S}_G$. 
%\end{proof}

\section{Independence Number}
%Paul Erd\"{o}s, posed the following problem in 1975\cite{neu}: Let $G$ be a group whose non-commuting graph has no infinite clique. Is it true that the clique number of $\Gamma(G)$ is finite? Neumann \cite{neu} answered positively Erd\"{o}s question. The dual question of Erd\"{o}s on non-solvable graphs may be posed as the following.

 A subset $X$ of the vertices of a graph $\Gamma$ is called an \textit{independent set} if the induced subgraph on $X$ has no edges. The maximum size of an independent set in a graph $\Gamma$ is called the \textit{independence number} of  $\Gamma$ and it is denoted by $\alpha(\Gamma)$. In this section we consider the following question.  
\begin{ques}\label{indno-1} 
Suppose $G$ is a  non-solvable group such that $\mathcal{NS}_G$  has no infinite independent set. Is it true that 
%the  independence number of $\mathcal{S}_G$, 
$\alpha(\mathcal{NS}_G)$ is finite?
\end{ques}
It is worth mentioning that Question \ref{indno-1} is  similar to \cite[Question 2.10]{Ab06} and \cite[Question 3.17]{ns17} where Abdollahi et al. and Nongsiang et al. considered non-commuting and non-nilpotent graphs of finite groups respectively. Note that the group considered in \cite{ns17} in order to answer \cite[Question 3.17]{ns17} negatively, also gives negative answer to Question \ref{indno-1}. However, the next theorem gives affirmative answer to Question \ref{indno-1} for some classes of groups.

%The answer is no in general. Consider the disjoint union of cyclic groups of odd order $\mathfrak{C}:=\{\mathbb{Z}/p\mathbb{Z}\mid p\text{ an odd prime}\}$. By a theorem of A. Yu. Ol'shanskii (see \cite[Theorem 35.1]{ol's}), there exists an odd integer $n$ for which this collection of groups can be embedded into a countable 2-generated simple group $G$ such that every proper subgroup of $G$ either is cyclic of order dividing $n$ or is conjugate to a subgroup of a group from $\mathfrak{C}$.

%Clearly $G$ is non-solvable, but every proper subgroup of $G$ is cyclic of finite odd prime order. Since each group from $\mathfrak{C}$ is a subgroup of $G$, one sees that there is no bound on the orders of the cyclic subgroups.

%We show in the next theorem that the answer to the above question is yes in some cases.

%\begin{prob} Is it true that for any group $G$, $x\in G\setminus \Sol(G)$, then $xSol(G)$ is an independent set of $\mathcal{S}_G$? If it is, then we have the following theorem.
%\end{prob}

\begin{thm}\label{indno-2} 
Let $G$ be a non-solvable group such that $\mathcal{NS}_G$ has no infinite independent sets. If $\Sol(G)$ is a subgroup and $G$ is an Engel, locally finite, locally solvable, a linear group or a $2$-group then $G$ is a finite group. In particular $\alpha(\mathcal{NS}_G)$ is finite. 
\end{thm}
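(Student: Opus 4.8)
The plan is to show that under each of the stated hypotheses, the assumption that $\mathcal{NS}_G$ has no infinite independent set forces $G$ to be finite; once $G$ is finite, $\alpha(\mathcal{NS}_G)$ is trivially finite since $V(\mathcal{NS}_G) = G \setminus \Sol(G)$ is itself a finite set. The unifying idea is a \emph{contrapositive} argument: if $G$ were infinite, then in each of the listed classes we can locate an infinite subset of $G \setminus \Sol(G)$ whose pairwise-generated subgroups are all solvable, i.e.\ an infinite independent set in $\mathcal{NS}_G$, contradicting the hypothesis. Since $\Sol(G)$ is assumed to be a subgroup here, the quotient $G/\Sol(G)$ is a well-defined group, and by Lemma \ref{solo} the graph $\mathcal{NS}_{G/\Sol(G)}$ captures the essential solvability structure; I would work as much as possible with the non-solvable quotient $\bar{G} = G/\Sol(G)$, which has trivial solvable radical.

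The first step is to reduce to $\bar G$ and record the key structural fact: because $\Sol(\bar G)$ is trivial and $\bar G$ is non-solvable, any infinite locally solvable or infinite abelian-by-type subset in $\bar G$ lifts back (via Lemma \ref{solo}, pulling cosets apart) to an infinite independent set in $\mathcal{NS}_G$. I would then treat the classes by invoking the appropriate finiteness/local structure theorem for each: for a \textbf{locally finite} group, if $G$ is infinite it contains infinite abelian (hence solvable) subgroups by well-known results in locally finite group theory; a \textbf{locally solvable} infinite group contains an infinite solvable, hence an infinite independent, subset directly; for a \textbf{linear} group, the Tits alternative together with local solvability/finiteness considerations forces either a free subgroup (which one must handle) or an infinite solvable subgroup; for an \textbf{Engel} group one uses that Engel conditions push toward local nilpotence, yielding infinite nilpotent (hence solvable) subsets; and for an infinite \textbf{$2$-group}, local finiteness of $2$-groups (or the containment of infinite abelian subgroups) again produces the needed infinite independent set. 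In each case the conclusion is: \emph{infinite} $+$ \emph{hypothesis} $\Rightarrow$ \emph{infinite independent set}, which is excluded, so $G$ is finite.

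The main obstacle I anticipate is the \textbf{linear group} case, where the Tits alternative only guarantees that $G$ either contains a nonabelian free subgroup or is virtually solvable. A nonabelian free subgroup need not immediately furnish an infinite \emph{independent} set in $\mathcal{NS}_G$, since two generators of a free group generate a non-solvable (indeed free) subgroup and hence are \emph{adjacent}, not independent. I would resolve this by passing to a large abelian or solvable subgroup produced inside the free part (e.g.\ the infinite cyclic subgroup generated by a single element, or the solvable radical closure), and carefully checking that infinitely many of its coset representatives remain pairwise non-adjacent in $\mathcal{NS}_G$; controlling the interplay between free subgroups and the solvabilizer structure is where the real care is needed. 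The other classes follow the same template but rest on standard local-finiteness or local-nilpotence theorems, so the essential content of the proof is organizing these invocations around the single lifting principle from Lemma \ref{solo} and the triviality of $\Sol(\bar G)$.
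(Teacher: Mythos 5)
The paper actually offers no argument of its own here: its entire ``proof'' is the remark that the theorem and its proof are similar to \cite[Theorem 2.11]{Ab06} and \cite[Theorem 3.18]{ns17}. Your contrapositive plan (infinite $+$ class hypothesis $\Rightarrow$ infinite independent set) is indeed the strategy of those cited proofs, so your outline matches the intended route. However, your execution has a genuine gap at its very first step. You propose to pass to $\bar{G}=G/\Sol(G)$ and transfer independence via Lemma \ref{solo}, but that lemma is stated and proved only for \emph{finite} groups: its proof uses that $\Sol(G)$ is the solvable radical (a finite-group fact from \cite{gkps}) and \cite[Lemma 2.11 (3)]{hr}. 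For an infinite group, the hypothesis that $\Sol(G)$ is a subgroup does not make $\Sol(G)$ solvable (it is merely a group all of whose $2$-generated subgroups are solvable), so the direction your lifting needs --- $\langle x\Sol(G),y\Sol(G)\rangle$ solvable $\Rightarrow$ $\langle x,y\rangle$ solvable --- is unavailable, and the reduction to $\bar{G}$ is unjustified. The argument must be run inside $G$ itself.

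The second, more serious gap is that you never confront the requirement that an independent set must consist of \emph{vertices}, i.e.\ elements outside $\Sol(G)$. An infinite abelian or solvable subgroup $H$ (from Hall--Kulatilaka, Tits, etc.) is pairwise non-adjacent, but it may be contained in $\Sol(G)$ and then contributes no vertices at all. What is needed is a dichotomy: either $H\setminus \Sol(G)$ is infinite, and you are done; or $\Sol(G)$ is infinite, in which case for any vertex $x$ the coset $x\Sol(G)$ is an infinite independent set of vertices, because $\langle xu,xv\rangle=\langle xu,u^{-1}v\rangle$ with $u^{-1}v\in\Sol(G)$, which is solvable by the very definition of $\Sol(G)$ --- an argument that uses only the subgroup hypothesis and no finiteness. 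Without this, your assertion that ``a locally solvable infinite group contains an infinite solvable, hence an infinite independent, subset directly'' is false as written: in a locally solvable group every $\langle x,y\rangle$ is solvable, so $\Sol(G)=G$ and $\mathcal{NS}_G$ has no vertices whatsoever. (This degenerate case, realized e.g.\ by an infinite direct sum of finite solvable groups of unbounded derived length, in fact shows the statement needs $\Sol(G)\neq G$ as a hypothesis --- a defect that the paper's bare citation does not confront either.) Finally, two factual slips: $2$-groups are \emph{not} locally finite in general (the Grigorchuk group is an infinite, finitely generated $2$-group), so that case requires Held's theorem that an infinite $2$-group contains an infinite abelian subgroup; and your anticipated ``main obstacle'' in the linear case is not one --- a nonabelian free subgroup contains an infinite cyclic, hence abelian, subgroup, which feeds directly into the dichotomy above.
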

%\begin{proof} If $x \in G\setminus \Sol(G)$, then $xSol(G)$ is an independent set of $G$. Thus $\Sol(G)$ is finite. Therefore, the hypothesis is equivalent to the following: Every solvable subgroup of $G$ is finite and thus every abelian subgroup of $G$ is finite. In particular, every cyclic subgroup of $G$ is finite, so $G$ is periodic. We know that an infinite locally finite group or an infinite 2-group contains an infinite abelian subgroup (see \cite[Theorem 14.3.7]{rob}), and also every periodic locally solvable or linear group is locally finite. Thus in these case $G$ is finite. Now if $G$ is Engel group, since every abelian subgroup of $G$ is finite, a result of Plotkin \cite[Corollary, p. 55]{rob1}, implies that $G$ is a finite nilpotent group. This completes the proof. 
%\end{proof}
Note that Theorem \ref{indno-2} and its proof are similar to \cite[Theorem 2.11]{Ab06} and \cite[Theorem 3.18]{ns17}.

\begin{prop}\label{inde}
Let $G$ be a group. Then for every maximal independent set $S$ of $\mathcal{NS}_G$ we have 
$$\underset{x\in S}\cap \Sol_G(x)=S\cup \Sol(G).$$
\end{prop}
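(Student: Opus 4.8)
The plan is to establish the set equality by proving the two inclusions separately, writing $T := \bigcap_{x\in S}\Sol_G(x)$ throughout. The forward inclusion $S\cup\Sol(G)\subseteq T$ will use only that $S$ is independent, while the reverse inclusion $T\subseteq S\cup\Sol(G)$ is exactly where the maximality of $S$ is needed.

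First I would dispose of $S\cup\Sol(G)\subseteq T$. Since $\Sol(G)=\bigcap_{x\in G}\Sol_G(x)$ and $S\subseteq G$, we immediately get $\Sol(G)\subseteq\bigcap_{x\in S}\Sol_G(x)=T$. For the inclusion $S\subseteq T$, fix $s\in S$; for each $x\in S$ with $x\neq s$, independence of $S$ forces $\langle s,x\rangle$ to be solvable, so $s\in\Sol_G(x)$, while for $x=s$ the group $\langle s\rangle$ is cyclic, hence solvable, giving $s\in\Sol_G(s)$. Thus $s\in\Sol_G(x)$ for every $x\in S$, i.e. $s\in T$, and the forward inclusion follows.

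The heart of the argument is the reverse inclusion $T\subseteq S\cup\Sol(G)$. Let $g\in T$, so that $\langle g,x\rangle$ is solvable for all $x\in S$, and suppose $g\notin\Sol(G)$; the goal is to show $g\in S$. Because the vertex set of $\mathcal{NS}_G$ is $G\setminus\Sol(G)$, the element $g$ is a genuine vertex. If $g\notin S$, then by maximality of $S$ the set $S\cup\{g\}$ is no longer independent; since $S$ has no internal edges, any edge of the induced subgraph on $S\cup\{g\}$ must be incident to $g$, so there is some $x\in S$ with $\langle g,x\rangle$ not solvable. This contradicts $g\in T$. Hence $g\in S$, and therefore $T\subseteq S\cup\Sol(G)$, completing the proof.

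The main (and essentially only) obstacle is exploiting maximality correctly: one must recognize that extending $S$ by a new vertex $g$ can only introduce edges incident to $g$, and that such an edge is precisely a certificate $\langle g,x\rangle$ not solvable, which is the negation of $g\in\Sol_G(x)$. I expect no further difficulty; in particular no finiteness hypothesis on $G$ is invoked, and the existence of a maximal independent set need not be addressed since the statement is quantified over every such $S$.
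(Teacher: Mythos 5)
Your proof is correct and follows essentially the same route as the paper: the paper's (very terse) proof cites exactly the two ingredients you develop, namely that $S\cup\Sol(G)\subseteq\Sol_G(x)$ for every $x\in S$ (your forward inclusion, from independence) and that maximality of $S$ forces any vertex outside $S\cup\Sol(G)$ to be adjacent to some element of $S$ (your reverse inclusion). Your write-up simply supplies the details the paper leaves implicit, including the observation that $\langle s\rangle$ is solvable so $s\in\Sol_G(s)$.
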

\begin{proof} 
The result follows from the fact that $S$ is maximal and $S\cup \Sol(G)\subset \Sol_G(x)$ for all $x \in S$. 
%We have $S\cup \Sol(G)\subset \Sol_G(x)$ for all $x \in S$. Since $S$ is maximal, we have 
%$$\underset{x\in S}\bigcap \Sol_G(x)=S\cup \Sol(G).$$
\end{proof}

\begin{rem} 
 Let $R=\{(3,4,5),(1,4)(3,5),(2,5,3)\}\subset A_5$. Then $R$ is an independent set of $\mathcal{NS}_{A_5}$  and $\langle R\rangle \cong A_5$. This shows that a subgroup generated by an independent set may not be solvable. Also there exist a maximal independent set $S$, such that $R\subseteq S$. Since the edge set of $\mathcal{NS}_{A_5}$ is non-empty, we have $S\neq A_5\setminus \Sol(A_5)$, showing that $S\cup \Sol(A_5)$ is not a subgroup of $A_5$. Thus for a finite non-solvable group $G$ and a maximal independent set $S$ of $\mathcal{NS}_G$, $S\cup \Sol(G)$ need not be a subgroup of $G$.
\end{rem}

%\begin{ques} 
%Find the independence number of some finite group.
%\end{ques}
We conclude this section with the following result.
\begin{thm}\label{fingen}
The order of a finite non-solvable group $G$ is bounded by a function of the independence number of its non-solvable graph. Consequently, given a non-negative integer $k$,  there are at the most finitely many finite non-solvable groups whose non-solvable graphs  have independence number $k$.
\end{thm}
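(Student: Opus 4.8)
The plan is to bound $|G|$ by controlling, for each prime $p$ dividing $|G|$, both $p$ itself and the $p$-part of $|G|$, and then multiplying these bounds over the (necessarily bounded) set of prime divisors. The entire argument rests on one observation: every solvable subgroup of $G$ yields an independent set of $\mathcal{NS}_G$. Indeed, if $H\le G$ is solvable and $x,y\in H$, then $\langle x,y\rangle\le H$ is solvable, so $x$ and $y$ are non-adjacent; hence $H\setminus\Sol(G)$ is an independent set and $|H|-|H\cap\Sol(G)|\le\alpha$, where $\alpha:=\alpha(\mathcal{NS}_G)$ (finite, as $G$ is finite).

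First I would bound $\Sol(G)$ itself. Choosing any $x\in G\setminus\Sol(G)$, which exists since $G$ is non-solvable, the cyclic group $\langle x\rangle$ is solvable, so Lemma \ref{sol}(b) applied with $y=x$ gives that $\langle xu,xv\rangle$ is solvable for all $u,v\in\Sol(G)$. Thus the whole coset $x\Sol(G)\subseteq G\setminus\Sol(G)$ is an independent set of size $|\Sol(G)|$, giving $|\Sol(G)|\le\alpha$.

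The crucial step is then to apply the observation to Sylow subgroups. For each prime $p\mid|G|$, a Sylow $p$-subgroup $P$ is a $p$-group, hence nilpotent, hence solvable; combining the two estimates above gives $|P|\le|P\setminus\Sol(G)|+|P\cap\Sol(G)|\le\alpha+|\Sol(G)|\le2\alpha$. Since $p\le|P|\le2\alpha$, every prime divisor of $|G|$ is at most $2\alpha$, so $|G|$ has at most $\pi(2\alpha)$ distinct prime divisors (with $\pi$ the prime-counting function), each contributing a $p$-part of size at most $2\alpha$. Therefore $|G|=\prod_{p\mid|G|}|P_p|\le(2\alpha)^{\pi(2\alpha)}$, which is a function of $\alpha$ alone. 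The finiteness consequence is then immediate: for fixed $k$, any finite non-solvable group with $\alpha(\mathcal{NS}_G)=k$ has order at most $(2k)^{\pi(2k)}$, and there are only finitely many groups of bounded order.

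I expect the main obstacle to be conceptual rather than computational: recognizing that it suffices to control Sylow subgroups, because their solvability turns them into independent sets, and that bounding every prime-power divisor simultaneously bounds both the primes involved and their multiplicities. Once this is seen, all the estimates are elementary and, pleasantly, require no classification of finite simple groups.
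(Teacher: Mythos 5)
Your proposal is correct and follows essentially the same route as the paper's own proof: coset-based bounding of $|\Sol(G)|$ via Lemma \ref{sol}, observing that Sylow subgroups are solvable and hence yield independent sets, bounding every prime divisor and every $p$-part of $|G|$, and multiplying. The only differences are in the constants (the paper uses two cosets $x\Sol(G)\cup x^2\Sol(G)$ to get $|\Sol(G)|\le k/2$ and concludes $|G|\le\left(\tfrac{3k}{2}\right)^{3k/4}$, whereas you use one coset and conclude $|G|\le(2k)^{\pi(2k)}$), which are immaterial to the statement.
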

\begin{proof} 
Let $x \in G$ such that $x,x^2 \not \in \Sol(G)$. Then $x\Sol(G)\cup x^2\Sol(G)$ is an independent set of $\mathcal{NS}_G$. Thus $|\Sol(G)|\leq \frac{k}{2}$. Let $P$ be a Sylow subgroup of $G$, then $P$ is solvable. Thus it follows that $P\setminus \Sol(G)$ is an independent set of $G$. Hence $|P\setminus \Sol(G)|\leq k$, that is $|P|\leq \frac{3k}{2}$. Since, the number of primes less  or equal to $\frac{3k}{2}$ is at most $\frac{3k}{4}$, we have $|G|\leq (\frac{3k}{2})^{\frac{3k}{4}}$. This completes the proof.
\end{proof}

%\begin{prop}
% For any non-weakly solvable group $G$, girth of $\mathcal{S}_G$ is 3
%\end{prop}
%\begin{proof}
%For every edge $\{x,y\}$ of $\mathcal{S}_G$, $\{x, y,xy\}$ is a triangle. Hence the girth of $\mathcal{S}_G$ is 3.
%\end{proof}

%\begin{ques} Is the set $\Sol(G)$ always a subgroup of $G$?
%\end{ques}

%\begin{ques} Is the term weakly solvable in literature?
%\end{ques}

%\begin{ques} To characterize $\mathcal{S}_G$ in term of its minimum degree $\delta(\mathcal{S}_G)$ and maximum degree $\Delta(\mathcal{S}_G)$.
%\end{ques}

%\begin{prop}(\cite[Lemma 3.2]{hr})Let G be a non-solvable group. Then $\delta(\mathcal{S}_G) > 5$.
%\end{prop}

%\begin{prob} To characterize when $\delta(\mathcal{S}_G) =6,7,8,9,10,etc$
%\end{prob}

%Note that the group $A_5,SL(2,5),\mathbb{Z}_2\times A_5,GL(2,4),\mathbb{Z}_4\times A_5, SL(2,5)\rtimes \mathbb{Z}_2,\mathbb{Z}_2\times SL(2,5),\mathbb{Z}_2\times \mathbb{Z}_2\times A_5,$ the central product of $\mathbb{Z}_8$ and $A_5$(idgroup(480,221)) etc. all satisfies the above conjecture.

\section{Clique number of non-solvable graphs}\label{S:cli}

A subset of the vertex set of a graph $\Gamma$ is called a \textit{clique} of $\Gamma$ if it consists entirely of pairwise adjacent vertices. The least upper bound of the sizes of all the cliques of $\Gamma$ is called the \textit{clique number} of $\Gamma$, and is denoted by $\omega (\Gamma)$. Note that $\omega (\tilde{\Gamma}) \leq \omega (\Gamma)$ for any subgraph $\tilde{\Gamma}$ of $\Gamma$.
%The  \textit{chromatic number}  of a graph $\Gamma$, written $\chi(\Gamma)$, is the minimum number of colors needed to label the vertices so that adjacent vertices receive different colors. Clearly, $\omega (\Gamma) \leq \chi(\Gamma)$.

\begin{prop} 
Let $G$ be a finite non-solvable group.
\begin{enumerate}
\item [\rm(a)] If $H$ is a non-solvable subgroup of $G$ then $\omega(\mathcal{NS}_H)\leq \omega(\mathcal{NS}_G)$.
\item [\rm(b)] If $\frac{G}{N}$ is non-solvable then $\omega(\mathcal{NS}_{\frac{G}{N}}) \leq \omega(\mathcal{NS}_G)$. The equality holds when $N = \Sol(G)$.
\end{enumerate}	
\end{prop}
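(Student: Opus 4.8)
The plan is to prove each part by exhibiting an explicit injection from a maximum clique of the smaller graph into the vertex set of the larger, and checking that adjacency is preserved.

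For part (a), the key observation is that $\mathcal{NS}_H$ is essentially an induced subgraph of $\mathcal{NS}_G$. Concretely, if $x, y \in H$ generate a non-solvable subgroup $\langle x, y \rangle$ inside $H$, then since $H \leq G$ the same subgroup $\langle x, y \rangle$ is non-solvable as a subgroup of $G$, so $x$ and $y$ are adjacent in $\mathcal{NS}_G$ as well. One subtlety to address is the vertex sets: a vertex of $\mathcal{NS}_H$ lies in $H \setminus \Sol(H)$, and I must verify that such an element is genuinely a vertex of $\mathcal{NS}_G$, i.e.\ that it lies in $G \setminus \Sol(G)$. This is immediate, because any element of a maximum clique of $\mathcal{NS}_H$ has positive degree and hence generates a non-solvable subgroup with some other element of $H \leq G$, forcing it out of $\Sol(G)$. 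Thus a clique of $\mathcal{NS}_H$ maps to a clique of the same size in $\mathcal{NS}_G$, giving $\omega(\mathcal{NS}_H) \leq \omega(\mathcal{NS}_G)$ directly from the fact that $\omega(\tilde\Gamma) \leq \omega(\Gamma)$ for subgraphs, as noted at the start of this section.

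For part (b), the inequality $\omega(\mathcal{NS}_{G/N}) \leq \omega(\mathcal{NS}_G)$ follows from a lifting argument. Given a clique $\{x_1 N, \dots, x_m N\}$ in $\mathcal{NS}_{G/N}$, choose coset representatives $x_1, \dots, x_m \in G$. The point is that if $\langle x_i N, x_j N \rangle$ is non-solvable in $G/N$, then $\langle x_i, x_j \rangle$ cannot be solvable in $G$, since the image of a solvable group under the quotient homomorphism is solvable. Hence the lifted representatives $\{x_1, \dots, x_m\}$ are pairwise adjacent in $\mathcal{NS}_G$ (they are also distinct and non-central in the relevant sense, as distinct cosets force distinct representatives, and non-solvability of a generated subgroup keeps them out of $\Sol(G)$). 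This produces a clique of size $m$ in $\mathcal{NS}_G$, proving the inequality.

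The equality case when $N = \Sol(G)$ is where I would concentrate the real work, and I expect it to be the main obstacle. The $\leq$ direction is already covered; for the reverse, I would invoke Lemma \ref{solo}, which asserts that $\langle x \Sol(G), y \Sol(G) \rangle$ is solvable if and only if $\langle x, y \rangle$ is solvable. This is precisely the compatibility needed to push a clique \emph{down} the quotient map: given a clique $\{x_1, \dots, x_m\}$ in $\mathcal{NS}_G$, the cosets $x_i \Sol(G)$ are pairwise adjacent in $\mathcal{NS}_{G/\Sol(G)}$ by Lemma \ref{solo}. The delicate step is to confirm that these cosets are all \emph{distinct}, so that no collapse in size occurs: if $x_i \Sol(G) = x_j \Sol(G)$ with $i \neq j$, then $x_i$ and $x_j$ lie in the same coset, whence Lemma \ref{sol} would force $\langle x_i, x_j \rangle$ to be solvable, contradicting that $x_i$ and $x_j$ are adjacent in $\mathcal{NS}_G$. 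With distinctness established, the image is a clique of size $m$ in $\mathcal{NS}_{G/\Sol(G)}$, giving $\omega(\mathcal{NS}_G) \leq \omega(\mathcal{NS}_{G/\Sol(G)})$ and hence equality.
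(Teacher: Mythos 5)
Your proposal is correct and follows essentially the same route as the paper: part (a) via the subgraph observation, part (b) by lifting coset representatives through the quotient map using that homomorphic images of solvable groups are solvable (a fact the paper re-derives by hand with an explicit subnormal series, where you simply invoke it), and the equality case via Lemma \ref{solo}. Your extra check that distinct clique elements of $\mathcal{NS}_G$ land in distinct cosets of $\Sol(G)$ (using Lemma \ref{sol}) is a detail the paper glosses over, but the underlying argument is the same.
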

\begin{proof}
Part (a) follows from the fact that $\mathcal{NS}_H$ is a subgraph of $\mathcal{NS}_G$.
%(a) $\mathcal{NS}_H$ is a subgraph of $\mathcal{NS}_G$ and so $\omega(\mathcal{NS}_H)\leq \omega(\mathcal{NS}_G)$.
For part (b), we shall show that $\mathcal{NS}_{\frac{G}{N}}$ is isomorphic to a subgraph of $\mathcal{NS}_G$.

 Let $V(\mathcal{NS}_{\frac{G}{N}})=\{x_1N,x_2N,\dots,x_nN\}$ and $K=\{x_1,x_2,\dots,x_n\}$. Then, for $x_iN \in V(\mathcal{NS}_{\frac{G}{N}})$,  there exist $x_jN \in V(\mathcal{NS}_{\frac{G}{N}})$  such that $\langle x_iN,x_jN\rangle$ is not solvable.
  Let $H = \langle x_i, x_j\rangle$. Then
\[
 \langle x_iN, x_jN \rangle = \frac{H N}{N}.
\]
%
%If  $\langle x_i, x_j\rangle$ is solvable then $\frac{\langle x_i, x_j\rangle}{N}$ is also solvable. That is, $ \langle x_iN, x_jN \rangle$  is solvable, which is a contradiction. Hence, $\langle x_i, x_j\rangle$ is not solvable.
% We will show that $HN$ is solvable. Since $H$ is solvable, 
Suppose $H$ is solvable. Then there exists a sub-normal series $\{1\} = H_0 \leq H_1 \leq H_2 \leq \cdots \leq H_n = H$, where $H_i$ is normal in $H_{i+1}$ and $\frac{H_{i+1}}{H_i}$ is abelian for all $i = 0, 1, \dots, n - 1$. Consider the series $N = H_0N \leq H_1N \leq \cdots \leq H_nN = HN$. We have $H_iN$ is normal in $H_{i+1}N$, for if $an\in H_iN$ and $bm\in H_{i+1}N$ then $bman(bm)^{-1} \in H_iN$. Also, $\frac{H_{i+1}N}{H_iN}$ is abelian, for if $a,b \in \frac{H_{i+1}N}{H_iN}$ then $a = kn_1(H_iN) = k(H_iN)$ and $b=ln_2(H_iN) = l(H_iN)$. Therefore, $ab=kl(H_iN)$. Since $H_{i+1}/H_i$ is abelian, we have $kH_ilH_i = lH_ikH_i$, that is $klH_i = lkH_i$. Thus 
\[
ab = kl(H_iN) = (klH_i)N = (lkH_i)N = lk(H_iN) = ba.
\]
Therefore, $\frac{H_{i+1}N}{H_iN}$ is abelian. Hence, $HN$ is solvable and so $HN/N = \langle x_iN,x_jN \rangle$ is also solvable; which is a contradiction. Therefore, $H$ is non-solvable.
% Thus $H$ is solvable implies $HN/N$ is solvable and hence $\langle x_iN,x_jN \rangle$ is solvable. 
%That is, if $\langle x_iN,x_jN \rangle$ is non-solvable then $H$ is also non-solvable.
%This shows that $H$ is also non-solvable if $\langle x_iN,x_jN \rangle$ is non-solvable. 
%NOTE:- $\langle x_iN,x_jN \rangle$ solvable does not imply $H$ solvable. For example if in the group $G=A_5\times A_5$, take $N=A_5\times \{e\}$. Then $N$ is normal in $G$ and if $x,y \in N$, with $\langle x,y\rangle$, not solvable, then $\langle xN,yN \rangle$ is solvable, where as $\langle x,y \rangle$ is not solvable.
Let $L$ be a graph such that  $V(L)= K$ and two vertices $x, y$ of $L$ are adjacent if and only if $xN$ and $yN$ are adjacent in $\mathcal{NS}_{\frac{G}{N}}$. Then $L$ is a subgraph of $\mathcal{NS}_G[K]$ and hence a subgraph of $\mathcal{NS}_G$. 
Define a map $\phi: V(\mathcal{NS}_{\frac{G}{N}}) \to V(L)$ by $\phi(x_iN)=x_i$. Then $\phi$ is one-one and onto. Also two vertices $x_iN$ and $x_jN$ are adjacent in $\mathcal{NS}_{\frac{G}{N}}$ if and only if $x_i$ and $x_j$ are adjacent in $L$. Thus $\mathcal{NS}_{\frac{G}{N}} \cong L$. 
%Hence  $\omega(\mathcal{S}_{\frac{G}{N}}) = \omega(\mathcal{S}_G)$.

If $N = \Sol(G)$ then, by Lemma \ref{solo}, it follows that $\{x_1\Sol(G), x_2\Sol(G),\dots$, $x_t\Sol(G)\}$ is a clique of $\mathcal{NS}_\frac{G}{\Sol(G)}$ if and only if $\{x_1, x_2, \dots, x_t\}$ is a clique of $\mathcal{NS}_G$. Hence,  $\omega(\mathcal{NS}_\frac{G}{\Sol(G)})= \omega(\mathcal{NS}_G)$.
\end{proof}

%\begin{prop}\label{clsol}
% For any  finite non-solvable group $G$, $\omega(\mathcal{NS}_\frac{G}{\Sol(G)})= \omega(\mathcal{NS}_G)$.
%\end{prop}
%\begin{proof} Let $K=\{x_1Sol(G),x_2Sol(G),\dots,x_nSol(G)\}\subset V(\mathcal{NS}_\frac{G}{\Sol(G)})$. Let $C = \{x_1, x_2$, $\dots,x_n\}$. We will show that $K$ is a clique of $\mathcal{NS}_\frac{G}{\Sol(G)}$ if and only if $C$ is a clique of $\mathcal{NS}_G$. By Lemma \ref{solo}, we have $\langle x_iSol(G),x_jSol(G) \rangle$ is not solvable if and only if $\langle x_i,x_j \rangle$ is not solvable. Thus $K$ is a clique of $\mathcal{NS}_\frac{G}{\Sol(G)}$ if and only if $C$ is a clique of $\mathcal{NS}_G$. Hence $\omega(\mathcal{NS}_\frac{G}{\Sol(G)})= \omega(\mathcal{NS}_G)$.
%\end{proof}

\begin{thm} 
For any non-solvable group $G$ and a solvable group $S$ we have 
\[
\omega(\mathcal{NS}_G)=\omega(\mathcal{NS}_{G\times S}).
\]
\end{thm}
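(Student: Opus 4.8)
The plan is to prove the two inequalities $\omega(\mathcal{NS}_G) \leq \omega(\mathcal{NS}_{G\times S})$ and $\omega(\mathcal{NS}_{G\times S}) \leq \omega(\mathcal{NS}_G)$ separately, the first by realizing $\mathcal{NS}_G$ as an induced subgraph of $\mathcal{NS}_{G\times S}$ and the second by projecting cliques onto the first coordinate. The crucial underlying fact, already extracted in the proof of the lemma establishing $|\deg(\mathcal{NS}_G)| = |\deg(\mathcal{NS}_{G\times S})|$, is that for $(x,s),(y,t)\in G\times S$ the subgroup $\langle (x,s),(y,t)\rangle$ is solvable if and only if $\langle x,y\rangle$ is solvable: one inclusion uses $\langle (x,s),(y,t)\rangle \leq \langle x,y\rangle \times \langle s,t\rangle$ together with solvability of $\langle s,t\rangle$, and the other uses that the first-coordinate projection maps $\langle (x,s),(y,t)\rangle$ onto $\langle x,y\rangle$, a homomorphic image of a solvable group. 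A consequence I would record first is $\Sol(G\times S) = \Sol(G)\times S$, so that $(x,s)$ is a vertex of $\mathcal{NS}_{G\times S}$ precisely when $x$ is a vertex of $\mathcal{NS}_G$.

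For the first inequality I would fix the identity $e$ of $S$ and consider the vertex set $(G\setminus\Sol(G))\times\{e\}$ of $\mathcal{NS}_{G\times S}$. By the equivalence above, $(x,e)$ and $(y,e)$ are adjacent in $\mathcal{NS}_{G\times S}$ if and only if $x$ and $y$ are adjacent in $\mathcal{NS}_G$, so the induced subgraph on this set is isomorphic to $\mathcal{NS}_G$. Since the clique number of a subgraph never exceeds that of the ambient graph (as noted at the start of this section), this gives $\omega(\mathcal{NS}_G)\leq \omega(\mathcal{NS}_{G\times S})$.

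For the reverse inequality I would take an arbitrary clique $\{(x_1,s_1),\dots,(x_m,s_m)\}$ of $\mathcal{NS}_{G\times S}$ and project it to the first coordinate. The one genuinely nontrivial point, which I expect to be the main obstacle, is showing that this projection is injective on the clique, i.e.\ that the $x_i$ are pairwise distinct. If $x_i=x_j$ for some $i\neq j$, then adjacency of $(x_i,s_i)$ and $(x_j,s_j)$ would force $\langle x_i,x_j\rangle=\langle x_i\rangle$ to be non-solvable; but a cyclic group is abelian, hence solvable, a contradiction. Granting distinctness, each $x_i$ lies in $G\setminus\Sol(G)$ and any two of them are adjacent in $\mathcal{NS}_G$ by the equivalence, so $\{x_1,\dots,x_m\}$ is a clique of $\mathcal{NS}_G$ of the same cardinality $m$. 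Thus every clique of $\mathcal{NS}_{G\times S}$ produces a clique of $\mathcal{NS}_G$ of equal size, yielding $\omega(\mathcal{NS}_{G\times S})\leq \omega(\mathcal{NS}_G)$ and hence equality. I would finally remark that no finiteness of $G$ or $S$ enters the argument, since it relies only on the solvability equivalence and on cyclic groups being solvable.
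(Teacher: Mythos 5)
Your proof is correct and follows essentially the same route as the paper: embed $\mathcal{NS}_G$ into $\mathcal{NS}_{G\times S}$ via $x \mapsto (x,e_s)$ for one inequality, and project cliques of $\mathcal{NS}_{G\times S}$ onto the first coordinate for the other. In fact your write-up is slightly more careful than the paper's, which asserts that the projected set $E=\{x : (x,s)\in D\}$ satisfies $|D|=|E|$ without justifying injectivity of the projection---precisely the point you settle by noting that two clique elements sharing a first coordinate would force a cyclic (hence solvable) group $\langle x\rangle$ to be non-solvable.
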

\begin{proof}
Suppose $C$ is a clique of $\mathcal{NS}_G$. Let $a,b \in C$, then $\langle a,b \rangle$ is not solvable. Now, $\langle (a,e_s),(b,e_s)\rangle \cong \langle a,b \rangle$, where $e_s$ is the identity element of $S$, and so $\langle (a,e_s),(b,e_s)\rangle$ is not solvable. Thus $C\times \{e_s\}$ is a clique of $\mathcal{NS}_{G\times S}$. Now suppose $D$ is a clique of $\mathcal{NS}_{G\times S}$. Let $(x,s_1),(y,s_2)\in D$, where $x \ne y$. Then $\langle (x,s_1),(y,s_2)\rangle \subseteq \langle x,y\rangle \times \langle s_1,s_2\rangle$. Since $S$ is solvable, we have $\langle s_1,s_2 \rangle$ is solvable. Since $\langle (x,s_1),(y,s_2)\rangle$ is not solvable, we have $\langle x,y \rangle$ is not solvable. Thus  $E=\{x : (x,s)\in D\}$ is a clique of $\mathcal{NS}_G$. Hence, the result follows noting that  $|D|=|E|$. 
\end{proof}
The following lemma is useful in obtaining a lower bound for $\omega(\mathcal{NS}_G)$.
\begin{lem} \label{order-lem}
 Let $G$ be a finite non-solvable group. Then there exists an element $x \in G\setminus \Sol(G)$ such that $o(x)$ is a prime greater or equal to $5$.
\end{lem}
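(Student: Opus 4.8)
The plan is to show that any finite non-solvable group $G$ contains an element of prime order $p \geq 5$ lying outside $\Sol(G)$. First I would reduce to the quotient $\overline{G} = G/\Sol(G)$, which is a non-trivial finite group with trivial solvable radical; in particular $\overline{G}$ is non-solvable and every minimal normal subgroup of $\overline{G}$ is a direct product of copies of a non-abelian simple group. So it suffices to produce an element of prime order $p \geq 5$ in $\overline{G}$, since its preimage (a suitable power lifting the order) will land outside $\Sol(G)$; more directly, an element $\overline{x}$ of prime order $p\ge 5$ in $\overline{G}$ has $\overline{x}\ne \overline{1}$, so any preimage $x$ satisfies $x\notin\Sol(G)$, and one can arrange $o(x)=p$ by replacing $x$ with an appropriate power.

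The heart of the argument is a fact about non-abelian finite simple groups: every such group has order divisible by a prime $p \geq 5$. I would invoke the classification-adjacent result (originally due to Burnside's $p^aq^b$ theorem together with the Feit--Thompson odd order theorem, or simply the well-known fact from the classification of simple groups) that the order of a non-abelian finite simple group is divisible by at least three distinct primes, and in fact is always divisible by some prime $\ge 5$. Concretely, the smallest non-abelian simple group is $A_5$ of order $60 = 2^2\cdot 3\cdot 5$, and no non-abelian simple group has order of the form $2^a 3^b$ (this is exactly Burnside's theorem, since such a group would be solvable). Hence there is a prime $p\ge 5$ dividing $|\overline{G}|$, and by Cauchy's theorem $\overline{G}$ contains an element of order $p$.

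Pulling this back, let $\overline{x}\in\overline{G}$ have order $p\ge 5$ and pick any $y\in G$ with $y\Sol(G)=\overline{x}$. Since $\overline{x}\ne\overline 1$ we have $y\notin\Sol(G)$. Writing $o(y)=p^k m$ with $\gcd(p,m)=1$, the element $x=y^{m}$ has order a power of $p$ and maps to a non-trivial power of $\overline{x}$, hence $x\notin\Sol(G)$; replacing $x$ by a further power of itself gives an element of exact order $p$ outside $\Sol(G)$. This produces the desired $x$.

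The main obstacle is justifying that a prime $\ge 5$ must divide $|\overline{G}|$: this is where the group-theoretic content lies, and it rests on Burnside's $p^aq^b$ theorem (a group of order $p^aq^b$ is solvable), applied to the non-solvable quotient $\overline G$. The clean way to package this is to note that $\overline{G}$ non-solvable forces $|\overline{G}|$ to be divisible by at least three distinct primes, and then to rule out $\{2,3\}$ being the only primes among them; if $|\overline G|$ were divisible only by primes $2$ and $3$ it would be solvable by Burnside, a contradiction, so a prime $p\ge5$ divides $|\overline G|$. The rest (Cauchy's theorem and the lifting of orders through $\Sol(G)$) is routine.
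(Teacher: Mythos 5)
Your strategy is the same as the paper's: Burnside's $p^aq^b$ theorem forces a prime $p\ge 5$ to divide $|G/\Sol(G)|$, Cauchy's theorem gives an element $\overline{x}$ of order $p$ in the quotient, and one then tries to transfer this back to $G$. The gap is in your final lifting step. From a preimage $y$ of $\overline{x}$ you correctly produce $x=y^m$ of order $p^k$ with $x\notin\Sol(G)$ (its image $\overline{x}^{\,m}$ is non-trivial). But the closing clause, that ``replacing $x$ by a further power of itself gives an element of exact order $p$ outside $\Sol(G)$,'' does not follow: if $k\ge 2$, then $x^p$ maps to $\overline{x}^{\,mp}=\overline{1}$, so $x^p\in\Sol(G)$; since $\langle x\rangle$ is cyclic of order $p^k$, its unique subgroup of order $p$ is $\langle x^{p^{k-1}}\rangle\le\langle x^p\rangle\le\Sol(G)$, so \emph{every} order-$p$ power of $x$ lies inside $\Sol(G)$. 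Nothing in your argument rules out $k\ge 2$; what you need — that some preimage of $\overline{x}$ has $p$-part of order exactly $p$, equivalently that $G$ has an element of order $p$ outside $\Sol(G)$ — is precisely the content of the lemma, so the argument becomes circular at exactly the decisive point.

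The case that genuinely is routine is when $p\nmid|\Sol(G)|$: then Lagrange prevents any element of order $p$ from lying in $\Sol(G)$, while Cauchy applied to $G$ itself provides one (alternatively, Schur--Zassenhaus splits the preimage of $\langle\overline{x}\rangle$ over $\Sol(G)$). The problematic situation is when every prime $p\ge 5$ dividing $|G/\Sol(G)|$ also divides $|\Sol(G)|$; there one must exclude, for instance, the possibility that a Sylow $p$-subgroup of $G$ is cyclic with its unique subgroup of order $p$ contained in $\Sol(G)$, and neither Cauchy nor a powering trick does this. You should know, however, that the paper's own proof suffers from the same defect: it assumes for contradiction that every $x\notin\Sol(G)$ has order of the form $2^{\alpha}3^{\beta}$, and the contradiction it reaches therefore only yields an element outside $\Sol(G)$ whose order is \emph{divisible} by a prime $\ge 5$, not an element whose order \emph{equals} such a prime. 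So your proposal is no weaker than the printed argument — both prove the weaker ``divisible by $p\ge5$'' statement (which, suitably reworked, suffices for the paper's later applications) — but as a proof of the lemma as stated, both leave the same final step unjustified.
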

\begin{proof}
Suppose that $1 \ne o(x) = 2^{\alpha}3^{\beta}$ for all $x \in G\setminus \Sol(G)$, where $\alpha$ and $\beta$ are non-zero integers. Then $|G/\Sol(G)| = 2^{m}3^{n}$ for some non-zero integers $m, n$. Therefore,  $G/\Sol(G)$ is solvable and so, by Lemma \ref{solo}, $G$ is solvable; a contradiction. This proves the existence of an element $x \in G\setminus \Sol(G)$ such that $o(x)$ is a prime greater or equal to $5$.  
\end{proof}
\begin{prop}\label{geq6}
 Let $G$ be a finite non-solvable group. Then $\omega(\mathcal{NS}_G)\geq 6$.
\end{prop}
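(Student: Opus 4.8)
The plan is to use the monotonicity of the clique number $\omega(\mathcal{NS}_G)$ under sections to reduce to a minimal simple group, where the clique condition becomes transparent, and then to produce a clique of size $6$ by a Sylow count. First I would reduce. By the equality in the previous Proposition, $\omega(\mathcal{NS}_G)=\omega(\mathcal{NS}_{G/\Sol(G)})$, so I may assume $\Sol(G)=1$. Choosing a subgroup $H\le G$ that is minimal among the non-solvable subgroups, part (a) of that Proposition gives $\omega(\mathcal{NS}_H)\le\omega(\mathcal{NS}_G)$; and applying the same equality to $H$ yields $\omega(\mathcal{NS}_H)=\omega(\mathcal{NS}_{H/\Sol(H)})$. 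Since every proper subgroup of $H$ is solvable, $M:=H/\Sol(H)$ is a \emph{minimal simple group}: it is non-abelian simple and all of its proper subgroups are solvable. Hence it suffices to prove $\omega(\mathcal{NS}_M)\ge 6$.

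This reduction pays off because in a minimal simple group every proper subgroup is solvable, so two distinct vertices $x,y$ of $\mathcal{NS}_M$ are adjacent exactly when $\langle x,y\rangle=M$. Thus a subset of $M\setminus\{1\}$ is a clique if and only if no two of its members lie in a common proper subgroup of $M$. By Lemma \ref{order-lem} there is a prime $p\ge 5$ dividing $|M|$; since $M$ is simple and non-abelian its Sylow $p$-subgroups are not normal, so $n_p\equiv 1\pmod p$ together with $n_p>1$ forces $n_p\ge p+1\ge 6$. I would then pick six distinct Sylow $p$-subgroups, take in each the cyclic subgroup of order $p$ and a generator $x_i$, and claim $\{x_1,\dots,x_6\}$ is a clique: if some $x_i,x_j$ lay in a common proper subgroup, that subgroup would sit inside a maximal subgroup $U$ of $M$ containing two distinct subgroups of order $p$, which a suitable choice of $p$ rules out.

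The crux, and the main obstacle, is this final adjacency verification, since it is genuinely not uniform: two elements of order $p$ taken from different Sylow subgroups need not generate $M$ (for example, two split semisimple elements of $PSL(2,q)$ can generate only a dihedral group). I would therefore invoke Thompson's classification of minimal simple groups --- the families $PSL(2,2^p)$, $PSL(2,3^p)$, $PSL(2,p)$, $Sz(2^p)$ and the single group $PSL(3,3)$ --- and in each family choose the prime $p\ge 5$ so that a Sylow $p$-subgroup is cyclic and contained in a unique maximal subgroup (a torus normalizer), which carries a unique subgroup of order $p$. With this choice distinct Sylow $p$-subgroups cannot share a proper subgroup, so the six generators pairwise generate $M$, giving the desired clique; the finitely many small base cases, such as $PSL(3,3)$ and $Sz(8)$, can be checked directly or with GAP. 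Assembling these per-family verifications, and confirming $n_p\ge 6$ in each, is the technical heart of the argument.
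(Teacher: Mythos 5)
Your reduction is valid as far as it goes: parts (a) and (b) of the preceding Proposition do give $\omega(\mathcal{NS}_M)\le\omega(\mathcal{NS}_G)$ for the minimal simple section $M=H/\Sol(H)$, and in $M$ two vertices are adjacent exactly when they generate $M$. The genuine gap is the step you yourself call the technical heart and defer to a family-by-family check: that one can always choose a prime $p\ge 5$ so that generators of six distinct Sylow $p$-subgroups are pairwise adjacent. This is not merely left unverified; as formulated it fails for $PSL(2,8)$, which belongs to the family $PSL(2,2^p)$ and is not among the base cases you flag. Here $|PSL(2,8)|=504=2^3\cdot 3^2\cdot 7$, so the only admissible prime is $7$, and $7$ divides $8-1$: every element of order $7$ lies in a Borel subgroup $2^3{:}7$ (as well as in a dihedral group of order $14$, which is harmless). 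Each Sylow $7$-subgroup is a split torus fixing exactly two of the nine points of the projective line, each Borel contains eight Sylow $7$-subgroups, and two distinct Sylow $7$-subgroups lie in a common Borel precisely when their fixed-point pairs meet. Hence a set of order-$7$ elements pairwise generating $PSL(2,8)$ corresponds to pairwise disjoint $2$-subsets of a $9$-element set and so has at most $4$ elements. For this group no ``suitable choice of $p$'' exists at all: any $6$-clique must mix element orders, so your construction cannot produce it and the proposal offers no alternative. (For the other families the plan can be made to work, e.g.\ via primes dividing $q+1$ or the defining characteristic, but none of that verification is actually carried out either.)

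The machinery is also disproportionate to the statement, which is why the paper needs none of it. The paper's proof: by Lemma \ref{order-lem} pick $x\in G\setminus\Sol(G)$ of prime order $p\ge 5$, let $y$ be any neighbour of $x$ (one exists because $x\notin\Sol(G)$), and check directly that $\{x,\,y,\,xy,\,x^2y,\,x^3y,\,x^4y\}$ is a clique: the six elements are distinct, a pair involving $x$ generates $\langle x,y\rangle$ outright, and for $0\le i<j\le 4$ the subgroup $\langle x^iy,x^jy\rangle$ contains $(x^jy)(x^iy)^{-1}=x^{j-i}$, hence $x$ (as $0<j-i<p$), hence $y$, hence the non-solvable group $\langle x,y\rangle$. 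This translation trick is uniform over all finite non-solvable groups and avoids Thompson's classification entirely; applied inside your $M$ it would also repair your argument, but then the whole reduction to minimal simple groups becomes superfluous.
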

\begin{proof}
%If for all $x\in G$, $\circ(x)$ is a prime greater or equal to 5, we have $x\in \Sol(G)$, then $|G/\Sol(G)|$ is a power of 2 and 3, and hence solvable and so $G$ is solvable, which is a contradiction. 
By Lemma \ref{order-lem}, we have an element $x \in G\setminus \Sol(G)$ such that $o(x)$ is a prime greater  or equal to $5$. Let $y \in G\setminus \Sol(G)$ such that $x$ is adjacent to $y$. Then $\{x, y, xy, x^2y, x^3y, x^4y\}$ is a clique of $\mathcal{NS}_G$ and so $\omega(\mathcal{NS}_G)\geq 6$. 
\end{proof}

The following program in GAP \cite{gap} shows that 
\[
\omega(\mathcal{NS}_{A_5}) = \omega(\mathcal{NS}_{SL(2,5)}) =  \omega(\mathcal{NS}_{\mathbb{Z}_2 \times A_5}) = 8 \text{ and } \omega(\mathcal{NS}_{S_5}) = 16.
\]
Note that $A_5 \, = \, \text{SmallGroup}(60, 5), \,\quad  SL(2,5) \, = \, \text{SmallGroup}(120, 5), \,\quad S_5=  $\\ $\text{SmallGroup}(120, 34)$ and $\mathbb{Z}_2 \times A_5 = \text{SmallGroup}(120, 35)$. Also $G/\Sol(G) \cong A_5$ for $G = A_5, SL(2,5)$ and $\mathbb{Z}_2 \times A_5$.

%The following programme work only for group of order less than or equal to 120.

\begin{verbatim}
LoadPackage("GRAPE");
sol:=[60,120];
for n in sol do
 allg:=AllSmallGroups(n);
  for g in allg do
    if IsSolvable(g)=false then
    h:=Graph(g,Difference(g,RadicalGroup(g)),
                           OnPoints,function(x,y) return 
    IsSolvable(Subgroup(g,[x,y]))=false; end, true);
    k:=CompleteSubgraphs(h);
    cn:=[];
    for i in  k 
      do
      AddSet(cn,Size(i));
      od;
      Print("\n",IdGroup(g),",  ",StructureDescription(g),",
                                               cliquenumber=",
      Maximum(cn),"\n");
     fi;
    od;
    n:=n+1;
od;

\end{verbatim}

%Group $G$ of order less than or equal to 360 with $G/\Sol(G)\cong A_5$ are $[60,5],[120,5]$, $[120,35],[180,19],[240,92], [240,93], [240,94]$, $[240,190],[300,22]$, $[360,51],[360,122]$, $[ 360, 121 ]$,  $A5 \times S3$.
%
%Group $G$ of order less than or equal to 360 with $G/\Sol(G) \not \cong A_5$ are $[ 120, 34 ],  S5,$
%$[ 168, 42 ],  PSL(3,2), [ 240, 89 ],  C2 . S5 = SL(2,5) . C2, [ 240, 90 ],  SL(2,5) : C2,$
%$[240, 91]$,  $A5 : C4,[ 240, 189 ],  C2 x S5, [ 336, 114 ],  SL(2,7), [ 336, 208 ],  PSL(3,2) : C2,$
%
%$[ 336, 209 ],  C2 x PSL(3,2), [ 360, 118 ],  A6$
%
%$[ 360, 119 ]$,  $C3 x S5, [ 360, 120 ],  GL(2,4) : C2$, 
% and their non-solvable graph has clique greater than or equal to 9, using the following programme in GAP.

The following program in GAP \cite{gap} shows that the clique number of $\mathcal{NS}_G$ for groups of order   less or equal to $360$ with $G/\Sol(G) \not \cong A_5$ is greater  or equal to $9$.
\begin{verbatim}
n:=120;
while n<=504 do
	allg:=AllSmallGroups(n);
	for g in allg do
		if IsSolvable(g)=false then
			rad:=RadicalGroup(g);
			m:=Size(rad);
			l:=n/m;
			if l>60 then
				dif:=Difference(g,rad);
				for x in dif do
					clique:=[x];
					p:=0;
					for y in dif do
						i:=0;
						for z in clique do
							if IsSolvable(Subgroup(g,[y,z]))=true then
								i:=1;
								break;
							fi;
						od;
						if i=0 then
							AddSet(clique,y);
						fi;			
						if Size(clique)>9 then
							p:=1;
							break;
						fi;	
					od;
					if p=1 then
						break;
					fi;

				od;
				if p=1 then
					Print(IdGroup(g), "Clique greater than 8", "\n","\n");
				else
					Print(IdGroup(g), "Not Clique greater than 8", "\n","\n");
				fi;
			fi; 

		fi;
	od;
	n:=n+1;
od;

\end{verbatim}

%\end{rem}

We conclude this section with the following conjecture.
\begin{conj}
 Let $G$ be a non-solvable group such that $\omega(\mathcal{NS}_G) = 8$. Then $G/\Sol(G)\cong A_5$.
\end{conj}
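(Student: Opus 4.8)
The plan is to reduce the conjecture to a uniform clique estimate for non-abelian simple groups. First I would pass to $\bar G=G/\Sol(G)$: the quotient-monotonicity proposition proved above, in its equality case $N=\Sol(G)$, gives $\omega(\mathcal{NS}_{\bar G})=\omega(\mathcal{NS}_G)=8$, while $\Sol(\bar G)=\{1\}$. Thus it suffices to show that a group $\bar G$ with trivial solvable radical and $\omega(\mathcal{NS}_{\bar G})=8$ is isomorphic to $A_5$. I would next argue that $\bar G$ is finite: a group with trivial radical and finite clique number cannot be infinite simple (an infinite simple group such as a Tarski monster has every proper subgroup cyclic, so its non-solvable graph has infinite cliques), and more generally the element-order and Sylow arguments of Lemma~\ref{order-lem} and Proposition~\ref{geq6}, combined with the local-finiteness reasoning of Theorem~\ref{indno-2}, should rule out an infinite $\bar G$.

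With $\bar G$ finite and $\Sol(\bar G)=\{1\}$, its generalized Fitting subgroup coincides with its socle, so $\bar G$ has a minimal normal subgroup $N=S_1\times\cdots\times S_k$ that is a direct product of non-abelian simple groups, with $C_{\bar G}(N)=\{1\}$ and $\bar G$ embedding into $\operatorname{Aut}(N)$. Each factor $S_i$ is a non-solvable subgroup of $\bar G$, so the subgroup-monotonicity (part (a) of that proposition) gives $\omega(\mathcal{NS}_{S_i})\le\omega(\mathcal{NS}_{\bar G})=8$ for every $i$.

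The crux, and the step I expect to be hardest, is the lemma that \emph{every non-abelian simple group $S\not\cong A_5$ satisfies $\omega(\mathcal{NS}_S)\ge 9$.} I would split on element orders. If $S$ has an element $x$ of prime order $p\ge 11$, then the clique $\{x\}\cup\{x^iy:0\le i\le p-1\}$ of Proposition~\ref{geq6} (for a neighbour $y$ of $x$) already has size $1+p\ge 12$. The delicate range is $p\in\{5,7\}$, where this construction yields only cliques of size $6$ or $8$; here one must produce an extra mutually adjacent vertex from the finer subgroup structure of $S$, and it is precisely because $A_5$ admits exactly this extra room, raising its clique number to $8$ but no further, that $A_5$ is the borderline case. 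To make the dichotomy effective I would use Zsigmondy's theorem on primitive prime divisors together with the known element spectra of the finite simple groups of Lie type to show that all but finitely many simple groups contain an element of prime order $\ge 11$; the finitely many exceptional small simple groups, among them $PSL(2,7)$, $A_6=PSL(2,9)$ and $PSL(2,8)$, are exactly those already settled by the displayed GAP computation, each returning clique number $\ge 9$.

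Finally I would assemble the reduction. The lemma forces $S_i\cong A_5$ for every $i$. If $k\ge 2$ then $N\supseteq A_5\times A_5$; since a subgroup of $A_5\times A_5$ is non-solvable exactly when one of its two coordinate projections is non-solvable, the coordinatewise product of an $8$-clique in each factor is a clique of $\mathcal{NS}_{A_5\times A_5}$ of size far exceeding $8$, contradicting $\omega(\mathcal{NS}_{\bar G})=8$. Hence $k=1$ and $N\cong A_5$. Because $C_{\bar G}(N)=\{1\}$, we have $A_5\cong N\trianglelefteq\bar G\hookrightarrow\operatorname{Aut}(A_5)=S_5$, so $\bar G\in\{A_5,S_5\}$; and $\bar G=S_5$ is impossible since the displayed computation gives $\omega(\mathcal{NS}_{S_5})=16>8$. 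Therefore $\bar G\cong A_5$, that is, $G/\Sol(G)\cong A_5$. The two genuinely hard points remain the borderline $p=7$ analysis inside the key lemma and the clean separation of the infinite families of simple groups from the finite computational list.
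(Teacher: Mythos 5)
First, a point of orientation: the statement you were asked to prove is stated in the paper as a \emph{conjecture}; the paper offers no proof of it, so there is nothing to compare your argument against except its own internal soundness. Judged on that basis, your proposal is a reduction, not a proof, and the reduction leaves the essential content of the conjecture unproven. Your key lemma --- that every finite non-abelian simple group $S\not\cong A_5$ has $\omega(\mathcal{NS}_S)\ge 9$ --- is exactly where the whole difficulty lives, and you do not establish it. The construction from Proposition \ref{geq6} only handles elements of prime order $p\ge 11$; for the groups whose element orders involve only the primes $2,3,5,7$ you explicitly defer to ``the finer subgroup structure,'' and your claim that these exceptional groups ``are exactly those already settled by the displayed GAP computation'' is false. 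The paper's computation runs over groups of order at most $504$, which reaches $PSL(2,7)$, $A_6$ and $PSL(2,8)$ but none of the many larger simple groups with $\{2,3,5,7\}$-smooth element spectra: $A_7$, $A_8$, $A_9$, $A_{10}$, $PSL(3,4)$, $PSU(3,3)$, $PSU(4,3)$, $Sp_6(2)$, $J_2$, among others. Even the assertion that only finitely many simple groups lack an element of prime order $\ge 11$ requires nontrivial input (Zsigmondy plus $S$-unit/smoothness finiteness for the rank-one families, and CFSG-level spectrum data), none of which is supplied. So the hard case is not closed; it is relocated.

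There are two further gaps worth naming. First, the conjecture as stated allows infinite $G$, and your finiteness reduction does not go through: for infinite groups the paper never establishes that $\Sol(G)$ is a subgroup (the identification with the solvable radical, via \cite{gkps}, is for finite groups), the quotient-monotonicity proposition with equality at $N=\Sol(G)$ is proved only for finite $G$, and ``a Tarski monster has infinite cliques'' is one example rather than an argument that every infinite group with trivial radical has $\omega>8$; Theorem \ref{indno-2} concerns independence number and needs extra hypotheses (Engel, locally finite, linear, etc.), so it cannot be invoked as you do. Second, a smaller structural slip: you take $N$ to be \emph{a} minimal normal subgroup and assert $C_{\bar G}(N)=\{1\}$; that is only guaranteed if $N$ is the full socle (equivalently $F^*(\bar G)$), so you should set $N=\operatorname{soc}(\bar G)$ throughout. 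On the positive side, the parts of your skeleton that are checkable are correct and genuinely useful: the passage to $\bar G=G/\Sol(G)$ in the finite case, the observation that an $8$-clique in each factor of $A_5\times A_5$ yields the $64$-clique $\{(a_i,b_j)\}$ (since a subgroup of a direct product with a non-solvable coordinate projection is non-solvable), and the endgame $A_5\cong N\trianglelefteq\bar G\hookrightarrow\operatorname{Aut}(A_5)\cong S_5$ with $\omega(\mathcal{NS}_{S_5})=16$ ruling out $S_5$. This is a sensible CFSG-style strategy for attacking the conjecture, but as it stands it proves nothing beyond reducing the conjecture to an unproved (and mis-scoped) classification lemma.
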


\section{Groups with the same Non-solvable graphs}\label{S:sam}

%In this section we consider the non-weakly solvable groups with isomorphic non-solvable graphs. Note that if $G$ and $H$ are two groups, then $\mathcal{S}_G \cong \mathcal{S}_H$ if and only if there exists a bijective map $\phi:V(\mathcal{S}_G)\longrightarrow  V(\mathcal{S}_H)$ such that for every two distinct elements $x,y \in V(\mathcal{S}_G)$, we have $\langle x,y \rangle$ is not solvable if and only if $\langle \phi(x),\phi(y) \rangle$ is not solvable.

In \cite{Ab06}, Abdollahi  et al. conjectured that if $G$ and $H$ are two non-abelian finite groups such that their non-commuting graphs are isomorphic then $|G| = |H|$. They also verified this conjecture for several classes of finite groups. Recently, Nongsiang and Saikia \cite{ns17} posed similar conjecture for non-nilpotent graphs of finite groups. In this section we consider the following problem. 

\begin{prob}
Let $G$ and $H$ be two non-solvable groups such that $\mathcal{NS}_G \cong \mathcal{NS}_H$. Determine whether  $|G| = |H|$.
\end{prob}

We begin the section with the following theorem.

\begin{thm}\label{GfiHfi} 
Let $G$ and $H$  be two non-solvable groups such that $\mathcal{NS}_G \cong \mathcal{NS}_H$. If $G$ is finite then $H$ is also  finite. Moreover, $|\Sol(H)|$ divides 
\[
\gcd(|G|-|\Sol(G)|,|G|-|\Sol_G(x)|,|\Sol_G(g)|-|\Sol(G)|),
\]
where  $g \in G\setminus \Sol(G)$, and hence $|H|$ is bounded by a function of $G$.
\end{thm}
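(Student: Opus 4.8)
The plan is to split the statement into three parts: finiteness of $H$, the divisibility of $|\Sol(H)|$, and the resulting bound on $|H|$. Throughout, fix a graph isomorphism $\psi\colon \mathcal{NS}_G \to \mathcal{NS}_H$. Since $G$ is finite and non-solvable, its vertex set $G\setminus\Sol(G)$ is finite and non-empty, so $\mathcal{NS}_H$ has exactly $N := |G|-|\Sol(G)|$ vertices; that is, $|H\setminus\Sol(H)| = N$.

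For the finiteness of $H$, the key point is to show that $\Sol(H)$ is finite, since then $H = \Sol(H)\cup(H\setminus\Sol(H))$ is finite. First I would choose an edge $\{x,y\}$ of $\mathcal{NS}_H$, which exists because $H$ is non-solvable, so $\langle x,y\rangle$ is not solvable. Using Lemma \ref{sol}(c), $\langle xs, y\rangle = \langle xs, y\cdot 1\rangle$ is non-solvable for every $s\in\Sol(H)$; in particular each $xs$ has a neighbour and hence lies in $H\setminus\Sol(H)$. Since $s\mapsto xs$ is injective, this yields $|\Sol(H)| \le |H\setminus\Sol(H)| = N$, so $\Sol(H)$ is finite and therefore $H$ is finite.

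Once $H$ is known to be finite, $\Sol(H)$ is the solvable radical and in particular a subgroup, so Proposition \ref{degsol} applies to $H$ and every vertex-degree in $\mathcal{NS}_H$ is a multiple of $|\Sol(H)|$. I would then verify the divisibility term by term. The vertex count gives $N = |H|-|\Sol(H)| = |\Sol(H)|\big([H:\Sol(H)]-1\big)$, so $|\Sol(H)|$ divides $|G|-|\Sol(G)|$. For the second quantity, recall $\deg_G(x) = |G|-|\Sol_G(x)|$ and that $\psi$ preserves degrees, so $|G|-|\Sol_G(x)| = \deg_H(\psi(x))$, which Proposition \ref{degsol} shows is a multiple of $|\Sol(H)|$ for every vertex $x$. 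The third quantity satisfies $|\Sol_G(g)|-|\Sol(G)| = \big(|G|-|\Sol(G)|\big) - \big(|G|-|\Sol_G(g)|\big) = N - \deg_G(g)$, a difference of two multiples of $|\Sol(H)|$. Hence $|\Sol(H)|$ divides each of the three numbers and therefore their gcd.

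Finally, the bound on $|H|$ falls out immediately: from $|\Sol(H)|\le N$ and $|H| = |\Sol(H)| + N$ we obtain $|H|\le 2N = 2\big(|G|-|\Sol(G)|\big)$, a function of $G$ alone. The main obstacle is the finiteness step, i.e. passing from ``$H\setminus\Sol(H)$ finite'' to ``$\Sol(H)$ finite'': it rests on applying Lemma \ref{sol}(c) to the a priori infinite group $H$, so I would first confirm that the coset-translation property of $\Sol$ used there is available before $H$ is known to be finite (equivalently, that $xs\notin\Sol(H)$ for $s\in\Sol(H)$). Everything after finiteness is routine bookkeeping with degrees and the isomorphism $\psi$.
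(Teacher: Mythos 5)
Your bookkeeping after finiteness is correct and matches the paper: once $H$ is known to be finite, $\Sol(H)$ is its solvable radical and hence a subgroup, Lagrange gives that $|\Sol(H)|$ divides $|H|-|\Sol(H)|=|G|-|\Sol(G)|$, degree-preservation under $\psi$ together with the fact that every degree in $\mathcal{NS}_H$ is a multiple of $|\Sol(H)|$ (your Proposition \ref{degsol}; the paper re-derives the coset decomposition of $\Sol_H(x')$ via Lemma \ref{sol}) gives the second term, and the third is the difference of the first two. The problem is the finiteness step, and the gap you yourself flag there is genuine, not a routine verification. You send $s\mapsto xs$ for $s\in\Sol(H)$ and a fixed vertex $x$ lying on an edge $\{x,y\}$, and you need $xs\notin\Sol(H)$, which you propose to extract from Lemma \ref{sol}(c) applied to $H$. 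But Lemma \ref{sol} is stated and proved only for \emph{finite} groups: its proof rests on \cite[Lemma 3.1]{bnn19}, which uses that $\Sol(\cdot)$ of a finite group equals the solvable radical --- the Guralnick--Kunyavskii--Plotkin--Shalev theorem, a deep result specific to finite groups. At the moment you invoke it, $H$ is not yet known to be finite, and for a general (possibly infinite) group $\Sol(H)$ is not known to be a subgroup or to satisfy any coset-translation property; indeed the implication ``$s\in\Sol(H)$ and $xs\in\Sol(H)$ imply $x\in\Sol(H)$'' is essentially the subgroup-closure question that the finite-group machinery settles. So ``confirming the property first'' is not a check one can make --- it is precisely the missing ingredient, and your finiteness argument does not go through as written.

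The paper avoids this trap by using only a property of $\Sol(H)$ that holds in \emph{every} group directly from the definition: closure under conjugation, since $\langle aha^{-1},w\rangle=a\langle h,a^{-1}wa\rangle a^{-1}$. Hence $H\setminus\Sol(H)$ is a finite union of conjugacy classes, each of its elements has a centralizer of finite index, and $K=C_H(H\setminus\Sol(H))$ has finite index in $H$. Then, for adjacent vertices $u,v$ and any $s\in K$, the cyclic subgroup $\langle s\rangle$ is central in $\langle s,u,v\rangle$; if $\langle su,v\rangle$ were solvable, so would be the subgroup $\langle su,v\rangle\langle s\rangle$, which contains $u=(su)s^{-1}$ and $v$, contradicting non-solvability of $\langle u,v\rangle$. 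Thus $Ku\subseteq H\setminus\Sol(H)$, forcing $K$ to be finite, whence $H$ is finite. In short: replace translation by elements of $\Sol(H)$ with conjugation plus translation by elements of the centralizer $K$; only after this does the rest of your argument (which is sound) apply.
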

\begin{proof} 
Since $\mathcal{NS}_G \cong \mathcal{NS}_H$, we have $|H\setminus \Sol(H)| = |G\setminus \Sol(G)|$ and so $|H\setminus \Sol(H)|$ is finite. If $h \in H\setminus \Sol(H)$ then $\{aha^{-1} : a \in H\} \subset H\setminus \Sol(H)$, since $\Sol(H)$ is closed under conjugation. Thus every element in $H\setminus \Sol(H)$ has finitely many conjugates in $H$. It follows that $K = C_H (H\setminus \Sol(H))$ has finite index in $H$. Since $\mathcal{NS}_H$ has no isolated vertex,  there exist two adjacent vertices $u$ and $v$ in $\mathcal{NS}_H$. Now, if $s \in K$ then $s \in C_H (\{u,v\})$ and so $\langle su,v \rangle$ is not solvable since $\langle su, v \rangle \cong \langle u, v \rangle \times \langle s \rangle$. Therefore $Ku \subset H\setminus \Sol(H)$ and so $K$ is finite. Hence, $H$ is  finite.

It follows that $\Sol(H)$ is a subgroup of $H$ and so $|\Sol(H)|$ divides $|H|-|\Sol(H)|$. Since $|H|-|\Sol(H)| =|G| - |\Sol(G)|$, we have $|\Sol(H)|$ divides $|G| - |\Sol(G)|$. Let $x' \in H \setminus \Sol(H)$ and $ y \in \Sol_H(x')$. Then, by Lemma \ref{sol}, $\langle x', yz \rangle$ is solvable for all $z \in \Sol(H)$. Thus $\Sol_H(x') = \Sol(H)\cup y_1\Sol(H) \cup \dots \cup y_n \Sol(H)$, for some $y_i \in H$. Therefore $|\Sol(H)|$ divides $|\Sol_H(x')|$ and so $|\Sol(H)|$ divides $|H|-|\Sol_H(x')|$.
We have $\deg(\mathcal{NS}_G) = \deg(\mathcal{NS}_H)$ since $\mathcal{NS}_G \cong \mathcal{NS}_H$. Also  
% since $\mathcal{NS}_G \cong \mathcal{NS}_H$, $\{\deg(v) \mid v \in V(\mathcal{NS}_G)\} = \{\deg(v) \mid v \in V(\mathcal{NS}_H)\}$.
 $\deg(g) = |G| - |\Sol_G(g)|$ for any $g \in V(\mathcal{NS}_G)$ and $\deg(h) = |H| - |\Sol_H(h)|$ for any $h \in V(\mathcal{NS}_H)$. Therefore $|\Sol(H)|$ divides $|H| - |\Sol_H(h)|$ and hence  $|G| - |\Sol_G(g)|$ for any $g \in G \setminus \Sol(G)$. Since  $|\Sol(H)|$ divides $|G| - |\Sol(G)|$ and $|G| - |\Sol_G(g)|$, it divides $|G| - |\Sol(G)| - (|G| - |\Sol_G(g)|)= |\Sol_G(g)| - |\Sol(G)| $. This completes the proof.
\end{proof}

\begin{prop} Let $G$ be a non-solvable group such that $\mathcal{NS}_G$ is finite. Then $G$ is a finite group.
\end{prop}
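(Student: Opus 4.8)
The plan is to recognise this proposition as a free-standing version of the finiteness argument already carried out in the first paragraph of the proof of Theorem~\ref{GfiHfi}: there one shows that $\mathcal{NS}_H$ finite forces $H$ finite, and here the hypothesis that $\mathcal{NS}_G$ is finite supplies exactly the same input with $G$ in place of $H$. Concretely, $\mathcal{NS}_G$ finite means the vertex set $G\setminus\Sol(G)$ is a finite set, which we take to be nonempty so that $\mathcal{NS}_G$ is a genuine graph; then any vertex $x$ lies outside $\Sol(G)$, so by the definition of $\Sol(G)$ there is a $y$ with $\langle x,y\rangle$ non-solvable, and hence $\mathcal{NS}_G$ has at least one edge. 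The strategy is to produce a subgroup $K\le G$ that is simultaneously of finite index and finite, which forces $|G|$ to be finite.

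First I would record that $\Sol(G)$ is invariant under conjugation: since $\langle gxg^{-1},y\rangle=g\langle x,g^{-1}yg\rangle g^{-1}$ is solvable precisely when $\langle x,g^{-1}yg\rangle$ is, an element $x$ lies in $\Sol(G)$ if and only if all of its conjugates do. Hence $G\setminus\Sol(G)$ is a union of conjugacy classes, and as it is finite, each $h\in G\setminus\Sol(G)$ has a finite conjugacy class, so $[G:C_G(h)]<\infty$. Setting $K=C_G(G\setminus\Sol(G))=\bigcap_{h\in G\setminus\Sol(G)}C_G(h)$, I obtain $K$ as a finite intersection of finite-index subgroups, whence $[G:K]<\infty$.

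Next I would fix adjacent vertices $u,v$ and show $su\in G\setminus\Sol(G)$ for every $s\in K$. Put $A=\langle u,v\rangle$, which is non-solvable, and $Z=\langle s\rangle$; since $s$ centralises both $u$ and $v$ it centralises $A$, so $Z$ is central in $M=\langle A,s\rangle$. Projecting $N=\langle su,v\rangle$ by the quotient map $M\to M/Z$ sends $su\mapsto uZ$ and $v\mapsto vZ$, so the image of $N$ is $(AZ)/Z\cong A/(A\cap Z)$; as $A\cap Z$ is central and abelian, this central quotient of the non-solvable group $A$ is again non-solvable, and therefore its preimage $N=\langle su,v\rangle$ is non-solvable as well. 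Thus $su\notin\Sol(G)$, giving $Ku\subseteq G\setminus\Sol(G)$ and hence $|K|=|Ku|\le|G\setminus\Sol(G)|<\infty$. Since $K$ is finite and of finite index, $G$ is finite, as required. I expect the one genuinely delicate point to be the non-solvability of $\langle su,v\rangle$: the claim in the proof of Theorem~\ref{GfiHfi} that $\langle su,v\rangle\cong\langle u,v\rangle\times\langle s\rangle$ need not hold verbatim, so I would replace it by the central-quotient argument above, which only requires that a central extension of a solvable group by an abelian group is solvable.
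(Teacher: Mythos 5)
Your proof is correct and follows essentially the same route as the paper: the paper's own proof of this proposition simply points to the first paragraph of the proof of Theorem~\ref{GfiHfi}, which is exactly the argument you reproduce (conjugation-invariance of $\Sol(G)$, finiteness of conjugacy classes outside $\Sol(G)$, the finite-index centralizer $K$, and the translate $Ku$ landing in the finite vertex set). Your one deviation is an improvement rather than a difference of method: the paper asserts $\langle su,v\rangle\cong\langle u,v\rangle\times\langle s\rangle$, which need not hold (e.g.\ when $s\in\langle u,v\rangle$), and your central-quotient argument --- that $\langle su,v\rangle$ surjects onto $\langle u,v\rangle Z/Z\cong\langle u,v\rangle/(\langle u,v\rangle\cap Z)$, a central quotient of a non-solvable group and hence non-solvable --- correctly secures the only fact actually needed, namely that $\langle su,v\rangle$ is non-solvable.
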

\begin{proof} It follows directly from the first paragraph of the proof of the above theorem. 
\end{proof}

\begin{prop} Let $G$ be a group such that $\mathcal{NS}_G\cong \mathcal{NS}_{A_5}$, then $G\cong A_5$.
\end{prop}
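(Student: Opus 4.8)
The plan is to determine $|G|$ and $|\Sol(G)|$ from the hypothesis using Theorem~\ref{GfiHfi}, and then to appeal to the classification of groups of order $60$. Throughout I use that $A_5$ is a finite non-solvable simple group, so that its solvable radical is trivial, $\Sol(A_5)=\{1\}$, whence $|A_5\setminus\Sol(A_5)|=59$, together with the fact, recorded earlier, that $\deg(\mathcal{NS}_{A_5})=\{24,36,50\}$.

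First I would apply Theorem~\ref{GfiHfi} with the roles of the two groups reversed, that is, taking the finite group there to be $A_5$ and the second group to be $G$. Since $A_5$ is finite, the theorem forces $G$ to be finite as well, and moreover gives that $|\Sol(G)|$ divides
\[
\gcd\bigl(|A_5|-|\Sol(A_5)|,\ |A_5|-|\Sol_{A_5}(x)|,\ |\Sol_{A_5}(g)|-|\Sol(A_5)|\bigr)
\]
for $x,g\in A_5\setminus\Sol(A_5)$. Here $|A_5|-|\Sol(A_5)|=59$, while $|A_5|-|\Sol_{A_5}(x)|=\deg(x)$ runs over $\{24,36,50\}$. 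Since $\gcd(59,24)=1$, the displayed gcd equals $1$, and therefore $\Sol(G)$ is trivial.

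Next, because $\mathcal{NS}_G\cong\mathcal{NS}_{A_5}$ the two graphs have the same number of vertices, so $|G|-|\Sol(G)|=|A_5|-|\Sol(A_5)|=59$; combined with $|\Sol(G)|=1$ this yields $|G|=60$. Finally, $G$ is non-solvable, since $\mathcal{NS}_G$ has edges and in particular is non-empty, forcing $G\neq\Sol(G)$. It then remains to invoke the classical fact that $A_5$ is the unique non-solvable group of order $60$ up to isomorphism, which gives $G\cong A_5$.

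I expect essentially no obstacle beyond bookkeeping: the one external ingredient is the uniqueness of the non-solvable group of order $60$, a standard consequence of the Sylow theorems (a non-solvable group of order $60$ must be simple and is then forced to be $A_5$). The only computational point worth checking is that $59$ is prime and divides none of the degrees $24,36,50$, which is precisely what collapses the gcd to $1$ and pins $|\Sol(G)|$ down to $1$.
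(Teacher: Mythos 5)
Your proof is correct, but it pins down $|\Sol(G)|$ by a different mechanism than the paper. The paper's own proof is shorter and cruder: after noting finiteness and non-solvability of $G$ and the vertex count $|G|-|\Sol(G)|=59$ (both of which you also use), it only invokes the fact that $\Sol(G)$ is a proper subgroup of the finite group $G$, so $|\Sol(G)|\leq |G|/2$, giving $|G|\leq 118$; it then declares that ``the result follows,'' leaving the endgame implicit (e.g.\ by Lagrange $|\Sol(G)|$ divides $|G|=|\Sol(G)|+59$, hence divides $59$; and $|\Sol(G)|=59$ would force $|G|=118=2\cdot 59$, a solvable order, so $|G|=60$), together with the same classification fact you invoke. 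Your route instead uses the quantitative divisibility clause of Theorem~\ref{GfiHfi} with the roles of the two groups reversed, combined with the degree set $\deg(\mathcal{NS}_{A_5})=\{24,36,50\}$: since $\gcd(59,24)=1$, you get $|\Sol(G)|=1$ and hence $|G|=60$ outright. This buys you a gap-free, fully explicit determination of $|G|$, at the cost of needing the degree data; the paper's argument needs only Lagrange but leaves more unsaid. One small ordering point: Theorem~\ref{GfiHfi} is stated for two non-solvable groups, so you should record the (trivial) observation that $G$ is non-solvable --- the graph has $59$ vertices, so $G\neq\Sol(G)$ --- \emph{before} invoking that theorem rather than at the end; the paper's proof needs, and asserts, the same observation at the outset.
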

\begin{proof}
Since  $\mathcal{NS}_G\cong \mathcal{NS}_{A_5}$, we have $G$ is a finite non-solvable group and 
\[
|G\setminus \Sol(G)|=|A_5\setminus \Sol(A_5)|=59.
\]
Therefore, $|G|=|\Sol(G)| + 59$. Since $\Sol(G)$ is a subgroup of $G$, we have $|\Sol(G)|\leq \frac{|G|}{2}$ and so $|G|\leq 118$. 
%Since the only non-solvable group of order less than or equal to 118 is $A_5$, we have $G\cong A_5$.
Hence, the result follows.
\end{proof}

\begin{rem}
  Using the following program in GAP \cite{gap}, one can see that the non-solvable graphs of $SL(2,5)$ and $\mathbb{Z}_2\times A_5$ are isomorphic. It follows that  non-solvable graphs  of two groups are isomorphic need not implies that their corresponding groups are isomorphic.
\begin{verbatim}

LoadPackage("GRAPE");
g:=SmallGroup(120,5);
solg:=RadicalGroup(g);
gmc:= Difference(g,solg);
m:=Size(gmc);
h:=SmallGroup(120,35);
hmc:=Difference(h,RadicalGroup(h));
if m=Size(hmc) then
   gg:=Graph(g,gmc,OnPoints,function(x,y) return
                               IsSolvable(Subgroup(g,[x,y]))=
   false; end, true);
   gh:=Graph(h,hmc,OnPoints,function(x,y) return 
                               IsSolvable(Subgroup(h,[x,y]))=
   false; end, true);
   if IsIsomorphicGraph(gg,gh)=true then
     Print("\n","\n","an example of G and H isomorphic
                                  but not of same order.doc","G= ",
     StructureDescription(g), ", ", "	Id=", IdGroup(g)," H = ",
     StructureDescription(h)," Id=", IdGroup(h),"\n","\n");
   fi;
fi;
\end{verbatim}

\end{rem}

\begin{prop} 
Let $G$ and $H$ be two finite non-solvable groups. If $\mathcal{NS}_G \cong \mathcal{NS}_H$ then $\mathcal{NS}_{G \times A} \cong \mathcal{NS}_{H \times B}$, where $A$ and $B$  are two solvable groups  having  equal order.  
\end{prop}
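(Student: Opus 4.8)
The plan is to show that for any finite solvable group $S$, the graph $\mathcal{NS}_{G \times S}$ is nothing but a \emph{blow-up} of $\mathcal{NS}_G$ in which every vertex is duplicated into an independent set of $|S|$ vertices, and that this blow-up depends only on the isomorphism type of $\mathcal{NS}_G$ together with the integer $|S|$. Once this is established the conclusion is immediate: since $\mathcal{NS}_G \cong \mathcal{NS}_H$ and $|A| = |B|$, the graphs $\mathcal{NS}_{G \times A}$ and $\mathcal{NS}_{H \times B}$ are formed from isomorphic graphs using the same duplication factor, hence are themselves isomorphic.

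First I would pin down the vertex set and adjacency relation of $\mathcal{NS}_{G \times S}$. Using the facts recorded in the proof of the lemma that $|\deg(\mathcal{NS}_G)| = |\deg(\mathcal{NS}_{G\times S})|$, namely $\Sol_{G \times S}((x,s)) = \Sol_G(x) \times S$, I would observe that $\Sol(G \times S) = \bigcap_{(x,s)} \Sol_{G \times S}((x,s)) = \Sol(G) \times S$, so that $V(\mathcal{NS}_{G \times S}) = (G \setminus \Sol(G)) \times S$. Next, since $\langle (x,s),(y,t)\rangle \subseteq \langle x, y\rangle \times \langle s,t\rangle$ and $S$ is solvable, $\langle (x,s),(y,t)\rangle$ is non-solvable if and only if $\langle x,y \rangle$ is non-solvable. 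In particular, if $x = y$ then $\langle x, y\rangle$ is cyclic, so any two distinct vertices $(x,s)$ and $(x,t)$ are non-adjacent; and for $x \neq y$, $(x,s)$ and $(y,t)$ are adjacent exactly when $x$ and $y$ are adjacent in $\mathcal{NS}_G$.

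This description says precisely that $\mathcal{NS}_{G\times S}$ is obtained from $\mathcal{NS}_G$ by replacing each vertex with an independent set of $|S|$ vertices (indexed by $S$) and joining every copy of $x$ to every copy of $y$ whenever $x$ and $y$ are adjacent in $\mathcal{NS}_G$; equivalently, $\mathcal{NS}_{G\times S}$ is the lexicographic product of $\mathcal{NS}_G$ with the edgeless graph $\overline{K_{|S|}}$ on $|S|$ vertices. To finish, let $\varphi\colon \mathcal{NS}_G \to \mathcal{NS}_H$ be a graph isomorphism and let $\psi\colon A \to B$ be any bijection, which exists since $|A| = |B|$. Then $(x, a) \mapsto (\varphi(x), \psi(a))$ is a bijection $V(\mathcal{NS}_{G\times A}) \to V(\mathcal{NS}_{H \times B})$, and by the adjacency description it both preserves and reflects edges: $(x,a)$ and $(y,b)$ are adjacent if and only if $x \neq y$ and $x$ is adjacent to $y$ in $\mathcal{NS}_G$, if and only if $\varphi(x) \neq \varphi(y)$ and $\varphi(x)$ is adjacent to $\varphi(y)$ in $\mathcal{NS}_H$, if and only if $(\varphi(x),\psi(a))$ and $(\varphi(y),\psi(b))$ are adjacent. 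Hence $\mathcal{NS}_{G\times A} \cong \mathcal{NS}_{H \times B}$.

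I expect no genuine obstacle here; the only points requiring care are the identification $\Sol(G \times S) = \Sol(G) \times S$ and the observation that distinct copies of a single vertex are mutually non-adjacent, so that the blow-up uses the \emph{edgeless} graph rather than something denser. The essential conceptual point, which is what makes the hypothesis ``$|A| = |B|$'' (rather than the stronger ``$A \cong B$'') sufficient, is that the blow-up construction sees only the cardinality $|S|$ and not the group structure of $S$.
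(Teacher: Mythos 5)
Your proof is correct and follows essentially the same route as the paper: the paper's own (very terse) proof takes a graph isomorphism $\varphi\colon \mathcal{NS}_G \to \mathcal{NS}_H$ and a bijection $\psi\colon A \to B$ and asserts that $(g,a) \mapsto (\varphi(g),\psi(a))$ is a graph isomorphism, which is exactly your map. Your write-up simply supplies the verification the paper leaves implicit, namely the identification $\Sol(G\times S) = \Sol(G)\times S$ and the blow-up (lexicographic product with $\overline{K_{|S|}}$) description of $\mathcal{NS}_{G\times S}$, both of which are sound.
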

\begin{proof}
Let $\varphi :\mathcal{NS}_G \to \mathcal{NS}_H$ be a graph isomorphism and $\psi :A \to B$ be a bijective map. Then  $(g, a) \mapsto (\varphi(g),\psi(a))$  defines  a graph isomorphism between $\mathcal{NS}_{G \times A}$ and $\mathcal{NS}_{H \times B}$. 
\end{proof}

A non-solvable group $G$ is called an $Fs$-group if for every two elements $x , y \in G\setminus \Sol(G)$ such that $\Sol_G(x)\neq \Sol_G(y)$ implies $\Sol_G(x) \not \subset \Sol_G(y)$ and $\Sol_G(y) \not \subset \Sol_G(x)$.

\begin{prop} 
Let $G$ be an $Fs$-group. If $H$ is a non-solvable group such that $\mathcal{NS}_G \cong \mathcal{NS}_H$  then $H$ is also an  $Fs$-group.
\end{prop}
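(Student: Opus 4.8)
The plan is to reformulate the defining property of an $Fs$-group as a purely graph-theoretic condition on $\mathcal{NS}_G$ that is manifestly invariant under graph isomorphism, and then transport it across the isomorphism $\mathcal{NS}_G \cong \mathcal{NS}_H$. The bridge is the identity $\nbd(x) = (G\setminus\Sol(G))\setminus\Sol_G(x)$, valid for every vertex $x$: a vertex $y$ is adjacent to $x$ exactly when $y\notin\Sol_G(x)$, and $\Sol(G)\subseteq\Sol_G(x)$ forces the complement to be taken within the whole vertex set. Thus each neighborhood is the complement, inside $V(\mathcal{NS}_G)$, of the corresponding solvabilizer.

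First I would record the basic translation. For distinct vertices $x,y$, since $\nbd(x)$ and $\nbd(y)$ are the complements within $V(\mathcal{NS}_G)$ of $\Sol_G(x)$ and $\Sol_G(y)$, one obtains $\Sol_G(y)\subseteq\Sol_G(x) \iff \nbd(x)\subseteq\nbd(y)$, with equality corresponding to equality and proper containment to proper containment. In particular $\Sol_G(x)=\Sol_G(y)$ iff $\nbd(x)=\nbd(y)$. Here one must be careful about the fact that $x\in\Sol_G(x)$ (so $x\notin\nbd(x)$, and likewise $y\notin\nbd(x)$ when $\Sol_G(x)=\Sol_G(y)$): this is precisely what makes the two complements correspond exactly, so that the \emph{properness} of the inclusions is preserved and not disturbed by the self-membership of a vertex in its own solvabilizer. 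I would verify this point explicitly.

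With this dictionary in hand, the $Fs$-condition, namely that $\Sol_G(x)\neq\Sol_G(y)$ forces $\Sol_G(x)$ and $\Sol_G(y)$ to be incomparable, becomes the assertion that no two distinct vertices of $\mathcal{NS}_G$ have one neighborhood properly contained in the other; equivalently, distinct vertices with distinct neighborhoods always have incomparable neighborhoods. This is a property of the abstract graph alone. Now let $\varphi:\mathcal{NS}_G\to\mathcal{NS}_H$ be a graph isomorphism. Since $\varphi(\nbd(v))=\nbd(\varphi(v))$ and $\varphi$ respects set inclusions, this neighborhood-antichain property transfers from $\mathcal{NS}_G$ to $\mathcal{NS}_H$. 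As $H$ is non-solvable by hypothesis, applying the same dictionary to $H$ shows that its solvabilizers form an antichain, which is exactly the statement that $H$ is an $Fs$-group.

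I expect the only real subtlety to be the verification in the second step that the passage between $\Sol_G(x)$ and $\nbd(x)$ preserves proper containments, which hinges on the self-membership $x\in\Sol_G(x)$; once that is pinned down, the remaining transfer of the property across $\varphi$ is formal.
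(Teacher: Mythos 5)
Your proposal is correct and takes essentially the same route as the paper: the paper's proof rests on the identity $\psi(\Sol_H(z)\setminus \Sol(H)) = \Sol_G(\psi(z))\setminus \Sol(G)$, which is exactly your dictionary that the solvabilizer of a vertex, taken within the vertex set, is the complement of its neighborhood and is therefore carried along by any graph isomorphism. Your version merely makes explicit (via the neighborhood-antichain reformulation and the self-membership $x\in\Sol_G(x)$) what the paper asserts without comment, so no substantive difference.
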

\begin{proof}
Let $\psi: \mathcal{NS}_H \to  \mathcal{NS}_G$ be a graph isomorphism.  Let $x, y \in H\setminus \Sol(H)$ such that $\Sol_H(x) \subseteq \Sol_H(y)$. Then $\psi(\Sol_H(x)\setminus \Sol(H)) \subseteq \psi(\Sol_H(y)\setminus \Sol(H))$. We have
\[
\psi(\Sol_H(z)\setminus \Sol(H)) = \Sol_G(\psi(z))\setminus \Sol(G) \text{ for all } z\in H\setminus \Sol(H). 
\]
Therefore, $\Sol_G(\psi(x))\setminus \Sol(G) \subseteq \Sol_G(\psi(y))\setminus \Sol(G)$. 
Since $G$ is an $Fs$-group, we have 
\[
\Sol_G(\psi(x))\setminus \Sol(G) = \Sol_G(\psi(y))\setminus \Sol(G).
\]
%Thus, since $G$ is an $Fs$-group, we have $\Sol_G(\psi(x))\setminus \Sol(G) =  \Sol_G(\psi(y))\setminus \Sol(G)$. 
It follows that $\Sol_H(x)\setminus \Sol(H) =  \Sol_H(y)\setminus \Sol(H)$ and so $\Sol_H(x) = \Sol_H(y)$. Hence, $H$ is an $Fs$-group.
\end{proof}

\section{Genus of non-solvable graph}\label{S:gen}
The \textit{genus} of a graph $\Gamma$, denoted by $\gamma(\Gamma)$, is the smallest non-negative integer $g$ such that the graph can be embedded on the surface obtained by attaching $g$ handles to a sphere. Clearly, if $\tilde{\Gamma}$ is a subgraph of $\Gamma$ then $\gamma(\tilde{\Gamma}) \leq \gamma(\Gamma)$. Graphs having genus zero are called \textit{planar} graphs, while those having genus one are called \textit{toroidal} graphs. Graphs having genus two are called {\em double-toroidal} graphs and those having genus three are called {\em triple-toroidal} graph. In \cite[Corollary 3.14]{hr}, it was shown that $\mathcal{NS}_G$ is not planar  for finite non-solvable group   $G$.  
In this section, we extent \cite[Corollary 3.14]{hr} and  show that $\mathcal{NS}_G$ is neither planar,  toroidal,  double-toroidal nor triple-toroidal.
%study the genus of the non-solvable graph $\mathcal{S}_G$ for a finite solvable group $G$.

%A {\em block} of a graph $\Gamma$ is a connected subgraph $B$ of $\Gamma$ that is maximal with respect to the property that removal of a single vertex (and the incident edges) from $B$ does not make it disconnected, that is, the graph $B\setminus \{v\}$ is connected for all $v \in V(B)$. Given a graph $\Gamma$, there is  a unique finite collection $\mathfrak{B}$ of blocks   of $\Gamma$, such that $ \Gamma  = \underset{B \in \mathfrak{B}} \cup B$. The collection 
%$\mathfrak{B}$ is called the {\em block decomposition} of $\Gamma$.  In \cite[Corollary 1]{bhky}, it has been proved  that the genus of a graph is the sum of the genera of its blocks. Thus, it follows that 

%\begin{lem}\label{lem1}
%If a graph $\Gamma$ has two disjoint subgraphs $\Gamma_1$ and $\Gamma_2$ such that $\Gamma_1 \cong K_m$ and $\Gamma_2 \cong K_n$ for some positive integers $m$ and $n$, then   $\gamma(\Gamma ) \geq  \gamma(K_n) + \gamma(K_m)$.
%\end{lem}

%If $n\geq 3$, then it is well-known (see \cite[Theorem 6-38]{whi}) that the genus of the complete graph $K_n$ is given by
It is well-known (see \cite[Theorem 6-39]{whi}) that
%\begin{equation}
$\gamma(K_{n})=\left\lceil \frac{(n-3)(n-4)}{12}\right 
\rceil$,
%\end{equation}
where $n\geq 3$ and $K_n$ is the complete graph on $n$ vertices. Also, if $m,n\geq 2$ then
%On the other hand, if $m,n\geq 2$, then it is also well-known (see \cite[Theorem 6-39]{whi}) that the genus of the complete bipartite graph $K_{m,n}$ and complete tripartite graph $K_{m,m, m}$ are given by
%\begin{equation}
\[\gamma(K_{m,n})=\left\lceil \frac{(m-2)(n-2)}{4}
\right\rceil \text{ and }  \gamma(K_{m, m, m})= \frac{(m - 2)(m - 1)}{2},\]
%\end{equation}
where  $K_{m,n}$, $K_{m,m, m}$ are complete  bipartite and tripartite graphs respectively.

%\begin{prop}\cite[Corollary 3.14]{hr} Let $G$ be a finite non-solvable group. Then $\mathcal{S}_G$ is not planar.
%\end{prop}

\begin{prop}\label{finsol}
 Let $G$ be a finite non-solvable group.  Then 
\[
 |\Sol(G)|\leq \sqrt{2\gamma(\mathcal{NS}_G)} + 2.
\]
\end{prop}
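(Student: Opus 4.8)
The plan is to exhibit a complete tripartite graph $K_{m,m,m}$ with $m=|\Sol(G)|$ as a subgraph of $\mathcal{NS}_G$, and then to invoke monotonicity of the genus under subgraphs together with the stated formula $\gamma(K_{m,m,m})=\frac{(m-2)(m-1)}{2}$. The bound then drops out of a one-line inequality manipulation, so essentially all the work lies in producing the tripartite subgraph from the coset structure of $\Sol(G)$.

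First I would produce three mutually adjacent vertices lying in pairwise distinct cosets of $\Sol(G)$. By Proposition \ref{geq6} we have $\omega(\mathcal{NS}_G)\geq 6$, so $\mathcal{NS}_G$ has a clique; pick any three of its vertices $x_1,x_2,x_3$, so that $\langle x_i,x_j\rangle$ is non-solvable for all $i\neq j$. These lie in pairwise distinct cosets of $\Sol(G)$: if $x_j\in x_i\Sol(G)$, say $x_j=x_iu$ with $u\in\Sol(G)$, then $\langle x_i,x_j\rangle=\langle x_i,u\rangle$ is solvable because $u\in\Sol(G)\subseteq\Sol_G(x_i)$, contradicting adjacency.

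Next I would blow up each vertex into its full coset. Put $V_i=x_i\Sol(G)$ for $i=1,2,3$; these are pairwise disjoint sets, each of size $m=|\Sol(G)|$. For $i\neq j$ and any $u,v\in\Sol(G)$, Lemma \ref{sol}(c) shows that $\langle x_iu,x_jv\rangle$ is non-solvable, so every vertex of $V_i$ is adjacent to every vertex of $V_j$. Hence the induced subgraph of $\mathcal{NS}_G$ on $V_1\cup V_2\cup V_3$ contains $K_{m,m,m}$ as a subgraph. (Any additional edges inside a single $V_i$ are harmless, since we only need $K_{m,m,m}$ to appear as a subgraph.)

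Finally, monotonicity of the genus and the stated formula give
\[
\gamma(\mathcal{NS}_G)\geq \gamma(K_{m,m,m})=\frac{(m-2)(m-1)}{2}\geq \frac{(m-2)^2}{2},
\]
so $(m-2)^2\leq 2\gamma(\mathcal{NS}_G)$ and therefore $m\leq \sqrt{2\gamma(\mathcal{NS}_G)}+2$ whenever $m\geq 2$; the cases $m=1$ and $m=2$ are immediate since the right-hand side is always at least $2$. The construction is routine, and the only genuinely load-bearing inputs are the genus value for $K_{m,m,m}$ and Lemma \ref{sol}(c); the main subtlety to watch is guaranteeing that the three clique vertices fall into distinct cosets, so that the coset blow-up really produces a (non-degenerate) tripartite graph rather than collapsing parts together.
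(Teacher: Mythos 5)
Your proof is correct and follows essentially the same route as the paper: take three mutually adjacent vertices (guaranteed by Proposition \ref{geq6}), blow them up into cosets of $\Sol(G)$ via Lemma \ref{sol} to obtain a $K_{m,m,m}$ subgraph with $m=|\Sol(G)|$, and then apply genus monotonicity together with $\gamma(K_{m,m,m})=\frac{(m-2)(m-1)}{2}$. Your write-up is in fact slightly more careful than the paper's, since you verify that the three clique vertices lie in distinct cosets and you handle the degenerate cases $m\leq 2$ where the inequality $\frac{(m-2)(m-1)}{2}\geq\frac{(m-2)^2}{2}$ would otherwise fail.
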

\begin{proof} 
Assume that $Z=\Sol(G)$. By Proposition \ref{geq6}, we have $\omega(\mathcal{NS}_G) \geq 3$. So, there exist $u,v,w \in G\setminus Z$ such that they are adjacent to each other. 
Then, by Lemma \ref{sol}, $\mathcal{NS}_G[uZ\cup vZ \cup wZ]$ is isomorphic to $K_{|Z|,|Z|,|Z|}$. %Now, by \cite[Theorem 6-39]{whi},
We have 
%$\gamma( K_{|Z|,|Z|,|Z|})=\frac{(|Z|-2)(|Z|-1)}{2} \leq \gamma(\mathcal{NS}_G)$. Therefore,
\[
\gamma(\mathcal{NS}_G) \geq \gamma( K_{|Z|,|Z|,|Z|})=\frac{(|Z|-2)(|Z|-1)}{2} \geq \frac{(|Z|-2)(|Z|-2)}{2} 
\]
and hence the result follows.
\end{proof}
\begin{thm}\label{nopla} 
Let $G$ be a finite non-solvable graph. Then $\gamma(\mathcal{NS}_G)\geq 4$. In particular, $\mathcal{NS}_G$ is neither  planar,  toroidal,  double-toroidal nor triple-toroidal.
\end{thm}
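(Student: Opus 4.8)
The plan is to argue by contradiction: suppose $\gamma(\mathcal{NS}_G)\le 3$ and derive a contradiction by controlling $|\Sol(G)|$. By Proposition \ref{finsol} this assumption forces $|\Sol(G)|\le\sqrt{2\cdot 3}+2=\sqrt{6}+2<5$, so writing $Z=\Sol(G)$ and $s=|Z|$ we have $s\in\{1,2,3,4\}$. My first move is to exploit the clique guaranteed by Proposition \ref{geq6}: choose a clique $\{x_1,\dots,x_6\}$ of $\mathcal{NS}_G$. The cosets $x_1Z,\dots,x_6Z$ are pairwise distinct, for if $x_iZ=x_jZ$ then $x_i^{-1}x_j\in Z$ and $\langle x_i,x_j\rangle=\langle x_i,x_i^{-1}x_j\rangle$ is solvable by the very definition of $\Sol(G)$, contradicting adjacency. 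By Lemma \ref{sol}(c) any two distinct cosets among these are completely joined, so $\mathcal{NS}_G[\,x_1Z\cup\cdots\cup x_6Z\,]$ contains the complete $6$-partite graph with parts of size $s$, and grouping the six parts into two blocks of three shows it contains $K_{3s,3s}$. Using the stated genus formula, $\gamma(\mathcal{NS}_G)\ge\gamma(K_{3s,3s})=\lceil (3s-2)^2/4\rceil\ge\lceil 16/4\rceil=4$ whenever $s\ge 2$, a contradiction. This disposes of $s\in\{2,3,4\}$ and reduces everything to $Z=\Sol(G)=\{1\}$.

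To treat $\Sol(G)=\{1\}$ I would use \emph{twin classes} of elements of large prime order. By Lemma \ref{order-lem} there is $a\in G$ of prime order $p\ge 5$. Since $\langle a^i,g\rangle=\langle a,g\rangle$ for $1\le i\le p-1$, the set $T=\{a,a^2,\dots,a^{p-1}\}$ is an independent set whose members share the common neighbourhood $N(a)$, each vertex of $T$ is adjacent to every vertex of $N(a)$, and $T\cap N(a)=\emptyset$. Hence $\mathcal{NS}_G$ contains $K_{p-1,\deg(a)}$, where $|T|=p-1$ and $|N(a)|=\deg(a)\ge 6$ by \eqref{1}. If $p\ge 7$ then $p-1\ge 6$, so $K_{6,6}\subseteq\mathcal{NS}_G$ and $\gamma(\mathcal{NS}_G)\ge\gamma(K_{6,6})=\lceil 16/4\rceil=4$, again contradicting $\gamma\le 3$.

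The step I expect to be the main obstacle is the remaining case $\Sol(G)=\{1\}$ with $p=5$; here $G$ has no element of prime order $\ge 7$, so by Cauchy $|G|$ is a $\{2,3,5\}$-number, and the twin-class bound $K_{4,\deg(a)}$ alone gives genus $\ge 4$ only when $\deg(a)\ge 9$, whereas \eqref{1} guarantees only $\deg(a)\ge 6$. My intended remedy is to locate a copy of $A_5$ inside $G$. A minimal normal subgroup $N$ of $G$ is characteristically simple, hence a direct product of isomorphic simple groups; these factors cannot be abelian, since otherwise $N$ would be a nontrivial solvable normal subgroup contained in $\Sol(G)=\{1\}$. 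Thus each simple factor $T$ is a nonabelian simple $\{2,3,5\}$-group, hence one of $A_5$, $A_6$, $U_4(2)$, and each of these contains $A_5$; therefore $A_5\le G$ and $\mathcal{NS}_{A_5}$ is a subgraph of $\mathcal{NS}_G$. Inside $A_5$ an element $a$ of order $5$ satisfies $\Sol_{A_5}(a)=N_{A_5}(\langle a\rangle)\cong D_{10}$, so $\deg_{A_5}(a)=50$ and the twin-class construction yields $K_{4,50}\subseteq\mathcal{NS}_{A_5}\subseteq\mathcal{NS}_G$, giving $\gamma(\mathcal{NS}_G)\ge\gamma(K_{4,50})=\lceil 96/4\rceil=24\ge 4$. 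Combining the three cases yields $\gamma(\mathcal{NS}_G)\ge 4$, and the ``in particular'' assertion is immediate, since planar, toroidal, double-toroidal and triple-toroidal graphs have genus $0,1,2,3$ respectively.
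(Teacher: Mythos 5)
Your proof is correct, but it takes a genuinely different --- and considerably heavier --- route than the paper. The paper's proof is a single elementary construction with no case analysis: by Lemma \ref{order-lem} pick $x$ of prime order $p\geq 5$; observe that $\nbd(x)$ cannot consist entirely of involutions (otherwise $\langle x,y\rangle=\langle y,xy\rangle$ would be dihedral, hence solvable), so there exists $y\in\nbd(x)$ with $o(y)\geq 3$; then the sets $H=\{x,x^2,x^3,x^4\}$ and $K=\{y^ix^j : i=1,2,\ j=0,1,\dots,4\}$ span a subgraph containing $K_{4,10}$, whence $\gamma(\mathcal{NS}_G)\geq\gamma(K_{4,10})=4$. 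Your argument instead splits into three cases. Cases 1 and 2 (the coset blow-up of a $6$-clique into $K_{3s,3s}$, and the twin classes $\{a,a^2,\dots,a^{p-1}\}$ for $p\geq 7$) are clean, elementary, and rely only on the paper's own lemmas; note that your appeal to Proposition \ref{finsol} is redundant, since the $K_{3s,3s}$ bound already disposes of every $s\geq 2$ by itself. The real divergence is your case 3: to handle $\Sol(G)=\{1\}$ with $|G|$ a $\{2,3,5\}$-number you invoke the classification of nonabelian simple $\{2,3,5\}$-groups (they are $A_5$, $A_6$, $U_4(2)$ --- Herzog's theorem on $K_3$-groups) to embed $A_5$ in $G$ and then extract $K_{4,50}$ from $\mathcal{NS}_{A_5}$. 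That step is correct (and your computation $\Sol_{A_5}(a)\cong D_{10}$, $\deg_{A_5}(a)=50$ matches the paper's degree data for $A_5$), but it imports classification machinery far beyond anything else in the paper and would require an external citation. What your route buys in exchange is rigor at a point the paper treats lightly: the paper's $K_{4,10}$ requires $\langle x^k,y^2x^j\rangle=\langle x,y^2\rangle$ to be non-solvable, which does not follow automatically from $\langle x,y\rangle$ being non-solvable, whereas every adjacency used in your three cases is fully justified.
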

\begin{proof}
%Suppose that $1 \ne o(x) = 2^{\alpha}3^{\beta}$ for all $x \in G\setminus \Sol(G)$, where $\alpha$ and $\beta$ are non-zero integers. Then $|G/\Sol(G)| = 2^{\alpha}3^{\beta}$. Therefore,  $G/\Sol(G)$ is solvable and so $G$ is solvable; a contradiction. This proves the existence of an element $x \in G\setminus \Sol(G)$ such that $o(x)$ is a prime greater or equal to $5$.
%
By Lemma \ref{order-lem}, we have an element $x \in G\setminus \Sol(G)$ such that $o(x)$ is a prime greater  or equal to $5$.
%and a  prime $p \geq 5$ such that $p$ divides $o(x)$.   
Clearly, $\nbd(x)\neq \emptyset$. Assume that $o(y) = 2$ for all 
%Suppose for a contradiction that for all $y \in \nbd(x)$, $\circ(y)=2$. Now if 
 $y\in \nbd(x)$. Then $xy \in \nbd(x)$ and so $o(xy)=2$. 
Thus $\langle x, y\rangle = \langle y, xy\rangle$  is isomorphic to a dihedral group,
% of order $2o(x)$, 
 which is  a contradiction. Therefore, there exist $y \in \nbd(x)$  such that $o(y)\geq 3$. Let $1 \ne j \in \mathbb{N}$  and $\gcd(j, o(x)) = 1$.
% Then $\langle x, y^j\rangle$ is not solvable.
Consider the subsets $H = \{x, x^2, x^3, x^4\}$,  $K = \{y^ix^j :   i = 1, 2, j= 0, 1, 2, 3, 4\}$ of $G\setminus \Sol(G)$ and the induced graph  $\mathcal{NS}_G[H\cup K]$. Notice that    $\mathcal{NS}_G[H\cup K]$ has a subgraph isomorphic to $K_{4,10}$ and hence   
\[
\gamma(\mathcal{NS}_G) \geq \gamma(\mathcal{NS}_G[H\cup K]) \geq \gamma(K_{4,10}) = 4.
\]
 This completes the proof. 
\end{proof}

\begin{rem}\label{gengr12}
 By GAP \cite{gap}, using the following program, we see that $\mathcal{NS}_{A_5}$ has 1140 edges and 59 vertices. Thus by \cite[Corollary 6-14]{whi}, we have $\gamma(\mathcal{NS}_{A_5})\geq \frac{1140}{6}-\frac{59}{2}+1=161.5$ and so $\gamma(\mathcal{NS}_{A_5}) \geq 162$.

\begin{verbatim}
LoadPackage("GRAPE");
g:=AlternatingGroup(5);
solg:=RadicalGroup(g);
h:=Graph(g,Difference(g,solg),OnPoints,function(x,y) return 
IsSolvable(Subgroup(g,[x,y]))= false; end, true);
k:=Vertices(h);
i:=0;
for x in k do
     i:=i+VertexDegree(h,x);
od;
Print("Number of Edges=",i/2);
\end{verbatim}

Similarly $\mathcal{NS}_{S_5}, \mathcal{NS}_{SL(2,5)}$ and $\mathbb{Z}_2\times A_5$ has 4560 edges and 119 vertices. So their genera are at least 732.
\end{rem}

A compact surface $N_k$ is a connected sum of $k$ projective planes. The number $k$ is called the \textit{crosscap} of $N_k$. A simple graph which can be embedded in $N_k$ but not in $N_{k-1}$, is called a graph of crosscap $k$. The notation $\bar{\gamma}(\Gamma)$ stand for the crosscap of a graph $\Gamma$. It is easy to see that $\bar{\gamma}(\Gamma_0)\leq \bar{\gamma}(\Gamma)$ for all subgraphs $\Gamma_0$ of $\Gamma$. Also, a graph $\Gamma$, such that $\bar{\gamma}(\Gamma)=1$ is called a \textit{projective} graph.
%There is no  formula  similar Lemma \ref{lem1} for crosscap number of a graph in terms of its blocks crosscap numbers, 
It is shown in \cite{ghw} that $2K_5$ is not projective. Hence, any graph containing a subgraph isomorphic to $2K_5$ is not projective. We conclude this paper with the following result.

\begin{thm}
 Let $G$ be a finite non-solvable group. Then $\mathcal{NS}_G$ is not projective.
\end{thm}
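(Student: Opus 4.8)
The plan is to find, inside $\mathcal{NS}_G$, a subgraph isomorphic to a disjoint union of two copies of $K_5$, i.e.\ $2K_5$, and then invoke the result of \cite{ghw} that $2K_5$ is not projective. Since the crosscap is monotone under taking subgraphs, $\bar{\gamma}(\mathcal{NS}_G) \geq \bar{\gamma}(2K_5) > 1$, so $\mathcal{NS}_G$ cannot be projective.

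First I would use Lemma \ref{order-lem} to produce an element $x \in G\setminus\Sol(G)$ whose order $p$ is a prime with $p \geq 5$. The powers $x, x^2, x^3, x^4$ together with $x$ itself generate the cyclic group $\langle x\rangle$ of order $p$, which is abelian and hence solvable, so these powers are pairwise non-adjacent and by themselves give no clique. To build a $K_5$ I would instead pick a neighbour $y$ of $x$, so that $\langle x, y\rangle$ is non-solvable, and consider a set of the form $\{x, y, xy, x^2y, x^3y\}$ or similar, arguing as in the proof of Proposition \ref{geq6} that any two of these generate $\langle x, y\rangle$ (up to finite-index considerations), hence a non-solvable group, making them pairwise adjacent. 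That yields one copy of $K_5$.

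The main obstacle will be producing a \emph{second}, vertex-disjoint copy of $K_5$. Here I would exploit the cosets of $\Sol(G)$ via Lemma \ref{sol}: if $u,v$ are adjacent then $uz, vw$ are adjacent for all $z,w\in\Sol(G)$, so each vertex of the first clique can be translated by a fixed nontrivial element of $\Sol(G)$ to give a disjoint clique—\emph{provided} $|\Sol(G)| \geq 2$. When $\Sol(G)$ is trivial this translation trick fails, and I would instead split into cases on $|G/\Sol(G)|$. By Proposition \ref{geq6} the clique number is at least $6$, and in fact the GAP computations and Theorem \ref{degree-main-Th} suggest $\omega(\mathcal{NS}_G)\geq 8$; a clique on $8$ or more vertices contains two disjoint copies of $K_4$, but I actually need two disjoint $K_5$'s, so the cleanest route is to show $\omega(\mathcal{NS}_G)\geq 10$ in the trivial-radical case and extract $K_5 \sqcup K_5$ from inside a single large clique, or to find two genuinely independent non-solvable two-generated subgroups.

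Concretely, I would argue that one can choose two adjacent pairs whose combined vertex sets are disjoint: take the prime-order element $x$ and a neighbour $y$ giving the first $K_5$, then use Theorem \ref{thomson} and the structure of $G/\Sol(G)$ (which is non-solvable and therefore contains a copy of a minimal simple group, forcing many elements of order $\geq 5$) to locate a second non-solvable two-generated subgroup on five fresh vertices. Closing the argument amounts to checking that the ten chosen elements are distinct and that the two five-element blocks each form a clique while living in disjoint parts of the graph; once $2K_5 \hookrightarrow \mathcal{NS}_G$ is established, the non-projectivity is immediate from \cite{ghw}.
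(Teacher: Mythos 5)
Your high-level plan --- embed $2K_5$ in $\mathcal{NS}_G$ and invoke \cite{ghw} --- is exactly the paper's, and your translation trick does dispose of the case $|\Sol(G)|\ge 2$: if $C$ is a $5$-clique and $1\ne z\in\Sol(G)$, then $Cz$ is a clique by Lemma \ref{sol}, and $C\cap Cz=\emptyset$ because, by Lemma \ref{solo}, two distinct vertices lying in the same coset of $\Sol(G)$ are never adjacent, so the vertices of any clique lie in pairwise distinct cosets. The genuine gap is the case $\Sol(G)=\{1\}$, which is the main case (it includes every finite non-abelian simple group). Your primary route there --- prove $\omega(\mathcal{NS}_G)\ge 10$ and split one big clique --- cannot work: the paper's GAP computation gives $\omega(\mathcal{NS}_{A_5})=8$, and $\Sol(A_5)$ is trivial, so for $G=A_5$ there is no clique on ten vertices at all. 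Your fallback (``two genuinely independent non-solvable two-generated subgroups'') is never made precise or carried out; Theorem \ref{thomson} only supplies a common neighbour of two given vertices, not a second $K_5$ on fresh vertices.

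What manufactures the apparent difficulty is a misconception: you insist that the two $5$-blocks live ``in disjoint parts of the graph,'' i.e.\ with no edges between them. But $2K_5$ only needs to occur as a subgraph, not an induced subgraph, and $\bar{\gamma}$ is monotone under arbitrary subgraphs (as the paper notes), so two \emph{vertex-disjoint} $5$-cliques suffice; any cross edges are simply discarded. Once this is observed, no case split on $|\Sol(G)|$ is needed and the paper's uniform construction closes your open case: take $x$ of prime order $p\ge 5$ (Lemma \ref{order-lem}) and a neighbour $y$ with $o(y)\ge 3$ (possible, since if every $y\in\nbd(x)$ had order $2$ then $xy\in\nbd(x)$ would too, and $\langle x,y\rangle=\langle y,xy\rangle$ would be dihedral, hence solvable); choose $j$ with $\gcd\left(j,o(y)\right)=1$ and $y^j\ne y$, and set $H=\{y,xy,x^2y,x^3y,x^4y\}$, $K=\{y^j,xy^j,x^2y^j,x^3y^j,x^4y^j\}$. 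Any two distinct elements $x^iy^{\varepsilon},x^ky^{\varepsilon}$ of one block generate a subgroup containing $x^{i-k}\ne 1$, hence $x$ (as $p$ is prime), hence $y^{\varepsilon}$, hence all of $\langle x,y\rangle$ (using $\langle y^j\rangle=\langle y\rangle$); so each block is a clique. Moreover $H\cap K=\emptyset$: an equality $x^iy=x^ky^j$ with $i=k$ forces $y^j=y$, while $i\ne k$ puts a nontrivial element in $\langle x\rangle\cap\langle y\rangle$, making $x\in\langle y\rangle$ and $\langle x,y\rangle$ cyclic, hence solvable --- a contradiction. Thus $\mathcal{NS}_G$ contains $2K_5$ and \cite{ghw} finishes, with no hypothesis on $|\Sol(G)|$ and no clique-number bound.
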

\begin{proof}
As shown in  the proof of Theorem \ref{nopla}, there exist $x, y \in G\setminus \Sol(G)$  such that 
$o(x)$ is a prime greater or equal to $5$, $o(y) \geq 3$ and they are adjacent.
%$x$ and $y$ are adjacent, $o(x) \geq 5$, $o(y) \geq 3$ and $o(x)$ is a prime.
 Let $1 \ne j \in \mathbb{N},  \gcd(j, o(y))=1$. Consider the subsets $H = \{y, xy, x^2y, x^3y, x^4y\}$ and $K = \{y^j, xy^j,$ $x^2y^j, x^3y^j, x^4y^j\}$ of $G\setminus \Sol(G)$. Then $H\cap K = \emptyset$ and $\mathcal{NS}_G[H]\cong \mathcal{NS}_G[K]\cong K_5$. It follows that $\mathcal{NS}_G$ has a subgraph isomorphic to  $2K_5$. Hence, $\mathcal{NS}_G$ is not projective.
\end{proof}


\begin{thebibliography}{33}

\bibitem{Ab06}
A. Abdollahi,   S. Akbari  and H. R. Maimani, {\em Non-commuting graph of a group},  J. Algebra, {\bf 298} (2006),  468--492.

\bibitem{Ab10}
A. Abdollahi, M. Zarrin, {\em Non-nilpotent graph of a group}, Comm. Algebra, {\bf 38} (2010), 4390-4403.

\bibitem{Af15} 
M. Afkhami, D. G. M. Farrokhi and K. Khashyarmanesh, {\em Planar, toroidal, and projective commuting and noncommuting graphs}, Comm. Algebra, {\bf 43} (2015), 2964-2970.


\bibitem{akbari}
B. Akbari, {\em More on the non-solvable graphs and solvabilizers}, preprint available at https://arxiv.org/pdf/1806.01012.pdf. 

%\bibitem{bhky}
%J. Battle, F. Harary, Y. Kodama and  J. W. T. Youngs, {\em Additivity of the genus of a graph}, Bull. Amer. Math. Soc., \textbf{68}  (1962), 565--568.


\bibitem{bnn19}
P. Bhowal, D. Nongsiang and R. K. Nath, {\em A note on solvable graphs of finite groups}, preprint, available at https://arxiv.org/pdf/1903.01755.pdf.


\bibitem{bon}
 J. A. Bondy, J. S. R. Murty, {\em Graph Theory with Applications}, North-Holland, 1976.
 
\bibitem{Dar14}
 M. R. Darafsheh, H. Bigdely, A. Bahrami and M. D. Monfared, {\em Some results on non-commuting graph of a finite group}, Ital. J. Pure Appl. Math., {\bf 268} (2014), 371-387.

\bibitem{ddn18}
P. Dutta, J. Dutta and R. K. Nath, {\em Laplacian spectrum of non-commuting graphs of finite groups},  Indian J. Pure Appl. Math., {\bf 49} (2018), 205-216.



\bibitem{gkps}
 R. Guralnick, B. Kunyavskii, E. Plotkin and A. Shalev, {\em Thompson-like characterizations of the solvable radical}, J. Algebra, \textbf{300} (2006), 363--375.

\bibitem{ghw}
H. H. Glover, J. P. Huneke and C. S. Wang, {\em 103 graphs that are irreducible for the projective plane}, J. Combinatorial Series B, \textbf{27} (1978), 332--370.

\bibitem{gW2000}
R. Guralnick and J. Wilson, {\em The probability of generating a finite soluble group}, Proc. London Math. Soc.  {\bf 81}(3) (2000), 405--427.


\bibitem{hr} 
D. Hai-Reuven, {\em Non-solvable graph of a finite group and solvabilizers}, arXiv:1307.2924v1, 2013.





%\bibitem{neu}
% B.H. Neumann, {\em A problem of Paul Erd\"{o}s on groups}, Journal of the Australian Mathematical Society (Series A),   \textbf{21}(4)  (1976), 467--472.

\bibitem{ns17}
D. Nongsiang and P. K. Saikia, {\em  On the non-nilpotent graphs of a group}, Int. Electron. J. Algebra, \textbf{22} (2017), 78--96. 

\bibitem{ns2017}
D. Nongsiang and P. K. Saikia, {\em On the non-nilpotent graphs of a group}, Int. Electron. J. Algebra, {\bf 22} (2017), 78-96.
 
\bibitem{talebi}
A. A. Talebi, {\em On the non-commuting graphs of group $D_{2n}$}, Int. J. Algebra, {\bf 20} (2008), 957-961.


%\bibitem{ol's} 
%A. Yu. Ol'shanskii, {\em Geometry of Defining Relations in Groups}, Kluwer Academic Pulishers Group, Amsterdam, 1991.


%\bibitem{rob}
%  D. J. S. Robinson, {\em A Course in the Theory of Groups}, Graduate text in Mathematics, 80, Springer--Verlag, New York-Berlin, 1982.
 
%\bibitem{rob1} 
%D. J. S. Robinson, {\em Finiteness Conditions and Generalized Soluble Groups, Part 2}, Springer-Verlag, New York, 1972.

\bibitem{whi} 
A. T. White, {\em Graphs, Groups and Surfaces}, North-Holland Mathematics Studies, no. 8., American Elsevier Publishing Co., Inc.,  New York, 1973.

\bibitem{wIL2008}
J. S. Wilson, {\em The probability of generating a soluble subgroup of a finite group}, J. London Math. Soc. (2) {\bf 75} (2007), 431-446.


\bibitem{gap} The GAP~Group, \emph{GAP -- Groups, Algorithms, and Programming, Version 4.6.4}, 2013
  \verb+(http://www.gap-system.org)+.

\end{thebibliography}
\end{document}